\newtheorem{theorem}{Theorem}[section]
\newtheorem{asu}{Assumption}[section]
\def\D{\mathcal{D}}
\def\x{\mathbf{x}}
\def\y{\mathbf{y}}
\def\z{\mathbf{z}}
\def\u{\mathbf{u}}
\def\v{\mathbf{v}}
\def\c{\mathbf{c}}
\def\Y{\mathcal{Y}}
\def\S{\mathcal{S}}
\def\X{\mathcal{X}}
\def\T{\mathcal{T}}
\def\D{\mathcal{D}}
\definecolor{mygreen}{HTML}{008B47}
\definecolor{myred}{HTML}{c00000}
\definecolor{mypurple}{HTML}{7030A0}
\begin{document}
	%
	% paper title
	% Titles are generally capitalized except for words such as a, an, and, as,
	% at, but, by, for, in, nor, of, on, or, the, to and up, which are usually
	% not capitalized unless they are the first or last word of the title.
	% Linebreaks \\ can be used within to get better formatting as desired.
	% Do not put math or special symbols in the title.
\title{Augmenting Iterative Trajectory for Bilevel Optimization: Methodology, Analysis and Extensions}

\author{Risheng Liu,~\IEEEmembership{Member,~IEEE,}
	Yaohua Liu,
	Shangzhi Zeng, Jin Zhang% <-this % stops a space
		\IEEEcompsocitemizethanks{\IEEEcompsocthanksitem R. Liu is with the School of Software, Dalian University of Technology, Dalian, Liaoning, P.R., China. E-mail: rsliu@dlut.edu.cn. \protect 
		\IEEEcompsocthanksitem Y. Liu is with the School of Computing and Data Science, University of Hong Kong, Hong Kong SAR, China.  E-mail:  liuyaohua\_918@163.com. \protect 
		\IEEEcompsocthanksitem S. Zeng is with the National Center for Applied Mathematics, and Department of Mathematics, Southern University of Science and Technology, Shenzhen, Guangdong, P.R., China. E-mail: zengsz@sustech.edu.cn. \protect
		\IEEEcompsocthanksitem J. Zhang is with the Department of Mathematics, Southern University of Science and Technology, and National Center for Applied Mathematics, Shenzhen, Guangdong, P.R., China. (Corresponding author, E-mail: zhangj9@sustech.edu.cn.) \protect
	}}
%	\thanks{Manuscript received Mar 26, 2023; revised Oct 08, 2025.}}

% The paper headers
\markboth{Journal of \LaTeX\ Class Files,~Vol.~14, No.~8, August~2015}%
{Shell \MakeLowercase{\textit{et al.}}: Bare Demo of IEEEtran.cls for Computer Society Journals}

\IEEEtitleabstractindextext{%
	\begin{abstract}
		
In recent years, there has been a surge of machine learning applications developed with hierarchical structure, which can be approached from Bi-Level Optimization (BLO) perspective. However, most existing gradient-based methods overlook the interdependence between hyper-gradient calculation and Lower-Level (LL) iterative trajectory, focusing solely on the former. Consequently, convergence theory is constructed with restrictive LL assumptions, which are often challenging to satisfy in real-world scenarios. In this work, we thoroughly analyze the constructed iterative trajectory, and highlight two deficiencies, including empirically chosen initialization and default use of entire trajectory for hyper-gradient calculation. To address these issues, we introduce two augmentation techniques including Initialization Auxiliary (IA) and Pessimistic Trajectory Truncation (PTT), and investigate various extension strategies such as prior regularization, different iterative mapping schemes and acceleration dynamics to construct Augmented Iterative Trajectory (AIT) for corresponding BLO scenarios (e.g., LL convexity and LL non-convexity). Theoretically, we provide convergence analysis for AIT and its variations under different LL assumptions, and establish the convergence analysis for BLOs with non-convex LL subproblem. Finally, we demonstrate the effectiveness of AIT through three numerical examples, typical learning and vision applications (e.g., data hyper-cleaning and few-shot learning) and more challenging tasks such as neural architecture search.
		
	\end{abstract}
	
	% Note that keywords are not normally used for peerreview papers.
	\begin{IEEEkeywords}
		Bilevel optimization, gradient-based method, initialization auxiliary, pessimistic trajectory truncation, deep learning.
\end{IEEEkeywords}}

% make the title area
\maketitle

% To allow for easy dual compilation without having to reenter the
% abstract/keywords data, the \IEEEtitleabstractindextext text will
% not be used in maketitle, but will appear (i.e., to be "transported")
% here as \IEEEdisplaynontitleabstractindextext when the compsoc 
% or transmag modes are not selected <OR> if conference mode is selected 
% - because all conference papers position the abstract like regular
% papers do.
\IEEEdisplaynontitleabstractindextext
% \IEEEdisplaynontitleabstractindextext has no effect when using
% compsoc or transmag under a non-conference mode.

% For peer review papers, you can put extra information on the cover
% page as needed:
% \ifCLASSOPTIONpeerreview
% \begin{center} \bfseries EDICS Category: 3-BBND \end{center}
% \fi
%
% For peerreview papers, this IEEEtran command inserts a page break and
% creates the second title. It will be ignored for other modes.
\IEEEpeerreviewmaketitle

\IEEEraisesectionheading{\section{Introduction}\label{sec:introduction}}
% Computer Society journal (but not conference!) papers do something unusual
% with the very first section heading (almost always called "Introduction").
% They place it ABOVE the main text! IEEEtran.cls does not automatically do
% this for you, but you can achieve this effect with the provided
% \IEEEraisesectionheading{} command. Note the need to keep any \label that
% is to refer to the section immediately after \section in the above as
% \IEEEraisesectionheading puts \section within a raised box.

% The very first letter is a 2 line initial drop letter followed
% by the rest of the first word in caps (small caps for compsoc).
% 
% form to use if the first word consists of a single letter:
% \IEEEPARstart{A}{demo} file is ....
% 
% form to use if you need the single drop letter followed by
% normal text (unknown if ever used by the IEEE):
% \IEEEPARstart{A}{}demo file is ....
% 
% Some journals put the first two words in caps:
% \IEEEPARstart{T}{his demo} file is ....
% 
% Here we have the typical use of a "T" for an initial drop letter
% and "HIS" in caps to complete the first word.

\IEEEPARstart{R}{ecently}, a variety of emerging deep learning applications have adopted hierarchical optimization models, including Hyperparameter Optimization~\cite{liu2021investigating,shaban2019truncated}, Meta Learning~\cite{nichol2018first,finn2017model}, Neural Architecture Search~\cite{wu2019fbnet,hu2020tf}, and Reinforcement Learning~\cite{liu2018darts,zhang2020bi,wang2020global}. These applications have achieved state-of-the-art performance across diverse domains, reflecting the effectiveness of hierarchical modeling. With proper reformulation, they can all be cast as Bi-Level Optimization (BLO) problems, which optimize two nested subproblems in the form:
\begin{equation}\label{blo_problem}
	\min _{\mathbf{x} \in \mathcal{X}} F(\mathbf{x}, \mathbf{y}), \text { s.t. } \mathbf{y} \in \mathcal{S}(\mathbf{x}):=\arg \min _{ \mathbf{y} \in \mathcal{Y}} f(\mathbf{x}, \mathbf{y}),
\end{equation}
where $\x \in\mathbb{R}^{m}$ and $ \y \in \mathbb{R}^n$ correspond to the Upper-Level (UL) and Lower-Level (LL) variables respectively, the UL objective $F$ and LL objective $f$ are continuously differentiable functions and  $\mathcal{S}(\mathbf{x})$ is the LL solution set for all $\x\in\mathcal{X}$. 

In general, the BLO problem can be considered as a leader-follower game process, where the follower ( i.e., the LL subproblem) continuously responds to the decision $\x$ of the Leader (i.e., the UL subproblem). From the optimistic BLO perspective, we assume that the obtained LL solution $\y\in\mathcal{S}(\x)$ always leads to the optimal solution of UL objective for any given $\x$, then the original BLO in Eq.\eqref{blo_problem} can be reduced to a simple single-level problem as 
\begin{equation}\label{single_reform}
	\min_{\x \in \X} \varphi(\x):= \inf_{\y\in\mathcal{S}(\x)} F(\x,\y),
\end{equation} 
where $\varphi$ is the value function of a simple bilevel problem w.r.t. $\y$ for any given $\x$. Then $\x$ and $\y$ have the cooperative relationship towards minimizing the UL subproblem. Based on this form, various branches of methods~\cite{moore2010bilevel,rajeswaran2019meta,liu2020generic,ye2021difference} have explored how to optimize $\varphi(\x)$ with respective to $\x$ and $\y$ simultaneously, thus solve the above applications captured by this hierarchical structure with high accuracy and efficiency.

Typically, BLO problems remain challenging to be solved caused by the nature of sophisticated dependence between UL and LL variables, especially when multiple LL solutions exist in $\mathcal{S}(\x)$. As for classical theory, KKT condition~\cite{zemkoho2020theoretical} has been considered as an efficient tool to solve the above optimization problems. whereas, these types of methods are impractical to be applied in modern machine learning tasks limited by existence of too many multipliers. Meanwhile, Gradient-Based Methods (GBMs)~\cite{franceschi2017forward} have been widely adopted in various learning and vision tasks to handle real-world applications. Practically speaking, the key of solving Eq.~\eqref{single_reform} with GBMs falls to accessing the highly coupled nested gradient caused by iterative optimization to solve the LL subproblem in Eq.~\eqref{blo_problem}.  We denote $\y^{*}(\x)$ as the Best Response mapping of the follower for a given UL variable $\x$. By embedding $\y^{*}(\x)$ into the simple bilevel problem in Eq.~\eqref{single_reform} as $F\left(\x, \y^*(\x)\right)$, the hyper-gradient of $\varphi(\x)$ can be derived by using the chain rule as

\begin{equation}\label{nest_gradient}
	\nabla_{\x}\varphi(\mathbf{x})=\nabla_{\x}F\left(\mathbf{x}, \mathbf{y}^*(\mathbf{x})\right)+G(\mathbf{x}),
\end{equation}
where $\nabla_{\x}F\left(\mathbf{x}, \mathbf{y}^*(\mathbf{x})\right)$ can be regarded as the direct gradient, and $G(\x)=\left(\nabla_{\x} \mathbf{y}^*(\mathbf{x})\right)^{\top} \nabla_{\y} F\left(\mathbf{x}, \mathbf{y}^*(\mathbf{x})\right)$ denotes the indirect gradient. 

Since the calculation of hyper-gradient requires proper approximation of $\y^*(\x)$ and indirect gradient $G(\x)$, classical dynamics-embedded GBMs~\cite{franceschi2017forward}  tend to construct the approximations by introducing dynamical system obtained from iterative optimization steps. Typically speaking, these types of gradient-based BLO methods approximate $\y^*(\x)$ with $\y_{K}(\x)$ constructed by the following optimization dynamics: $
	\y_{k+1}(\x)=\mathcal{T}_{k+1}\left(\mathbf{x}, \mathbf{y}_{k}(\x)\right),\ k=0,\cdots,K-1,$
where $\y_0(\x)=\y_0$ is a fixed given initial value and $\mathcal{T}_{k+1}(\x,\cdot)$ denotes the iterative dynamics mapping at $k$-th step derived with specific updating scheme. Based on the above formulation, we use $\mathcal{T}_{0:K}$ to denote the historical sequence of $\y_{k}(\x)$, $k=0,...,K$, which will be discussed more thoroughly in this work. In essence, $\mathcal{T}_{0:K}$ reflects the iterative trajectory of LL variables to obtain $\y_{K}(\x)$, then the indirect gradient $G(\x)$ can be approximated with this $\y_{K}(\x)$ by 
$\left(\nabla_{\x} \mathbf{y}_{K}(\mathbf{x})\right)^{\top} \nabla_{\y} F\left(\mathbf{x}, \mathbf{y}_{K}(\mathbf{x})\right)$. By this way, these GBMs can be regarded as using gradient descent methods solving following approximation problem of Eq.~\eqref{single_reform}, 
\begin{equation}\label{single_re_reform}
	\min_{\x \in \X} \varphi_{K}(\x):= F(\x,\y_{K}(\x)).
\end{equation} 
It is worth noting that $\mathbf{y}_{K}(\mathbf{x})$ can be regarded as good approximation as long as $\nabla_{\mathbf{y}} f\left(\mathbf{x}, \mathbf{y}_{K}(\mathbf{x})\right)$ uniformly converges to zero w.r.t. $\x\in\mathcal{X}$ and $F(\x,\y_{K}(\x))$ converges to the real BLO objective $\varphi(\x)$. %in which $\nabla_{\x} \mathbf{y}_{K}(\mathbf{x}) $ is usually computed by Automatic Differentiation (AD).  

To ensure the obtained approximation in Eq.~\eqref{single_re_reform}  is good enough to guarantee the convergence of solutions towards the original BLO, restrictive assumptions of LL subproblem, such as LL Singleton (LLS) and LL Convexity (LLC), are usually required by existing GBMs. Whereas, as complex network structure with multiple convolutional blocks and loss functions with specialized regularization terms are widely employed in lots of applications, the LL subproblems always have non-convex property, which implies a gap between the provided theoretical guarantee and complex properties of real world problems.  To deal with these challenges, several works~\cite{liu2020generic,liu2021value} have spared efforts to design new iterative mapping schemes or approximation theory, thus partly relax the theoretical constraints and obtain some new convergence results. Nevertheless, there is still an urgent need for general BLO framework with solid theoretical analysis to cover the non-convex scenarios.    

\subsection{Contributions}

In this work, we propose the Augmented Iterative Trajectory (AIT) framework with two augmentation techniques, including Initialization Auxiliary (IA) and Pessimistic Trajectory Truncation (PTT), and a series of extension strategies to handle the above limitations of existing GBMs. More specifically, we first analyze the vanilla iterative trajectory constructed by the gradient-based BLO scheme,  and highlight two significant deficiencies that have been largely overlooked: (i) The initialization of the iterative trajectory is typically chosen empirically. (ii) The entire trajectory is often used by default for hyper-gradient calculation.

Aiming at the above deficiencies of iterative trajectory, we first introduce IA to optimize the initialization value for LL trajectory simultaneously with the UL variables. We further enhance IA with a proximal prior regularization strategy as a warm start, and explore different iterative dynamic mapping schemes and acceleration dynamics to construct $\textrm{AIT}_{C}$ for BLOs with convex LL subproblems. To handle the challenging non-convex scenario, we further propose PTT operation to dynamically adjust the iterative trajectory for hyper-gradient calculation, and additionally incorporate accelerated gradient schemes to form $\textrm{AIT}_{NC}$ for non-convex BLOs. Theoretically, we conduct detailed convergence analysis of $\textrm{AIT}_{C}$, $\textrm{AIT}_{NC}$ and its variations to support different challenging scenarios, especially for the unexplored non-convex scenario. Finally, we validate AIT and its extension schemes through extensive numerical experiments and evaluations on diverse learning and vision tasks. We summarize our main contributions as follows.
\begin{itemize}
	
	\item We conduct a thorough analysis of the iterative trajectory construction for the gradient-based BLO scheme, which is a significant departure from the existing GBMs. We then identify two limitations of the vanilla iterative trajectory adopted by current GBMs, and introduce our AIT to address BLOs with more relaxed LL assumptions.
	
	\item We first propose IA to guide the optimization of LL variables, and then introduce prior regularization based on IA and explore various iterative dynamic mapping schemes (e.g., Nesterov's acceleration dynamics) to construct $\textrm{AIT}_{C}$ for BLOs with Convex followers. To handle the more challenging non-convex scenarios, we further propose PTT operation to dynamically truncate the trajectory, and investigate acceleration gradient scheme to construct $\textrm{AIT}_{NC}$.
	
	\item We provide detailed theoretical investigation on $\textrm{AIT}_{C}$, $\textrm{AIT}_{NC}$ and corresponding extension strategies (e.g., proximal prior regularization and acceleration gradient scheme) to ensure the convergence property of AIT without LLS even LLC assumption. Notably, we establish the first convergence guarantee for BLO problems with non-convex LL subproblems.
	
\item We conduct extensive numerical experiments under both convex and non-convex LL settings, including ablation studies to validate the proposed extension strategies. In addition, we demonstrate its broad applicability and consistent performance improvements on typical learning and vision applications (e.g., few-shot classification and data hyper-cleaning) and more challenging tasks including neural architecture search and generative adversarial networks.
	
\end{itemize}

\noindent \textbf{Relation to the Preliminary Version.} 
A preliminary version of this work appeared in the NeurIPS conference proceedings~\cite{liu2021towards}. This journal manuscript substantially extends the preliminary version by establishing a unified framework for AIT. \textbf{Methodologically}, we advance the specific techniques into a comprehensive framework and introduce novel extension strategies, including proximal prior regularization and accelerated gradient schemes, to further enhance convergence efficiency. \textbf{Theoretically}, we generalize the analysis via the  “Weak LL property with IA’’ and establish new convergence results characterizing the limiting behavior of stationary points (Theorems~\ref{thm: thm_station} and ~\ref{theorem4.7}), providing rigorous theoretical guarantees absent in the preliminary work. \textbf{Experimentally}, we validate the proposed framework on more challenging tasks, such as neural architecture search and generative adversarial networks.

\section{A Brief Review of Existing Works}\label{related_works}

In this work, we first provide a brief review of previous works to conclude the general gradient-based BLO scheme, and then we discuss the deficiencies of existing LL iterative trajectory, which have limited the application of existing GBMs to more challenging BLO scenarios.  

As mentioned before, existing GBMs have explored different heuristic techniques to calculate the hyper-gradient (i.e., $\nabla_{\x}\varphi(\mathbf{x})$) with designed approximation operations. In one of the most representative class of GBMs~\cite{maclaurin2015gradient,franceschi2018bilevel,liu2022general}, it first constructs dynamical system to track the optimization path of LL variables, and then explicitly calculates the indirect gradient with respect to UL variables based on ITerative Differentiation (ITD). Typically, RHG and FHG~\cite{maclaurin2015gradient,franceschi2018bilevel} are proposed, in which $\y^{*}(\x)$ and thus $\varphi(\x)$ are approximated by finite iterative optimization steps, and then the obtained approximated optimization problems are solved. Whereas, both methods suffer from the huge time and space complexity of calculating indirect gradient along the whole iterative trajectory with AD. Then Shanban et al.~\cite{shaban2019truncated} proposed T-RHG to manually truncate the backpropagation path, thus release the computation burden. For these methods, the iterative dynamics mapping $\mathcal{T}_{k+1}(\x,\cdot)$ used to construct iterative trajectory is specified as the projected gradient descent. Whereas, the convergence analysis of these methods all require the LL subproblem to be strongly convex, which severely restrict the application scenarios. Recently, Liu et al.~\cite{liu2022general} proposed BDA, which aggregates the gradient information of both UL and LL subproblems to construct new $\mathcal{T}_{k+1}(\x,\cdot)$ to approximate $\y^{*}(\x)$ and $\varphi(\x)$, thus successfully overcomes the LLS restriction issue.  When calculating the nested hyper-gradient with ITD based on different forms of $\mathcal{T}_{k+1}(\x,\cdot)$, the gradient-based BLO scheme could be summarized in Alg.~\ref{alg:innerloop}, where $\varphi_{K}(\x)= F(\x,\y_{K}(\x))$, and $\mathtt{Proj}(\cdot)$ denotes the projection operator. It can be observed that the iterative update of UL variables to solve the approximated subproblem (i.e., $\varphi(\x)$) relies on effective iterative trajectory for hyper-gradient calculation (Step~\ref{inner_loop1}-\ref{inner_loop3}).

\begin{algorithm}[h]
	\caption{Gradient-Based BLO Scheme}\label{alg:innerloop}
	\begin{algorithmic}[1]
		\REQUIRE UL iteration $T$ and LL iteration $K$.  
		\STATE Initialize $\x^0$.
		\FOR {$t=0 \rightarrow T-1$}
		\STATE Initialize $\y_0$.
		\FOR {$k=0 \rightarrow K-1$}\label{inner_loop1}
		\STATE \% Update $\y_k$ with $\x^t$.
		\STATE $\y_{k+1}(\x^t)= \mathcal{T}_{k+1}\left(\mathbf{x}^t, \mathbf{y}_{k}(\x^t)\right)$.\label{inner_loop3}
		\ENDFOR
		%\STATE 
		\STATE \% Update $\x^t$ with $\y_{K}(\x^t)$ based on ITD.
		\STATE $\x^{t+1}=\mathtt{Proj}_{\X}(\x^{t}-\alpha_{\x}\nabla_{\x}\varphi_{K}(\x^t) )$. \label{indirect_gradient}
		\ENDFOR
	\end{algorithmic}
\end{algorithm}

Since most ITD-based approaches only established their convergence theory under the LLS restriction, they neither pay attention to the initialization of the iterative trajectory nor carefully consider how to select the trajectory for hyper-gradient calculation. In particular, (i) an improper initialization $\y_{0}(\x)$ for the LL variables may slow down the convergence of both LL and UL subproblems. More importantly, when multiple solutions exist in $\mathcal{S}(\x)$ (i.e., without LLS), the initialization has a significant influence on the UL convergence behavior, and there is no guarantee that the trajectory constructed from the chosen $\y_{0}(\x)$ can converge to a solution in $\mathcal{S}(\x)$ while optimizing the UL objective.  (ii) Moreover, if the LLC assumption does not hold, the subsequences of the iterative trajectory $\mathcal{T}_{0:K}$ no longer enjoy the consistent convergence property as in the convex case. In this situation, using the entire trajectory for hyper-gradient calculation may lead to oscillation or even divergence of the UL optimization. Overall, the vanilla trajectory adopted by ITD-based approaches thus lacks flexibility and becomes the bottleneck of existing gradient-based BLO scheme to cover more challenging scenarios. Though several researches have explored potential techniques, such as aggregating LL and UL gradient information to design new $\mathcal{T}_{k}(\x,\cdot)$ to construct a good approximated BLO problem in Eq.~\eqref{single_re_reform} and successfully relax the theoretical restrictions on LL subproblem to some extent, the situation where the LLC assumption does not hold still needs to be considered.

In contrast to the ITD-based approaches which compute the hyper-gradient based on the constructed LL iterative trajectories, another branch of the methods~\cite{pedregosa2016hyperparameter,rajeswaran2019meta,lorraine2020optimizing} instead adopts the implicit function theorem and derives the hyper-gradient of UL variables by solving a linear system. Whereas, computing Hessian matrix and its inverse is much more expensive and challenging, particularly when ill-conditioned linear system appears. Besides, these methods based on Approximated Implicit Differentiation (AID) require the LL subproblem to be strongly convex, which limits the application scenarios to a great extent.  Another line of work~\cite{LiuLYZZ21} addresses the nonconvex issue by constructing value-function-based smooth approximation problems, while extra assumptions introduced on the constrained LL subproblems still remain cumbersome to handle. As for other recent efforts~\cite{ji2020provably,chen2022single} which have designed specific update formats of UL or LL variables to derive new convergence rate analysis, Hong et al.~\cite{hong2020two} proposed TTSA to introduce cheap estimates of gradients, and update the LL and UL variables simultaneously with SGD and projected SGD. In addition, Khanduri et al.~\cite{khanduri2021momentum} proposed the MSTSA algorithm to estimate the hyper-gradient with assisted momentum.  Quan et.al~\cite{xiao2022alternating} proposed two variants of the alternating implicit gradient SGD approach with improved efficiency to solve equality-constrained BLOs. The above methods also build their theoretical analysis for BLOs with strongly-convex LL subproblems.  

\begin{table}[hbtp]
	\centering
	\caption{We report the LL assumptions required by the mainstream GBMs to derive their convergence results, the technique to compute $\nabla_{\x}\varphi(\mathbf{x})$ and whether they are compatible with the accelerated technique (i.e., A.).  Note that w/ and w/o are abbreviations for with and without, respectively. } 
	\label{tab_conditions}
	\renewcommand\arraystretch{1.4}
	\setlength{\tabcolsep}{1.1mm}{
		\begin{tabular}{|c|c|c|c|c|}
			\hline
			Method & $\nabla_{\x}\varphi(\mathbf{x})$
			%& \textbf{LLS}
			& {w/o LLS} & {w/o LLC}& {w/ A. } \\			
			\hline
			RHG/FHG &\multirow{3}{*}{ITD}& \XSolidBrush& \XSolidBrush & \XSolidBrush\\
			
			T-RHG && \XSolidBrush &\XSolidBrush & \XSolidBrush \\
			BDA	&&\CheckmarkBold &\XSolidBrush & \XSolidBrush\\
			\hline
			CG&\multirow{2}{*}{AID} & \XSolidBrush& \XSolidBrush & \XSolidBrush\\
			Neumann&& \XSolidBrush& \XSolidBrush & \XSolidBrush\\
			\hline
			AIT	&ITD& \CheckmarkBold & \CheckmarkBold & \CheckmarkBold\\
			\hline
			
		\end{tabular}
	}
\end{table}

In Tab.~\ref{tab_conditions}, we compare the required LL assumptions of mainstream GBMs to build their convergence analysis and whether they support the acceleration technique. Based on the above analysis of gradient-based BLO scheme for ITD-based approaches, in the next section, we construct our AIT framework to address the theoretical issues and handle more challenging BLO scenarios. As it is shown, AIT not only derives similar convergence results without the LLS and LLC restriction, but also supports more flexible iterative mapping schemes including the acceleration techniques.

\section{Augmented Iterative Trajectory (AIT)}\label{IAPTT}

Based on the prior analysis of iterative trajectory, we aim to address two key deficiencies of existing gradient-based BLO schemes: their dependence on empirically preselected initializations $\y_0(\x)$ and their default reliance on the last iterate of the LL trajectory for hyper-gradient computation. To mitigate the first issue, we introduce the IA technique, which augments the trajectory by adding an auxiliary variable to the initialization and jointly optimizing it with the UL variables. We further extend IA with strategies such as prior regularization and acceleration dynamics to construct more efficient trajectories under the LLC assumption. To address the second issue, we propose the PTT technique, which dynamically adjusts the trajectory length to stabilize optimization in non-convex settings. Moreover, in the non-convex case, we also incorporate acceleration schemes into AIT to improve convergence efficiency. Together, IA and PTT, along with these extensions, form the foundation of the AIT framework for solving both convex and non-convex BLOs.

\subsection{ BLOs with Convex LL Subproblem}\label{IA}

Typically speaking, the initialization of iterative trajectory, i.e., $\y_{0}(\x)$, is usually chosen based on a fixed manually (randomly or by some specific strategies) set initial value of LL variable, which takes no consideration of the convergence behavior of UL subproblem. As it is mentioned before, when there exist multiple elements in LL subproblem solution set $\mathcal{S}(\mathbf{x})$, performing projected gradient descent steps (i.e., $\mathcal{T}_{k}(\x,\cdot)$ adopted by most GBMs)  to construct iterative trajectory with improper initial value $\y_0(\x)$ usually not necessarily optimizes the UL objective while converging to $\mathcal{S}(\x)$. Besides, an improper initialization position for $\mathcal{T}_{0:K}$ will also slow down the convergence rate of LL subproblem, even when $\mathcal{S}_{\x}$ is a singleton. 

To address the issues mentioned above, we suggest to introduce an Initialization Auxiliary (IA) variable $\z$ as the initialization position of LL variable to  construct $\mathcal{T}_{0:K}$ for approximating LL subproblem as follows,
\begin{equation}\label{yk_IA}
   	\begin{aligned}
   		&\y_0(\x,\z) = \z,   \quad\mathbf{y}_{k+1}(\x,\z)=\mathcal{T}_{k+1}\left(\mathbf{x}, \mathbf{y}_{k}(\x,\z)\right),
   	\end{aligned}
\end{equation} 
which leads to the following approximation problem of Eq.~\eqref{single_reform}: $
	\min_{\x \in \X, \z \in \Y} \varphi_{K}(\x,\z):= F(\x,\y_{K}(\x,\z)).$
It is worth noting that, the iterative trajectory constructed based on this optimization dynamics formulates the approximation $\y_K(\x,\z)$ parameterized by $\x$ and $\z$.  By this form, we avoid fixing initialization position chosen with empirical strategies, and update the initialization auxiliary $\z$ together with $\x$ to continuously search for the “best’’ initial points  of LL variables to calculate $\y_K(\x,\z)$. By taking into consideration of the convergence behavior of UL subproblem, $\mathcal{T}_{k}(\x,\cdot)$ with updated initial value can approach solutions to the BLO problem in Eq.~\eqref{blo_problem}. The idea of IA variable $\z$ is first proposed for the convergence theory~\cite{LiuLYZZ21} of non-convex first-order optimization methods. 

As mentioned above, the initialization auxiliary variable $\z$ is introduced and optimized simultaneously with UL variable $\x$ for helping the LL subproblem dynamics approach the appropriate $\y^*(\x)$ that minimizes UL objective $F$. To further guide the update of the IA variable $\z$ and thus boost the convergence of the LL subproblem dynamics, we propose to introduce a constraint set $\mathcal{Z}$ and an extra prior regularization term $g(\z)$ to the IA variable $\z$, which leads to the following approximation problem to Eq.~\eqref{single_reform}, 

\begin{equation}\label{IA_g_z}
	\min_{\x \in \X, \z \in \mathcal{Z}} \	\psi_{K}(\x,\z)=\varphi_{K}(\x,\z) + g(\z).
\end{equation}
In practice, $g(\z)$ can be specified as penalty on distance between $\z$ and the prior determined value $\z_p$, where $\z_p$ usually serves as some known state of $\z$ that is very close to the optimal LL variable. As one option for machine learning tasks, we could properly define $\z_p$ as the pretrained weights of the LL variables, and verify its effectiveness with BLO applications.

Another feature of this new approximation format is that the iterative dynamics mapping $\mathcal{T}_{k+1}(\x,\cdot)$ for constructing iterative trajectory can be flexibly specified as different forms proposed in existing works. Here, for general BLOs with convex LL subproblem, we suggest applying the aggregation scheme proposed in BDA~\cite{liu2022general} to construct $\mathcal{T}_{k+1}(\x,\cdot)$  as follows
\begin{equation}\label{eq:improved-lower}
	\begin{aligned}
		&\T_{k+1}\left(\x,\y_k(\mathbf{x},\z)\right) \\
		= &\mathtt{Proj}_{\Y} \left( \y_k(\mathbf{x},\z) -\left( \mu \alpha_k\mathbf{d}^{{F}}_k(\mathbf{x},\z)+(1-\mu)\beta_k\mathbf{d}^{{f}}_k(\mathbf{x},\z)\right) \right),
	\end{aligned}
\end{equation}
where $\mathbf{d}^{{F}}_k(\mathbf{x},\z) = s_u\nabla_{\mathbf{y}} F(\mathbf{x}, \mathbf{y}_{k}(\mathbf{x},\z))$ and $\mathbf{d}^{{f}}_k(\mathbf{x},\z) = s_l\nabla_{\mathbf{y}} f(\mathbf{x},\mathbf{y}_{k}(\mathbf{x},\z))$ 
denote the descent directions of $F$ and $f$ with the introduced IA, respectively. 
Here, $s_u, s_l > 0$ are the step size parameters, $\mu \in (0,1)$ and $\alpha_k, \beta_k \in (0,1]$ are the aggregation weights at $k$-th iteration.

Specially, for the case where the LLS condition is satisfied, following the vanilla update format in RHG, $\mathcal{T}_{k+1}\left(\mathbf{x}, \cdot \right)$ could be simply chosen as
\begin{equation}\label{yk_def}
	\begin{aligned}
		\mathcal{T}_{k+1}\left(\mathbf{x}, \mathbf{y}_{k}(\x,\z)\right)= \mathtt{Proj}_{\Y} ( \y_k(\x,\z)- \alpha_{\y}^k\nabla_{\y}f(\x,\y_k(\x,\z)) ),
\end{aligned}
\end{equation}
where $\alpha_{\y}^k$ denotes the step size parameter. By this means, embedding the IA technique still helps optimize the initialization to construct the iterative trajectory, thus further improves the convergence behavior of BLOs under LLS assumption. 

Besides, our new scheme with IA technique also supports to consistently improve the existing methods with embedded acceleration dynamics. As one of the practical implementation, we could embed the Nesterov's acceleration methods~\cite{beck2009fast} with our IA technique, which has been widely used for convex optimization problem to accelerate the convergence of gradient descent, then we obtain the accelerated version as 
\begin{equation}\label{yk_def2}
	\begin{cases}
	\begin{aligned}
		&\y_0(\x,\z) = \z, \quad t_0 =1, \quad  t_{k+1} = {\left(1+\sqrt{1+4t_{k}^2} \right)}/{2},  \\
		& \u^{k+1}(\x,\z) = \y^{k+1}(\x,\z) + \left(\frac{t_{k}-1}{{t_{k+1}}}\right)(\y^{k+1}(\x,\z) - \y^{k}(\x,\z)),\\
		& \y_{k+1}(\x,\z) = \mathtt{Proj}_\Y \left( \u^{k}(\x,\z) - \alpha \nabla_{\y} f(\x,\u^{k}(\x,\z)) \right),\\
		& \T_{k+1}\left(\x,\y_k(\mathbf{x},\z)\right)  =  \y_{k+1}(\x,\z), \\
	\end{aligned}
\end{cases}
\end{equation}
where $\alpha$ denotes the step size parameter.  

\begin{algorithm}[h]
	\caption{The Proposed $\textrm{AIT}_{C}$ Algorithm}\label{alg:AIT-C}
	\begin{algorithmic}[1]
		\REQUIRE UL iteration $T$, LL iteration $K$.  
		\STATE Initialize $\x^0$ and $\z^0$.
		\FOR {$t=0 \rightarrow T-1$}
		\STATE $\y_0=\z^t$.
		\FOR {$k=0 \rightarrow K-1$}
		\STATE \% Iterative dynamics mapping.
		\STATE $\mathbf{y}_{k+1}(\x^t,\z^t)=\mathcal{T}_{k+1}\left(\mathbf{x}^t, \mathbf{y}_{k}(\x^t,\z^t)\right)$.
		\ENDFOR
		\STATE \% Update $\x$ with $\y_{K}(\x^t,\z^t)$
		\STATE $\x^{t+1}=\mathtt{Proj}_{\X}(\x^{t}-\alpha_{\x}\nabla_{\x}\psi_{K}(\x^t,\z^t))$.
		\STATE \% Update $\z$ with $\y_{K}(\x^t,\z^t)$.
		\STATE $\z^{t+1}=\mathtt{Proj}_{\mathcal{Z}}(\z^t-\alpha_{\z}\nabla_{\z}\psi_{K}(\x^t,\z^t))$.
		\ENDFOR
	\end{algorithmic}
\end{algorithm}

In Alg.~\ref{alg:AIT-C}, we illustrate the algorithm of our Augmented Iterative Trajectory for BLOs with Convex LL subproblem ($\textrm{AIT}_{C}$ for short), where $\psi_{K}(\x,\z)=F(\x,\y_{K}(\x,\z)) + g(\z)$. Whereas, to make the gradient-based BLO scheme be efficient and convergent for the BLOs with nonconvex LL subproblem, the proposed IA technique is not enough. In the next part, we will propose another augmentation technique named Pessimistic Trajectory Truncation, and discuss how to use this new technique together with IA to derive a new gradient-based BLO algorithm that is convergent and efficient for the BLOs with nonconvex LL subproblem. The detailed convergence analysis could be found in the next section.

\subsection{BLOs with Non-convex LL Subproblem}\label{PTT}

 A further limitation of existing gradient-based BLO schemes is their reliance on the last iterate of the LL optimization trajectory $\mathcal{T}_{0:K}$ for hyper-gradient computation. In contrast to the convex setting, when the LL subproblem is nonconvex, the trajectory generated by gradient-based methods may exhibit oscillatory behavior. Consequently, the last iterate $\y{K}(\x,\z)$, for example, the one generated by the proximal gradient method as in \eqref{yk_def}, does not necessarily converge uniformly to the LL solution set. Hence, using $F(\x,\y_{K}(\x,\z))$ as a surrogate for the true BLO objective $\varphi(\x)$ may lead to inaccurate approximations. This oscillatory behavior has been observed in classical convergence results for the proximal gradient method in the nonconvex setting~\cite[Theorem 10.15]{beck2017first}. To quantify optimality, one typically defines the proximal gradient residual mapping $\mathcal{R}_{\alpha}(\x,\y): = \y - \mathtt{Proj}_{\Y} \left(\y - \alpha \nabla_{\y}f(\x,\y) \right)$. For nonconvex LL problems, the residual at the last iterate $\y_{K}(\x,\z)$ may not converge uniformly to zero w.r.t. $(\x,\z)$. Instead, only the best iterate along the trajectory satisfies uniform convergence, i.e., $\min_{0 \le k \le K} \|  \mathcal{R}_{\alpha} (\x,\y_{k}(\x,\z)) \| $ uniformly converges to zero w.r.t. $(\x,\z)$. Therefore, directly substituting the last iterate $\y_{K}(\x,\z)$ into the UL objective  does not necessarily provide a reliable approximation strategy.
 	
 To address the challenges, we introduce a new strategy termed Pessimistic Trajectory Truncation (PTT). Building on the IA-based trajectory in Eq.~\eqref{yk_IA}, PTT does not rely solely on the last iterate. Instead, it adopts a pessimistic principle: rather than approximating the BLO objective using a single trajectory point, it reformulates the approximation problem by minimizing, with respect to $(\x,\z)$, the largest UL objective value attained along the trajectory. Specifically, we define $\phi_{{K}}(\x,\z):= \max_{1\le k \le K} \left\lbrace F(\x,\y_{k}(\x,\z)) \right\rbrace = F(\x,\y_{\bar{K}}(\x,\z))$, where $\bar{K}=\arg\max_{1\le k \le K} \{ F(\x,\y_{k}(\x^t,\z^t))  \}$ and approximate the original BLO problem by $\min_{\x \in \X, \z \in \mathcal{Y} }  \phi_{{K}}(\x,\z) $. In practice, the index $\bar{K}$ adaptively truncates the trajectory according to UL objective values. Unlike the naive last-iterate strategy, PTT, especially when combined with IA, yields an approximation problem that more reliably captures the behavior of the original BLO.

Theoretical justification for the surrogate problem $\min_{\x \in \X, \z \in \mathcal{Y} }~\phi_{{K}}(\x,\z) $ is provided in Section~\ref{sec42}. Our analysis leverages a key property of many gradient-type methods for nonconvex optimization, as $K \to \infty$, $\min_{0 \le k \le K} \|  \mathcal{R}_{\alpha} (\x,\y_{k}(\x,\z)) \| $ uniformly converges to zero w.r.t. $(\x,\z)$. This property, satisfied for instance by the proximal gradient method, motivates the PTT design. Although explicitly selecting $i(K) := \arg\min_{0 \le k \le K} \|  \mathcal{R}_{\alpha} (\x,\y_{k}(\x,\z)) \| $ would be impractical, PTT effectively incorporates this “best iterate” principle by minimizing over the worst-case UL objective along the trajectory. As a result, the surrogate problem inherits the desired approximation guarantees, as established in Section~\ref{sec42}.

By combining IA and PTT, our algorithm avoids the pitfalls of using the oscillatory last iterate of the LL optimization in nonconvex LL settings, thereby removing the LLC assumption and broadening applicability to more realistic scenarios. Furthermore, the PTT operation typically identifies a relatively small index $\bar{K}$, which shortens the effective trajectory length used in hyper-gradient backpropagation. This leads to reduced computational cost in practice, as corroborated by our running-time analysis on non-convex numerical examples.

Finally, our proposed algorithm is flexible with respect to the choice of iterative mapping $\mathcal{T}_{k+1}(\x,\y_{k}(\x,\z))$. For instance, one may adopt the projected gradient method in Eq.~\eqref{yk_def}.or employ accelerated gradient schemes to improve efficiency in the nonconvex case~\cite{ghadimi2016accelerated}. For example, the accelerated scheme is given by
\begin{equation}\label{yk_def3}
	\begin{cases}
	\begin{aligned}
		& \u_{0}(\x,\z) = \v_{0}(\x,\z) = \z,   \\
		& \y_{k+1}(\x,\z) = (1-\alpha_k)\u_{k}(\x,\z) + \alpha_k \v_{k}(\x,\z), \\
		& \u^{k+1}(\x,\z) = \mathtt{Proj}_\Y \left( \y_{k+1}(\x,\z) - \beta_k \nabla_{\y} f(\x,\y_{k+1}(\x,\z)) \right),\\
		& \v_{k+1}(\x,\z) = \mathtt{Proj}_\Y \left( \v_{k}(\x,\z) - \lambda_k \nabla_{\y} f(\x,\y_{k+1}(\x,\z)) \right),\\
		& \T_{k+1}\left(\x,\y_k(\mathbf{x},\z)\right)  =  \y_{k+1}(\x,\z),
	\end{aligned}
\end{cases}
\end{equation}
where $\alpha_k = \frac{2}{k+1}$, $\beta_k = \frac{1}{2L_f}$ and $\lambda_k = \frac{k\beta_k}{2}$, and $k = 0,\ldots, K-1$. 

\begin{algorithm}[h]
	\caption{The Proposed $\textrm{AIT}_{NC}$ Algorithm}\label{alg:AIT-N}
	\begin{algorithmic}[1]
		\REQUIRE UL iteration $T$, LL iteration $K$.  
		\STATE Initialize $\x^0$ and $\z^0$.
		\FOR {$t=0 \rightarrow T-1$}
		\STATE $\y_0=\z^t$.
		\FOR {$k=0 \rightarrow K-1$}
		\STATE \%  Update $y_k$ with $\x^{t}$ and $\z^{t}$.
		\STATE $\mathbf{y}_{k+1}(\x^t,\z^t)=\mathcal{T}_{k+1}\left(\mathbf{x}^t, \mathbf{y}_{k}(\x^t,\z^t)\right)$.
		\ENDFOR
		\STATE \% Pessimistic Trajectory Truncation.%\phi_{k}(\x^t,\z^t)
		\STATE $\bar{K}=\arg\max_{1\le k \le K} \{ F(\x,\y_{k}(\x^t,\z^t))  \}$. \label{outer_loop_1}
		\STATE \% Update $\x$ with $\y_{\bar{K}}(\x^t,\z^t)$.
		\STATE  Denote $\phi_{\bar{K}}(\x,\z) = F(\x,\y_{\bar{K}}(\x,\z))$.
		\STATE $\x^{t+1}=\mathtt{Proj}_{\X}(\x^{t}-\alpha_{\x}\nabla_{\x}\phi_{\bar{K}}(\x^t,\z^t))$.\label{outer_loop_2}
		\STATE \% Update $\z$ with $\y_{\bar{K}}(\x^t,\z^t)$.
		\STATE $\z^{t+1}=\mathtt{Proj}_{\mathcal{Z}}(\z^t-\alpha_{\z}\nabla_{\z}\phi_{\bar{K}}(\x^t,\z^t))$.\label{outer_loop_3}
		\ENDFOR
	\end{algorithmic}
\end{algorithm}

With the two proposed augmentation techniques and the supported iterative acceleration scheme, the complete algorithm of our AIT for BLOs with non-convex LL subproblem ($\textrm{AIT}_{NC}$ for short) is illustrated in Alg.~\ref{alg:AIT-N}.  Beyond these formulations, we emphasize that the augmentation strategies in AIT provide a new way on trajectory construction. Specifically, IA and PTT serve as augmentation mechanisms for the two endpoints of the trajectory—initialization and truncation—thereby improving both the starting position and the termination behavior of the iterative trajectory. In parallel, the extension strategies, such as gradient aggregation, prior regularization, and acceleration, enrich the trajectory evolution itself by enabling more effective iterative mappings (i.e., $\mathcal{T}_{k}$). Collectively, these augmentations establish a unified and principled view of trajectory enhancement, which is crucial for achieving both theoretical generality and practical performance across diverse BLO tasks. In the following sections, we provide the convergence analysis of both $\textrm{AIT}_{C}$ and $\textrm{AIT}_{NC}$ and further discuss the applicable scenarios of different variations.

\section{Theoretical Analysis}\label{Theoretical_Analysis}
In this section, we present the asymptotic convergence analysis of $\textrm{AIT}_{C}$ in Alg. \ref{alg:AIT-C} and $\textrm{AIT}_{NC}$ in Alg. \ref{alg:AIT-N} under mild conditions on the general schematic iterative module $\mathcal{T}_k(\x,\cdot)$. And
we also show that these conditions on $\mathcal{T}_k(\x,\cdot)$ are satisfied for some specific choices of $\mathcal{T}_k(\x,\cdot)$. Specifically, for convex BLOs, we show the asymptotic convergence of Alg.~\ref{alg:AIT-C} when $\mathcal{T}_k(\x,\cdot)$ is chosen as BDA~\cite{liu2022general} in Eq.\eqref{eq:improved-lower}. And for convex BLOs whose LL solution set is singleton, we show the asymptotic convergence of Algorithm \ref{alg:AIT-C} when $\mathcal{T}_k(\x,\cdot)$ is chosen as projected gradient in Eq.~\eqref{yk_def} and Nesterov’s acceleration scheme in Eq.~\eqref{yk_def2}. For nonconvex BLOs, we show the asymptotic convergence of Algorithm \ref{alg:AIT-N} when $\mathcal{T}_k(\x,\cdot)$ is specifically chosen as projected gradient method and accelerated gradient scheme proposed by \cite{ghadimi2016accelerated} in Eq.~\eqref{yk_def3}.

\subsection{Convergence Analysis for $\textrm{AIT}_{C}$}

We first derive the asymptotic convergence analysis of Alg.~\ref{alg:AIT-C} for convex BLOs with general schematic iterative module $\mathcal{T}_k(\x,\cdot)$.  Alg.~\ref{alg:AIT-C} can be regarded as the application of projected gradient method on 
the following single-level approximation problem of Eq.~\eqref{blo_problem},

\begin{equation}\label{eq:upper_varphiK}
	\begin{aligned}
		&\min_{\x \in \X, \z \in \mathcal{Z} } ~~  F(\x,\y_K(\x,\z)) + g(\z) \\
		&s.t. ~ \begin{cases}
			 \y_0(\x,\z) = \z, &\\
		                  \y_{k+1}(\mathbf{x},\z) = \mathcal{T}_k(\x,\y_k(\mathbf{x},\z) ),&\end{cases} 
	\end{aligned}
\end{equation}
where $k = 0,\ldots,K-1$, the function $g : \mathbb{R}^m \rightarrow \mathbb{R}$ and the closed set $\mathcal{Z} \subset \mathbb{R}^m$ are the function and compact set that are with prior information on lower level variable $\y$. To show the asymptotic convergence of Alg.~\ref{alg:AIT-C}, i.e., the approximation problem in Eq.~\eqref{eq:upper_varphiK} converges to the original bilevel problem in Eq.~\eqref{blo_problem}, i.e., $\min_{\x \in \X} \varphi(\x)$, we make following mild assumptions on BLOs throughout this subsection.

\begin{asu}\label{assum:convex}We make following standing assumptions throughout this subsection:
	\begin{itemize}
		\item[(1)] $F, \nabla_\y F, f, \nabla_\y f$ are continuous functions on $\X\times\mathbb{R}^m$.
		\item[(2)] $F(\x,\cdot) : \mathbb{R}^m \rightarrow \mathbb{R}$ is $L_F$-smooth, convex and bounded below by $M_0$ for any $\x\in \X$.
		\item[(3)] $f(\x,\cdot) : \mathbb{R}^m \rightarrow \mathbb{R}$ is $L_f$-smooth and convex for any $\x\in \X$. $f(\x,\y)$ is level-bounded in $\y$ uniformly in $\x\in\X$
		\item[(4)] $\X$, $\Y$ and $ \mathcal{Z}$ are compact sets, and $\Y$ is a convex set.
		\item[(5)] $\hat{\S}(\x) := \mathrm{argmin}_{\y } \{F(\x,\y), \ s.t. \ \y \in \Y \cap \S(\x) \} $ is nonempty for all $\x \in \X$.
		\item[(6)] $g(\z)$ is continuous on $\mathcal{Z}$.
	\end{itemize}
\end{asu}

Assumptions~\ref{assum:convex}(1–3), concerning the boundedness and smoothness of the problem functions, are standard in the literature on convex BLOs; see, e.g., \cite{liu2020generic,grazzi2020iteration,ji2021bilevel}.  
Assumption~\ref{assum:convex}(5) ensures the well-definedness of the problem. Moreover, all conditions in Assumption~\ref{assum:convex} are satisfied for the convex BLO numerical example presented in Section~\ref{numerical_results}.
Before presenting the convergence result, we recall the auxiliary function $ \varphi_K(\x, \z) = F(\x,\y_K(\x,\z))$, and rewrite the approximation problem in Eq.~\eqref{eq:upper_varphiK} into the following compact form, 
\begin{equation}\label{eq:proximal_prior}
	\min_{\mathbf{x}\in\X, \z \in \mathcal{Z}} \psi_K(\x,\z) = \varphi_K(\x, \z) + g(\z).
\end{equation}

We propose following two essential properties for general schematic iterative module $\mathcal{T}_k(\x,\cdot)$ for the convergence analysis:
\begin{enumerate}
	\item[(1)]\textbf{UL property with IA:} For each $\x \in \mathcal{X}$, $\z \in \mathcal{Z}$,
	\begin{equation*}
		\lim\limits_{K \rightarrow \infty}\varphi_K(\x,\z) \rightarrow \varphi(\x).\label{eq:dist-varphi}
	\end{equation*}
	\item[(2)]\textbf{LL property with IA:} $\{\y_{K}(\x,\z)\}$ is uniformly bounded on $\X \times \mathcal{Z}$, and for any $\epsilon>0$, there exists $k(\epsilon)>0$ such that whenever $K>k(\epsilon)$, 	
	\begin{equation*}
		\sup_{\x \in \X, \z \in \mathcal{Z}} \left\{ f(\x,\y_K(\x,\z)) - f^{\ast}(\x,\z) \right\} \le \epsilon.
	\end{equation*}
\end{enumerate}

Now, we are ready to present the asymptotic convergence result of Alg.~\ref{alg:AIT-C}.
\begin{theorem}\label{thm:general}
	Suppose both the above UL and LL properties with IA hold.
	Let $(\x_K, \z_K)$ be a minimum of the approximation problem in Eq.~\eqref{eq:upper_varphiK}, i.e.,
	\begin{equation*}
		\psi_K(\x_K, \z_K) \le \psi_K(\x, \z), \quad \forall \x \in \X, \z \in \mathcal{Z}.
	\end{equation*}
	Then, any limit point $\bar{x}$ of the sequence $\{x_K\}$ satisfies that $\bar{x}\in\mathrm{argmin}_{x\in X}\varphi(x)$, i.e., $\bar{x}$ is a solution to BLO in Eq.~\eqref{blo_problem}.
\end{theorem}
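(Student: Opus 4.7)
The plan is to use a variational-limit argument that extracts a cluster point of the sequence of minimizers, transfers LL optimality to the limit via the uniform LL property, and pairs it with an upper bound obtained by testing the optimality of $(\x_K,\z_K)$ against a fixed feasible competitor. This is a direct analogue of the standard consistency proof for outer approximations in bilevel optimization.

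First I would exploit the compactness of $\X$ and $\mathcal{Z}$ together with the uniform boundedness of $\{\y_K(\x,\z)\}$ on $\X\times\mathcal{Z}$ (the first half of the LL property with IA) to extract, along a subsequence still indexed by $K$, joint limits $\x_K \to \bar{\x}$, $\z_K \to \bar{\z}\in\mathcal{Z}$, and $\y_K(\x_K,\z_K) \to \bar{\y}\in\Y$. Continuity of $F$ and $g$ then gives $\lim_K \psi_K(\x_K,\z_K) = F(\bar{\x},\bar{\y}) + g(\bar{\z})$ along this subsequence.

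For the lower bound I would evaluate the uniform LL inequality at the moving pair, which forces $f(\x_K,\y_K(\x_K,\z_K)) - f^{\ast}(\x_K,\z_K) \to 0$. Passing to the limit using continuity of $f$ and of $\x\mapsto\min_{\y\in\Y} f(\x,\y)$ (the latter provided by the level-bounded hypothesis via Berge's maximum theorem) places $\bar{\y}$ in $\S(\bar{\x})$, so $F(\bar{\x},\bar{\y}) \geq \varphi(\bar{\x})$. For the upper bound, pick any $\hat{\x} \in \arg\min_{\x\in\X}\varphi(\x)$ (which exists by continuity of $\varphi$ on the compact set $\X$, a consequence of the assumptions) and any $\z^{\star} \in \arg\min_{\z\in\mathcal{Z}} g(\z)$; the optimality of $(\x_K,\z_K)$ gives
\begin{equation*}
\psi_K(\x_K,\z_K) \;\leq\; \varphi_K(\hat{\x},\z^{\star}) + g(\z^{\star}),
\end{equation*}
and the pointwise UL property $\varphi_K(\hat{\x},\z^{\star}) \to \varphi(\hat{\x})$ yields $\limsup_K \psi_K(\x_K,\z_K) \leq \min_{\x\in\X}\varphi(\x) + \min_{\z\in\mathcal{Z}} g(\z)$.

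Chaining the two bounds and using $g(\bar{\z}) \geq \min_{\mathcal{Z}} g$ cancels the $g$-contribution and leaves $\varphi(\bar{\x}) \leq \min_{\x\in\X}\varphi(\x)$, so $\bar{\x}\in\arg\min_{\x\in\X}\varphi(\x)$ as required. The main obstacle is the liminf step: it genuinely requires the \emph{uniform} LL convergence over $\X\times\mathcal{Z}$, since pointwise convergence would not be enough to conclude that the limit $\bar{\y}$ of $\y_K(\x_K,\z_K)$ lies in $\S(\bar{\x})$ when the base point $(\x_K,\z_K)$ is also varying. A secondary technicality is ensuring $\y_k(\x,\z)\in\Y$ throughout the trajectory so that $\bar{\y}\in\Y$; this is automatic for the concrete choices of $\mathcal{T}_k(\x,\cdot)$ considered (projected gradient, BDA, Nesterov variant) since each bakes a projection onto the closed convex set $\Y$ into every step.
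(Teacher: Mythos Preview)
Your proposal is correct and follows essentially the same variational-limit skeleton as the paper: extract joint cluster points $(\bar{\x},\bar{\z},\bar{\y})$, use the uniform LL property to place $\bar{\y}\in\S(\bar{\x})$, and pass the optimality inequality to the limit via the pointwise UL property.

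The one place where you deviate is the handling of $g$. You compare $(\x_K,\z_K)$ against $(\hat{\x},\z^{\star})$ with $\z^{\star}\in\arg\min_{\mathcal{Z}}g$ and then cancel by $g(\bar{\z})\ge\min_{\mathcal{Z}}g$. The paper instead tests against $(\x,\bar{\z})$ for arbitrary $\x$, so that $g(\bar{\z})$ appears on \emph{both} sides and drops out immediately; this is slightly cleaner and, more to the point, avoids the side issues you flag yourself. In particular you do not need the existence of $\hat{\x}\in\arg\min_{\x\in\X}\varphi(\x)$ (whose justification via ``continuity of $\varphi$'' is not entirely obvious under the stated assumptions): comparing against arbitrary $\x$ already yields $\varphi(\bar{\x})\le\varphi(\x)$ for all $\x\in\X$. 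Similarly, for the passage $f^{\ast}(\x_K)\to f^{\ast}(\bar{\x})$ the paper invokes only upper semi-continuity of $f^{\ast}$, which is all that is needed for the one-sided inequality $f(\bar{\x},\bar{\y})\le f^{\ast}(\bar{\x})$; your appeal to Berge is valid (since $\Y$ is compact) but stronger than required.
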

\begin{proof}
	For any limit point $\bar{\x}$ of the sequence $\{\x_K\}$, let $\{\x_{l}\}$ be a subsequence of $\{\x_K\}$ such that $\x_{l} \rightarrow \bar{\x} \in \X$. As $\{\y_K(\x,\z)\}$ is uniformly bounded on $\X \times \mathcal{Z}$ and $\{\z_K\} \subset \mathcal{Z}$ is bounded, we can further find a subsequence $\{(\x_{m},\z_m)\}$ of $\{(\x_{l},\z_l)\}$ satisfying $\y_m(\x_m,\z_m) \rightarrow \bar{\y}$ and $\z_m \rightarrow \bar{\z}$ for some $\bar{\y}$ and $\bar{\z}$. It follows from the {LL property with IA} that for any $\epsilon > 0$, there exists $M(\epsilon) > 0$ such that for any $m > M(\epsilon)$, we have
	\begin{equation*}
		f(\x_m,\y_m(\x_m,\z_m)) - f^{\ast}(\x_m) \le \epsilon.
	\end{equation*}
	Thanks to the continuity of $f(\x,\y)$, we have that $f^{\ast}(\x):=\min_{\y}f(\x,\y)$ is Upper Semi-Continuous (USC for short) on $\X$ (see, e.g., \cite[Lemma 2]{liu2020generic}). 
	By letting $m \rightarrow \infty$, and since $f$ is continuous and $f^{\ast}(\x)$ is USC on $\X$, we have $f(\bar{\x},\bar{\y}) - f^{\ast}(\bar{\x}) \le \epsilon$. As $\epsilon$ is arbitrarily chosen, we have $f(\bar{\x},\bar{\y}) - f^{\ast}(\bar{\x}) \le 0$ and thus $\bar{\y} \in \S(\bar{\x})$.
	Next, as $F$ is continuous at $(\bar{\x},\bar{\y})$ and g is continuous at $\bar{\z}$, for any $\epsilon > 0$, there exists $M(\epsilon) > 0$ such that for any $m > M(\epsilon)$, it holds
	\begin{equation*}
		F(\bar{\x},\bar{\y}) + g(\bar{\z}) \le F(\x_m,\y_m(\x_m,\z_m)) + g(\z_m) + \epsilon.
	\end{equation*}
	Then, we have, for any $m > M(\epsilon)$ and $\x \in \X$, $\z \in \mathcal{Z}$,
	\begin{equation}\label{eq1}
		\begin{aligned}
			\ \varphi(\bar{\x}) + g(\bar{\z}) &= \inf_{\y \in \S(\bar{\x}) } F(\bar{\x}, \y) + g(\bar{\z})\\
			&\le F(\bar{\x},\bar{\y}) + g(\bar{\z}) \\
			&\le F(\x_m,\y_m(\x_m,\z_m)) + g(\z_m) + \epsilon \\
			&\le \varphi_m(\x,\z) + g(\z) + \epsilon.\quad\quad
		\end{aligned}
	\end{equation}
	Taking $\z = \bar{\z}$, $m \rightarrow \infty$ and by the {UL property with IA}, we have
	\begin{equation*}
		\begin{aligned}
			\varphi(\bar{\x}) \le \lim_{m \rightarrow \infty}\varphi_m(\x,\bar{\z}) + \epsilon = \varphi(\x) + \epsilon, \ \forall \x \in \X.
		\end{aligned}
	\end{equation*}
	By taking $\epsilon \rightarrow 0$, we have $\varphi(\bar{\x}) \le \varphi(\x), \ \forall \x \in \X$ which implies $\bar{\x} \in \arg\min_{\x \in \X} \varphi(\x)$.
\end{proof}

In the above convergence result, we observe that the convergence of Eq.~\eqref{eq:upper_varphiK} to the original bilevel problem in Eq.~\eqref{blo_problem} does not rely on any restrictive assumptions on $g$ or $\mathcal{Z}$. This implies that prior knowledge can be incorporated into $g$ or $\mathcal{Z}$ to potentially accelerate convergence without compromising the overall correctness of the method.

The above convergence result characterizes the convergence of the approximation problem defined in Eq.~\eqref{eq:upper_varphiK} to the original bilevel problem in Eq.~\eqref{blo_problem} in terms of the minimum. However, in practice the approximation problem in Eq.~\eqref{eq:upper_varphiK} is usually nonconvex, and thus stationary points are typically obtained by applying the proximal gradient method. It is therefore natural to characterize the limiting behavior of the stationary points of Eq.~\eqref{eq:upper_varphiK}. To exclude pathological scenarios in which $\varphi_K(\x,\z)$ exhibits infinitely oscillatory behavior as $K \to \infty$, we impose a regularity condition on the subdifferential of $\varphi_K(\x,\z)$. 
Recall that for a proper l.s.c. function $h:\mathbb{R}^d \to \mathbb{R}\cup\{+\infty\}$, the Fréchet subdifferential at $u$ is 
$\hat{\partial} h(u) := \{ \xi \mid \liminf_{v \to u,\, v \neq u} (h(v)-h(u)-\langle \xi, v-u\rangle)/\|v-u\| \ge 0 \}$, 
and the limiting subdifferential is 
$\partial h(u) := \{ \xi \mid \exists\, u^k \to u,\; h(u^k)\to h(u),\; \xi^k \in \hat{\partial} h(u^k),\; \xi^k \to \xi \}$.
Building on the notion of prox-regularity in \cite[Definition 13.27]{rockafellar2009variational}, we make the following assumption. 
We say that $\psi_K(\x,\z)$ is uniformly prox-regular at $(\bar{\x},\bar{\z})$ if there exist $\epsilon>0$ and $\rho\ge 0$ such that, for all $K$, 
$\psi_K(\x',\z') \ge \psi_K(\x,\z) + \langle \xi, (\x',\z')-(\x,\z)\rangle - \tfrac{\rho}{2}\|(\x',\z')-(\x,\z)\|^2$ 
whenever $\|(\x,\z)-(\bar{\x},\bar{\z})\|\le \epsilon$, $\|(\x',\z')-(\bar{\x},\bar{\z})\|\le \epsilon$, and $\xi \in \partial \psi_K(\x,\z)$.

\begin{theorem}\label{thm: thm_station}
	Suppose both the above UL and LL properties with IA hold.
Let $(\x_K,\z_K)$ be a $\epsilon_K$-stationary point of the approximation problem in Eq.~\eqref{eq:upper_varphiK} with $\epsilon_K \rightarrow 0$, i.e.,
	\begin{equation*}
		\xi_K \in \partial \psi_K(\x_K, \z_K) + \mathcal{N}_{\X \times \mathcal{Z}}(\x_K, \z_K) \quad \text{with} \,\,\, \|\xi_K\| \le \epsilon_K.
	\end{equation*}
	Then, any limit point $(\bar{\x},\bar{\z})$ of the sequence ${(\x_K,\z_K)}$ is such that, if $\psi_K$ is uniformly prox-regular at $(\bar{\x},\bar{\z})$, we have $$0 \in \partial \varphi(\bar{\x}) + \mathcal{N}_{\X}(\bar{\x}).$$ That is, $\bar{\x}$ is a stationary point to BLO in Eq.~\eqref{single_reform}.
\end{theorem}
\begin{proof}
Let $(\bar{\x},\bar{\z})$ be a limit point of ${(\x_K,\z_K)}$, and let ${(\x_{l},\z_{l})}$ be a subsequence such that $(\x_{l},\z_{l}) \rightarrow (\bar{\x}, \bar{\z}) \in \X \times \mathcal{Z}$. Since ${\y_K(\x,\z)}$ is uniformly bounded on $\X \times \mathcal{Z}$, there exists a further subsequence ${(\x_{m},\z_{m})}$ with $\y_m(\x_m,\z_m) \rightarrow \bar{\y}$ for some $\bar{\y}$.
By the same arguments as in the proof of Theorem~\ref{thm:general}, we obtain
	\begin{equation}\label{eq11}
		\varphi(\bar{\x}) + g(\bar{\z}) \le \liminf_{m  \rightarrow \infty}\, \psi_m(\x_m,\z_m).
	\end{equation}
	Next, since $(\x_K,\z_K)$ is an $\epsilon_K$-stationary point of $\min_{\x\in\X, \z \in \mathcal{Z}} \psi_K(\x,\z)$ and $\psi_K$ is uniformly prox-regular at $(\bar{\x},\bar{\z})$, there exist $\epsilon>0$ and $\rho \ge 0$ such that, for sufficiently large $m$,
	\[
	\begin{aligned}
			\psi_m(\x,\z) \ge \,&\psi_m(\x_m, \z_m) + \langle \xi_m, (\x,\z)-(\x_m, \z_m) \rangle \\ &- \tfrac{\rho}{2}\|(\x,\z)- (\x_m, \z_m) \|^2,
	\end{aligned}
	\] 
	for all $(\x,\z) \in \X \times \mathcal{Z}$ with $\|(\x,\z)-(\bar{\x},\bar{\z})\|\le \epsilon$. Fix any $\x \in \X$ with $\|\x - \bar{\x}\|\le \epsilon$, and take $\z = \bar{\z}$. Passing to the limit $m \to \infty$ in the inequality above, and combining with the UL property with IA and \eqref{eq11}, and noting that $\xi_m \to 0$, we deduce
	\[
	\varphi(\x) \ge \varphi(\bar{\x}) - \tfrac{\rho}{2}\|(\x,\z)- (\bar{\x},\bar{\z})\|^2.
	\]
	This implies that $0 \in \partial \varphi(\bar{\x}) + \mathcal{N}_{\X}(\x)$ and therefore $\bar{\x}$ is a stationary point of $\min_{\x \in \X} \varphi(\x)$.
\end{proof}

Next, we show that both the UL and LL properties with IA hold when $\mathcal{T}_k(\x,\cdot)$ is chosen as some specific schemes. Here we provide the asymptotic convergence analysis of Alg.~\ref{alg:AIT-C}, when $\mathcal{T}_k(\x,\cdot)$ is defined as BDA\cite{liu2022general} in Eq.~\eqref{eq:improved-lower}.

\begin{theorem}\label{thm_convergence}
	%Suppose Assumptions~\ref{assum:F}, \ref{convex_assump} are satisfied, $\X$, $\Y$ and $\mathcal{Z}$ are compact, and .
	Let $\{\y_k(\mathbf{x},\z)\}$ be the output generated by \eqref{eq:improved-lower} with
	$s_l \in (0,1/L_f)$, $s_u \in (0,1/L_F)$, $\mu \in (0,1)$, $\alpha_k = \frac{1}{k+1}$, $\beta_k \in [\underline{\beta}, 1]$ with some $\underline{\beta} > 0$, $|\beta_k - \beta_{k-1}| \le \frac{c_\beta}{(k+1)^2}$ with some $c_\beta > 0$,
	then we have that both the LL and UL properties with IA hold. 
\end{theorem}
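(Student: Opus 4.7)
The plan is to verify the LL and UL properties with IA separately for the BDA iteration in Eq.~\eqref{eq:improved-lower}, leveraging the aggregation structure of $\mathbf{d}^F_k$ and $\mathbf{d}^f_k$ together with the prescribed step-size rules. The starting point is that $\X, \mathcal{Z}, \Y$ are compact and each $\y_{k+1}(\x,\z)$ is produced by a projection onto $\Y$, so uniform boundedness of $\{\y_K(\x,\z)\}$ on $\X\times\mathcal{Z}$ is immediate. The remaining work is a uniform-in-$(\x,\z)$ convergence statement on the two levels, which I would obtain by recycling the BDA convergence machinery of~\cite{liu2022general} (treating $\z$ as an additional parameter entering only through the initialization $\y_0=\z$) and checking that every constant in that analysis can be bounded uniformly over the compact set $\X\times\mathcal{Z}$.

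For the LL property, I would derive a standard descent-type inequality for $f(\x,\y_{k+1}(\x,\z))-f^{*}(\x)$ from the $L_f$-smoothness and convexity of $f(\x,\cdot)$ combined with $s_l\in(0,1/L_f)$. The aggregated direction $\mu\alpha_k\mathbf{d}^F_k+(1-\mu)\beta_k\mathbf{d}^f_k$ can be split into the ``pure LL'' part (which is a contractive projected gradient step) and a perturbation of size $O(\alpha_k)=O(1/(k+1))$, because $\nabla_\y F$ and $\nabla_\y f$ are continuous on the compact set $\X\times\Y$ and hence uniformly bounded. Summing the resulting recursion and using $\sum_k\alpha_k^2<\infty$ together with $\beta_k\ge\underline\beta>0$ yields $f(\x,\y_K(\x,\z))-f^{*}(\x)\to 0$, and the uniformity in $(\x,\z)$ follows because every constant that appears (Lipschitz moduli, gradient bounds, diameter of $\Y$) is taken over the compact $\X\times\Y\times\mathcal{Z}$.

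The UL property is the subtler piece and will be the main obstacle. I would track the telescoping behaviour of the UL objective values $F(\x,\y_k(\x,\z))$ along the BDA trajectory. The key identity is the one used in~\cite{liu2022general}: by the convexity and $L_F$-smoothness of $F(\x,\cdot)$ and the choice $s_u\in(0,1/L_F)$, one obtains an inequality of the form $F(\x,\y_{k+1}(\x,\z))\le (1-\alpha_k)F(\x,\y_k(\x,\z))+\alpha_k F(\x,\y^*)+R_k$ for any $\y^*\in\hat{\S}(\x)$, where the residual $R_k$ is controlled by the LL suboptimality shown in the previous paragraph and by $|\beta_k-\beta_{k-1}|\le c_\beta/(k+1)^2$. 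Unrolling this recursion with $\alpha_k=1/(k+1)$ and taking $\limsup_K$ yields $\limsup_K\varphi_K(\x,\z)\le F(\x,\y^*)=\varphi(\x)$, while the matching $\liminf$ bound $\liminf_K\varphi_K(\x,\z)\ge\varphi(\x)$ follows from the LL property together with lower semicontinuity of $\varphi$, using that any cluster point of $\y_K(\x,\z)$ belongs to $\S(\x)$ by the argument already given in the proof of Theorem~\ref{thm:general}.

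The hard part is therefore ensuring that the ``selection'' effect of the UL gradient term $\mu\alpha_k\mathbf{d}^F_k$ is strong enough to drive the iterates towards $\hat{\S}(\x)$ rather than an arbitrary element of $\S(\x)$, while simultaneously the LL part does not overshoot. This is precisely what the scaling $\alpha_k\to 0$ with $\sum_k\alpha_k=\infty$ and the smoothness condition $|\beta_k-\beta_{k-1}|=O(1/(k+1)^2)$ are designed to achieve, and it is the step where I would need to be most careful about reproducing the estimates of~\cite{liu2022general} in a parameterized form so that all constants can be taken uniformly over $\X\times\mathcal{Z}$. Once both properties are established, Theorem~\ref{thm:general} applies and the convergence of Alg.~\ref{alg:AIT-C} with the BDA inner scheme follows.
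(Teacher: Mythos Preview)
Your proposal is correct and takes essentially the same approach as the paper: both arguments reduce the two properties to the BDA convergence machinery of~\cite{liu2022general}, observing that the initialization $\z$ enters only through $\y_0$ and that compactness of $\X\times\Y\times\mathcal{Z}$ makes every constant uniform. The paper's proof simply cites the relevant theorems there (Theorem~3 for the UL property, Theorems~4--5 for a quantitative LL rate $C\sqrt{(1+\ln K)/K^{1/4}}$), whereas you sketch the internals of those results; the substance is the same.
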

\begin{proof}
	We can easily obtain the UL property with IA from \cite[Theorem 3]{liu2022general}. Next, since Assumption~\ref{assum:convex} holds, $\X$, $\Y$ and $\mathcal{Z}$ are compact, by using the convergence rate result established in  \cite[Theorem 4]{liu2022general} and the similar arguments in the proof of \cite[Theorem 5]{liu2022general}, it can be shown that there exists $C > 0$ such that for any $\x \in \X$, $\z \in \mathcal{Z}$, we have 
	\[
	f(\x,\y_K(\mathbf{x},\z)) - f^*(\x) \le C\sqrt{\frac{1+\ln K}{K^{\frac{1}{4}}}}.
	\]
	Because $\sqrt{\frac{1+\ln K}{K^{\frac{1}{4}}}} \rightarrow 0$ as $K \rightarrow \infty$, $\{\y_K(\mathbf{x},\z)\} \subset \Y$, and $\Y$ is compact, LL property with IA holds. 
\end{proof}

\subsubsection{Lower-Level Singleton Case}
In this part, we focus on the convex BLOs whose lower-level problem solution set is singleton for any $\x \in \X$. We first consider constructing the iterative module $\mathcal{T}_k(\x,\cdot)$ by projected gradient method as described in Eq.~\eqref{yk_def}.	

\begin{theorem}\label{pg_convergence}
	Suppose $\S(\x)$ is singleton for all $\x \in \X$. Let $\{\y_k(\mathbf{x},\z)\}$ be the output generated by \eqref{yk_def} with
	$\alpha \in (0,1/L_f)$, then we have that both the LL and UL convergence properties with IA hold. 
\end{theorem}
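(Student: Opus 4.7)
The plan is to verify the two IA properties separately, using the singleton hypothesis on $\S(\x)$ as the bridge from function-value convergence to iterate convergence.

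First, for the LL property with IA, I would invoke the standard descent-lemma analysis for projected gradient applied to a convex $L_f$-smooth function on the convex compact set $\Y$. With step size $\alpha\in(0,1/L_f)$, a classical Fej\'er-monotonicity argument yields
\[
f(\x,\y_K(\x,\z)) - f^{\ast}(\x) \le \frac{\|\z - \y^{\ast}(\x)\|^2}{2\alpha K},
\]
where $\y^{\ast}(\x)$ denotes the unique element of $\S(\x)$. Since $\mathcal{Z}$ and $\Y$ are compact and $\y^{\ast}(\x)\in\Y$ by definition of $\S(\x)$, the numerator is uniformly bounded on $\X\times\mathcal{Z}$, giving the uniform rate required by the LL property. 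Uniform boundedness of $\{\y_K(\x,\z)\}$ is automatic from $\y_K(\x,\z)\in\Y$.

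Next, for the UL property with IA, fix $\x\in\X$ and $\z\in\mathcal{Z}$. The step above gives $f(\x,\y_K(\x,\z)) \to f^{\ast}(\x)$. Since $\{\y_K(\x,\z)\}\subset\Y$ (compact), any cluster point $\bar{\y}$ satisfies $f(\x,\bar{\y}) = f^{\ast}(\x)$ by continuity of $f$, hence $\bar{\y}\in\S(\x)=\{\y^{\ast}(\x)\}$. By the standard subsequence principle the whole sequence then converges, $\y_K(\x,\z) \to \y^{\ast}(\x)$. Continuity of $F(\x,\cdot)$ yields $\varphi_K(\x,\z) = F(\x,\y_K(\x,\z)) \to F(\x,\y^{\ast}(\x)) = \varphi(\x)$, which is exactly the UL property (needed only pointwise).

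The arguments are largely routine once the singleton hypothesis is available. The only delicate point is that the standard projected-gradient bound must be applied with the \emph{constrained} minimizer $\y^{\ast}(\x)\in\Y$ as the reference point, for which the singleton assumption together with Assumption~\ref{assum:convex}(5) is essential to guarantee $\S(\x)\subset\Y$. Uniformity in $(\x,\z)$ then follows directly from compactness of $\X$, $\Y$, and $\mathcal{Z}$, after which the passage from uniform function-value convergence to pointwise iterate convergence is a clean consequence of uniqueness of the minimizer.
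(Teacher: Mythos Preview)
Your proposal is correct and follows essentially the same route as the paper's proof: invoke the standard $O(1/K)$ projected-gradient rate to obtain the LL property uniformly via compactness, then use the singleton hypothesis to upgrade function-value convergence to iterate convergence $\y_K(\x,\z)\to\y^{\ast}(\x)$, and finish the UL property by continuity of $F$. The one unnecessary detour is your appeal to Assumption~\ref{assum:convex}(5) to secure $\S(\x)\subset\Y$: this containment is already built into the definition $\S(\x)=\arg\min_{\y\in\Y}f(\x,\y)$, so no extra hypothesis is needed there.
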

\begin{proof}
	According to \cite[Theorem 10.21]{LiuLYZZ21}, when $f(\x,\cdot)$ is convex and $L_f$-smooth for any $\x\in \X$, and $\alpha \in (0,1/L_f)$, $\{\y_{k}(\x,\z)\}$ satisfies
	\begin{equation}\label{PG_rate}
		\begin{aligned}
			f(\x,\y_K(\x,\z)) - f^{\ast}(\x) &\le  \frac{\mathrm{dist}(\y_0(\x,\z),\S(\x))}{2\alpha K} \\ &= \frac{ \mathrm{dist}(\z, \S(\x))}{2\alpha K},
		\end{aligned}
	\end{equation}
	where $\mathrm{dist}(\z,S(\x))$ denotes the distance from $\z$ to the set $S(\x)$.
	Since $\X$, $\Y$ and $\mathcal{Z}$ are all compact sets, then there exists $M > 0$ such that $\mathrm{dist}(\z,\S(\x)) \le M$ for $(\x,\z) \in \X \times \mathcal{Z}$. Then we can easily obtained that {LL property with IA} holds from  Eq.~\eqref{PG_rate}, $\{\y_K(\mathbf{x},\z)\} \subset \Y$ and $\Y$ is compact. Next, given any $(\x,\z) \in \X \times \mathcal{Z}$, for any limit point $\bar{\y}(\x,\z)$ of sequence $\{\y_k(\x,\z) \}$, we have $f(\x,\bar{\y}(\x,\z)) = f^*(\x)$ and thus $\bar{\y}(\x,\z) \in \S(\x)$ from \eqref{PG_rate}. Therefore, since $\{\y_k(\x,\z) \}$ is bounded and $\S(\x)$ is singleton, we have $\y_k(\x,\z) \rightarrow \S(\x) = \hat{\S}(\x)$ and thus {UL property with IA} holds.
\end{proof}

Next, we consider the iterative module $\mathcal{T}_k(\x,\cdot)$ as the Nesterov’s acceleration schemes described in Eq.~\eqref{yk_def2}.

\begin{theorem}
	Suppose $\S(\x)$ is singleton for all $\x \in \X$. Let $\{\y_k(\mathbf{x},\z)\}$ be the output generated by Eq.~\eqref{yk_def2} with
	$\alpha \in (0,1/L_f)$, then we have that both the LL and UL convergence properties with IA hold. 
\end{theorem}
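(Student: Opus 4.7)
The plan is to mirror the proof of Theorem~\ref{pg_convergence} almost line by line, substituting the $O(1/K)$ rate of projected gradient with the accelerated $O(1/K^2)$ rate of Nesterov's scheme in Eq.~\eqref{yk_def2}. The singleton assumption on $\mathcal{S}(\x)$ lets the UL property with IA be deduced from the LL property in exactly the same way as before, so the only new ingredient needed is the accelerated convergence rate of the LL iterates.

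First, I would invoke the classical FISTA / Nesterov acceleration rate for constrained convex smooth minimization from \cite{beck2009fast}: since $f(\x,\cdot)$ is convex and $L_f$-smooth, $\alpha \in (0, 1/L_f)$, and $\y_0(\x,\z)=\z$, the iterates generated by Eq.~\eqref{yk_def2} satisfy an estimate of the form
\begin{equation*}
f(\x,\y_K(\x,\z)) - f^{\ast}(\x) \le \frac{C\,\mathrm{dist}(\z,\mathcal{S}(\x))^{2}}{\alpha\,(K+1)^2},
\end{equation*}
for some universal constant $C>0$, uniformly in $(\x,\z)\in \X \times \mathcal{Z}$. Because $\X$, $\mathcal{Z}$ and $\Y$ are all compact, one has $\mathrm{dist}(\z,\mathcal{S}(\x)) \le M$ for some $M>0$ and every $(\x,\z)\in \X \times \mathcal{Z}$; combined with the rate above, this produces the uniform convergence
$\sup_{\x \in \X,\ \z \in \mathcal{Z}}\{ f(\x,\y_K(\x,\z)) - f^{\ast}(\x)\}\to 0$ as $K \to \infty$. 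Since each $\y_K(\x,\z)\in\Y$ by the projection step and $\Y$ is compact, the LL property with IA is established.

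For the UL property with IA, I would copy the argument from the proof of Theorem~\ref{pg_convergence} verbatim. Fix $(\x,\z)\in \X \times \mathcal{Z}$ and let $\bar{\y}(\x,\z)$ be an arbitrary limit point of $\{\y_k(\x,\z)\}\subset\Y$. The LL property and continuity of $f$ force $f(\x,\bar{\y}(\x,\z))=f^{\ast}(\x)$, hence $\bar{\y}(\x,\z)\in\mathcal{S}(\x)$. Because $\mathcal{S}(\x)$ is singleton (and by Assumption~\ref{assum:convex}(5) coincides with $\hat{\mathcal{S}}(\x)$), every convergent subsequence of $\{\y_k(\x,\z)\}$ has the same limit, and boundedness in $\Y$ then yields $\y_k(\x,\z)\to \hat{\mathcal{S}}(\x)$. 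Continuity of $F$ delivers $\varphi_K(\x,\z)\to \varphi(\x)$, which is the UL property with IA.

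The main technical obstacle is verifying that the standard FISTA rate applies to the specific scheme in Eq.~\eqref{yk_def2}: one must identify the projection onto $\Y$ with the proximal mapping of the indicator $\iota_{\Y}$ and check that the momentum sequence $t_k$ and auxiliary sequence $\u^{k}$ match those used in the Beck--Teboulle analysis. A minor side issue is making sure the $\mathrm{dist}(\z,\mathcal{S}(\x))$ appearing in the accelerated bound is finite and uniformly bounded on $\X\times\mathcal{Z}$, which follows from compactness of $\Y$ together with $\mathcal{S}(\x)\subset\Y$. Once these identifications are in place, the rest of the argument is a routine adaptation of Theorem~\ref{pg_convergence}.
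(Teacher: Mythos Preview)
Your proposal is correct and follows essentially the same approach as the paper: invoke the known $O(1/K^2)$ Nesterov/FISTA rate, use compactness of $\X,\Y,\mathcal{Z}$ to make the bound uniform and establish the LL property, then reuse the singleton argument from Theorem~\ref{pg_convergence} verbatim for the UL property. The only cosmetic differences are the specific reference cited for the accelerated rate and whether the distance appears squared in the numerator, neither of which affects the argument.
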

\begin{proof}
	According to \cite[Theorem 10.34]{LiuLYZZ21}, when $f(\x,\cdot)$ is convex and $L_f$-smooth for any $\x\in \X$, and $\alpha  \in (0,1/L_f)$, $\{\y_{k}(\x,\z)\}$ admits the following property,
	\begin{equation}
		\begin{aligned}
			f(\x,\y_K(\x,\z)) - f^{\ast}(\x) &\le  \frac{2\mathrm{dist}(\y_0(\x,\z),\S(\x))}{\alpha(K+1)^2} \\
			&= \frac{2\mathrm{dist}(\z, \S(\x))}{\alpha(K+1)^2}.
		\end{aligned}
	\end{equation}
	Since $\X$, $\Y$ and $\mathcal{Z}$ are compact sets, by using the above inequality and the similar arguments as in the proof of Theorem \ref{pg_convergence}, we can show that both the LL and UL  convergence properties with IA hold.
\end{proof}

\subsection{Convergence Analysis for $\textrm{AIT}_{NC}$}\label{sec42}

In this part,  we first conduct the asymptotic convergence analysis of $\textrm{AIT}_{NC}$ in Alg.~\ref{alg:AIT-N} with general schematic iterative module $\mathcal{T}_k$.  Indeed, Alg.~\ref{alg:AIT-N} can be regarded as the application of projected gradient method on the following single-level approximation problem of Eq.~\eqref{blo_problem},
\begin{equation}\label{eq:upper_phiK}
	\begin{aligned}
		&\min_{\x \in \X, \z \in \Y } ~~  \phi_{{K}}(\x,\z) := \max_k \left\lbrace F(\x,\y_{k}(\x,\z))\right\rbrace \\
		&s.t. \begin{cases} \y_0(\x,\z) = \z, &\\
		\y_{k+1}(\mathbf{x},\z) = \mathcal{T}_k(\x,\y_k(\mathbf{x},\z) ), ~~ k = 0,\ldots,K-1.&		\end{cases}
	\end{aligned}
\end{equation}
Then we investigate the convergence of solutions of this approximation problem towards those of the original BLO under following mild assumptions.

\begin{asu}\label{assum1}We make following standing assumptions throughout this subsection:
	\begin{itemize}
		\item[(1)] $F, f, \nabla_\y f$ are continuous functions on $\X\times\mathbb{R}^m$.
		\item[(2)] $f(\x,\cdot) : \mathbb{R}^m \rightarrow \mathbb{R}$ is $L_f$-smooth for any $\x\in \X$.
		\item[(3)] $\X$ and $\Y$ are convex compact sets.
		\item[(4)] $\S(\x)$ is nonempty for any $\x \in \X$.
		\item[(5)] For any $(\bar{\x},\bar{\y})$ minimizing $F(\x,\y)$ over constraints $\x \in \X, \y \in \Y$ and $\y \in \tilde{\S}(\x)$, it holds that $\bar{\y} \in \S(\x)$.
	\end{itemize}
\end{asu}

Note that  $\tilde{\S}(\x)$ denotes the set of LL stationary points, i.e., $\tilde{\S}(\x) = \{\y \in \Y | 0 = \nabla_{\y}f(\x,\y) + \mathcal{N}_{\Y}(\y) \}.$ %It should be noticed that $\y \in \tilde{\S}(\x)$ if and only if $\mathcal{R}_{\alpha}(\x,\y) = 0$. 
Assumptions~\ref{assum1}(1)–(2), which concern the boundedness and smoothness of the problem functions, are standard in the bilevel optimization literature; see, e.g., \cite{liu2022bome,shen2023penalty,huang2024optimal}.  
Assumption~\ref{assum1}(4) ensures that the problem is well-defined.  
Assumption~\ref{assum1}(5) guarantees that the set of LL problem stationary points does not relax the bilevel problem, a condition that can be ensured by the Polyak–Łojasiewicz (PL) property of LL problem, as assumed in other bilevel optimization studies; see, e.g., \cite{liu2022bome,shen2023penalty,huang2024optimal}.  
Finally, all conditions in Assumption~\ref{assum1} are satisfied by the numerical example presented in Section~\ref{numerical_results}.

We propose the \textbf{weak LL property with IA} for the asymptotic convergence analysis of Algorithm \ref{alg:AIT-N} with general schematic iterative module $\mathcal{T}_k(\x,\cdot)$. We say that \textbf{weak LL property with IA} is satisfies if $\mathcal{T}_k(\x,\z) = \z$ for any $k$ and $\z \in \tilde{\S}(\x)$; $\{\y_{K}(\x,\z)\}$ is uniformly bounded on $\X \times \mathcal{Z}$; and there exists $\alpha > 0$ such that for any $\epsilon>0$, there exists $k(\epsilon)>0$ such that whenever $K>k(\epsilon)$, 	
\begin{equation*}
	\sup_{\x \in \X, \z \in \mathcal{Z}}\left\{ \min_{0 \le k \le K} \|  \mathcal{R}_{\alpha} (\x,\y_{k}(\x,\z)) \| \right\} \le \epsilon,
\end{equation*}
where 
\[
\mathcal{R}_{\alpha}(\x,\y): = \y - \mathtt{Proj}_{\Y} \left(\y - \alpha \nabla_{\y}f(\x,\y) \right).
\]
It should be noticed that $\mathcal{R}_{\alpha}(\x,\y) = 0$ if and only if $\y \in \tilde{\S}(\x)$. 
In the following theorem, we establish the asymptotic convergence of Alg.~\ref{alg:AIT-N} when the weak LL property with IA is satisfied.

\begin{theorem}\label{Thm1}
	Suppose the weak LL property with IA holds.
	Let $(\x_K, \z_K)$ be a minimum of approximation problem in Eq.~\eqref{eq:upper_phiK}, i.e.,
	\begin{equation*}
		\phi_K(\x_K, \z_K) \le \phi_K(\x, \z), \quad \forall \x \in \X, \z \in \mathcal{Z}.
	\end{equation*}
	Then, any limit point $\bar{x}$ of the sequence $\{x_K\}$ satisfies that $\bar{x}\in\mathrm{argmin}_{x\in X}\varphi(x)$, i.e., $\bar{x}$ is a solution to BLO in Eq.~\eqref{blo_problem}.
\end{theorem}
\begin{proof}%[Proof of Theorem 3.1]
	For any $K > 0$, we define $$i(K) := \mathrm{argmin}_{0 \le k \le K} \| \mathcal{R}_{\alpha}(\x,\y_{k}(\x,\z))\|. $$
	For any limit point $\bar{\x}$ of the sequence $\{\x_K\}$, let $\{\x_{l}\}$ be a subsequence of $\{\x_K\}$ such that $\x_{l} \rightarrow \bar{\x} \in \X$.
	As $\{\y_{i(K)}(\x_K, \z_K)\} \subset \Y$ and $\Y$ is compact, we can find a subsequence $\{\x_{j}\}$ of $\{\x_{l}\}$ satisfying $\y_{i(j)}(\x_j,\z_j) \rightarrow \bar{\y}$ for some $\bar{\y} \in \Y$. It follows from the weak LL convergence property that for any $\epsilon > 0$, there exists $J(\epsilon) > 0$ such that for any $j > J(\epsilon)$, we have
	\[
	\|\mathcal{R}_{{\alpha}}(\x_j,\y_{i(j)}(\x_j,\z_j))\| \le \epsilon.
	\]
	As $\Y$ is a convex compact set, it follows from \cite[Theorem 6.42]{beck2017first} that $\mathtt{Proj}_{\Y}$ is continuous. Combined with the assumed continuity of $\nabla_{\y} f(\x,\y)$, we get that $\mathcal{R}_{\alpha}(\x,\y)$ is continuous.
	Then, by letting $j \rightarrow \infty$, we have
	\[
	\|\mathcal{R}_{{\alpha}}(\bar{\x}, \bar{\y})\| \le \epsilon.
	\]
	As $\epsilon$ is arbitrarily chosen, we have $\|\mathcal{R}_{{\alpha}}(\bar{\x}, \bar{\y})\| \le 0$ and thus $\bar{\y} \in \tilde{S}(\bar{\x})$. Then by using the same arguments as in the proof of \cite[Theorem 3.1]{liu2021towards}, we can get the conclusion.
\end{proof}

Theorem~\ref{Thm1} establishes convergence of the approximation problem in Eq.~\eqref{eq:upper_phiK} to the original BLO in Eq.~\eqref{blo_problem} in terms of optimal solutions.  
In analogy to Theorem~\ref{thm: thm_station}, we next characterize the limiting behavior of stationary points of Eq.~\eqref{eq:upper_phiK}.

\begin{theorem}\label{theorem4.7}
	Suppose the weak LL property with IA holds.  
	Let $(\x_K,\z_K)$ be an $\epsilon_K$-stationary point of the approximation problem in Eq.~\eqref{eq:upper_phiK}, with $\epsilon_K \to 0$, i.e.,
	\begin{equation*}
		\xi_K \in \partial \phi_K(\x_K, \z_K) + \mathcal{N}_{\X \times \mathcal{Z}}(\x_K, \z_K), \qquad \|\xi_K\| \le \epsilon_K.
	\end{equation*}
	Then, for any limit point $(\bar{\x},\bar{\z})$ of the sequence $\{(\x_K,\z_K)\}$, if $\phi_K$ is uniformly prox-regular at $(\bar{\x},\bar{\z})$ and there exists $\delta > 0$ such that $	\inf_{\y\in \tilde{\S}(\bar{\x}) \cap \mathbb{B}(\bar{\z};\delta)} F(\bar{\x},\y) = \inf_{\y\in \tilde{\S}(\bar{\x})} F(\bar{\x},\y)$,
	where $\mathbb{B}(\bar{\z};\delta):= \{\z : \|\z - \bar{\z}\| \le \delta\}$,  
	then
	\[
	0 \in \partial \tilde{\varphi}(\bar{\x}) + \mathcal{N}_{\X}(\bar{\x}),
	\]
	where $\tilde{\varphi}(\x):= \inf_{\y\in \tilde{\S}(\x)\cap \mathbb{B}(\bar{\z};\epsilon)} F(\x,\y)$ for some $\epsilon > 0$.  
	That is, $\bar{\x}$ is a stationary point of the problem $\min_{\x \in \X} \tilde{\varphi}(\x)$.
\end{theorem}
\begin{proof}
	For any $K > 0$, define
	\[
	i(K) := \mathrm{argmin}_{0 \le k \le K} \| \mathcal{R}_{\alpha}(\x,\y_{k}(\x,\z))\|.
	\]
	Let $\{(\x_{l}, \z_{l})\}$ be a subsequence converging to $(\bar{\x}, \bar{\z})$. Following the same arguments as in the proof of Theorem \ref{Thm1}, we can extract a further subsequence, re-indexed by $j$, such that $(\x_j, \z_j) \to (\bar{\x}, \bar{\z})$ and $\y_{i(j)}(\x_j,\z_j)\to \bar{\y}$, where $\bar{\y} \in \tilde{S}(\bar{\x})$. 	Following the same reasoning as in \cite[Theorem 3.1]{liu2021towards}, we deduce
	\begin{equation}\label{eq12}
	\inf_{\y \in \tilde{\S}(\bar{\x})} F(\bar{\x},\y) 
	\le \liminf_{j \to \infty} \phi_j(\x_j,\z_j).
	\end{equation}
	Next, since $(\x_K,\z_K)$ is an $\epsilon_K$-stationary point of $\min_{\x\in\X, \z\in \mathcal{Z}} \phi_K(\x,\z)$ and $\phi_K$ is uniformly prox-regular at $(\bar{\x},\bar{\z})$, there exist $\epsilon>0$ and $\rho \ge 0$ such that for sufficiently large $j$,
	\[
	\begin{aligned}
		\phi_j(\x,\z) \ge &\, \phi_j(\x_j, \z_j) + \langle \xi_j, (\x,\z)-(\x_j, \z_j) \rangle \\
		&- \tfrac{\rho}{2}\|(\x,\z)-(\x_j, \z_j)\|^2,
	\end{aligned}
	\]
	for all $(\x,\z) \in \X \times \mathcal{Z}$ with $\|(\x,\z)-(\bar{\x},\bar{\z})\| \le \epsilon$.  
	Fix $\x \in \X$ with $\|\x - \bar{\x}\| \le \epsilon/2$, and take $\y \in \tilde{\S}(\x)$ with $\|\y - \bar{\z}\| \le \epsilon/2$.  
	Since $\y_k(\x,\y) = \y$ for all $k$, we have $\phi_j(\x,\y) = F(\x,\y)$. Substituting $\z = \y$ and passing to the limit $j \to \infty$, combining with \eqref{eq12} and using $\xi_j \to 0$, we obtain
	\[
	F(\x,\y) \ge \inf_{\y\in \tilde{\S}(\bar{\x})} F(\bar{\x},\y) - \tfrac{\rho}{2}\|(\x,\z)-(\bar{\x},\bar{\z})\|^2.
	\]
	Taking the infimum over $\y \in \tilde{\S}(\x) \cap \mathbb{B}(\bar{\z};\epsilon/2)$, and letting
$
	\tilde{\varphi}(\x) := \inf_{\y \in \tilde{\S}(\x) \cap \mathbb{B}(\bar{\z};\epsilon/2)} F(\x,\y),
$
	together with the assumption $\tilde{\varphi}(\bar{\x}) = \inf_{\y \in \tilde{\S}(\bar{\x})} F(\bar{\x},\y)$ (by potentially reducing $\epsilon$), we deduce
	\[
	\tilde{\varphi}(\x) \ge \tilde{\varphi}(\bar{\x}) - \tfrac{\rho}{2}\|(\x,\z)-(\bar{\x},\bar{\z})\|^2.
	\]
	This implies$
	0 \in \partial \tilde{\varphi}(\bar{\x}) + \mathcal{N}_{\X}(\bar{\x})$,
	and hence $\bar{\x}$ is a stationary point of $\min_{\x \in \X} \tilde{\varphi}(\x)$.
\end{proof}

Next, we will show that when $\mathcal{T}_k$ is chosen as the classical projected gradient method and the accelerated gradient scheme proposed by \cite{ghadimi2016accelerated} in Eq.~\eqref{yk_def3}, the \textbf{weak LL property with IA} is satisfied.
We first consider constructing the iterative module $\mathcal{T}_k(\x,\cdot)$ by the classical projected gradient method as,
\begin{equation}\label{PG2}
	\T_{k+1}\left(\x,\y_k(\mathbf{x},\z)\right) = \mathtt{Proj}_{\Y}\left( \y_k(\mathbf{x},\z) - \alpha\nabla_{\mathbf{y}} f(\mathbf{x},\mathbf{y}_{k}(\mathbf{x},\z)) \right),
\end{equation}
where $\alpha $ denotes the step size parameter, and $k = 0,\ldots, K-1$. 
\begin{theorem}\label{pg_convergence_nonconvex}
	Let $\{\y_k(\mathbf{x},\z)\}$ be the output generated by \eqref{PG2} with
	$\alpha^k_{\y} \in [\underline{\alpha}_{\y},\overline{\alpha}_{\y}] \subset (0,\frac{2}{L_f})$, then we have that the weak LL property with IA holds. 
\end{theorem}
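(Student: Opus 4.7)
The plan is to establish a uniform sufficient-decrease inequality along the projected-gradient trajectory and then telescope it to bound the minimum proximal residual norm. The essential tool is the $L_f$-smoothness of $f(\x,\cdot)$ together with the variational characterization of the projection onto $\Y$.

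First, I would fix any $\alpha$ in the interval $[\underline{\alpha}_\y, \overline{\alpha}_\y]\subset (0, 2/L_f)$ and analyze one step of the iteration. Applying the descent lemma for $L_f$-smooth functions to $f(\x,\cdot)$ between $\y_k(\x,\z)$ and $\y_{k+1}(\x,\z)$, and then using the standard projection inequality
\[
\langle \y_k(\x,\z) - \alpha\nabla_\y f(\x,\y_k(\x,\z)) - \y_{k+1}(\x,\z),\, \y_k(\x,\z) - \y_{k+1}(\x,\z)\rangle \le 0,
\]
I would obtain
\[
f(\x,\y_{k+1}(\x,\z)) \le f(\x,\y_k(\x,\z)) - \Big(\tfrac{1}{\alpha} - \tfrac{L_f}{2}\Big)\|\y_{k+1}(\x,\z) - \y_k(\x,\z)\|^2.
\]
Since $\y_{k+1}(\x,\z) - \y_k(\x,\z) = -\mathcal{R}_{\alpha}(\x,\y_k(\x,\z))$ by definition of the residual map, and the coefficient $\omega := \tfrac{1}{\alpha} - \tfrac{L_f}{2}$ is strictly positive under the step-size condition, this gives a genuine sufficient-decrease inequality in terms of $\|\mathcal{R}_\alpha\|^2$.

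Next, I would telescope the inequality from $k=0$ to $K-1$ to get
\[
\omega\sum_{k=0}^{K-1}\|\mathcal{R}_\alpha(\x,\y_k(\x,\z))\|^2 \le f(\x,\z) - f(\x,\y_K(\x,\z)).
\]
By Assumption \ref{assum1}, $f$ is continuous on the compact set $\X\times\Y$, hence bounded; since $\z\in\mathcal{Z}\subset\Y$ and every iterate lies in $\Y$ (projection onto $\Y$), the right-hand side is bounded by a constant $C$ that is independent of $(\x,\z)$ and $K$. Therefore
\[
\min_{0\le k\le K-1}\|\mathcal{R}_\alpha(\x,\y_k(\x,\z))\|^2 \le \frac{C}{\omega K},
\]
uniformly in $(\x,\z)\in\X\times\mathcal{Z}$, which is exactly the quantitative form of the weak LL property with IA. Uniform boundedness of $\{\y_K(\x,\z)\}$ is immediate because each $\y_K(\x,\z)\in\Y$ and $\Y$ is compact.

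The only subtle point, and the main obstacle, is ensuring that the constant $C$ in the decrease bound can be chosen uniformly in $(\x,\z)$. This is resolved by the compactness of $\X\times\Y$ together with continuity of $f$: one gets a single finite upper bound for $\sup_{\x\in\X,\y\in\Y} f(\x,\y) - \inf_{\x\in\X,\y\in\Y} f(\x,\y)$. If the step size is allowed to vary with $k$ within $[\underline{\alpha}_\y,\overline{\alpha}_\y]$, the same argument goes through with $\omega$ replaced by the uniform lower bound $\min\{\tfrac{1}{\overline{\alpha}_\y}-\tfrac{L_f}{2}, \text{positive}\}$, and the residual can still be evaluated at any fixed $\alpha$ in the admissible interval by relating $\|\mathcal{R}_\alpha\|$ to the step displacement via nonexpansiveness of the projection.
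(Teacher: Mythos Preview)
Your argument is correct and is precisely the content of the lemma the paper invokes: the paper's proof simply cites \cite[Lemma~3.1]{liu2021towards} to obtain the uniform $O(1/\sqrt{K})$ bound on $\min_{0\le k\le K}\|\mathcal{R}_{\underline{\alpha}_{\y}}(\x,\y_k(\x,\z))\|$, whereas you supply the descent-lemma-plus-telescoping derivation in full, with uniformity coming from compactness of $\X\times\Y$. For the varying-step-size case, the clean way to pin the residual at a single $\alpha$ is the monotonicity $\|\mathcal{R}_{\underline{\alpha}_{\y}}(\x,\y_k)\|\le\|\mathcal{R}_{\alpha_k}(\x,\y_k)\|=\|\y_{k+1}-\y_k\|$ (e.g., \cite[Theorem~10.9]{beck2017first}), which is what your closing remark about nonexpansiveness is gesturing at.
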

\begin{proof}
	It follows from \cite[Lemma 3.1]{liu2021towards} that there exists $C_f > 0$ such that
	\begin{equation*}
		\min_{0 \le k \le K} \|  \mathcal{R}_{\underline{\alpha}_{\y}} (\x,\y_{k}(\x,\z)) \| \le \frac{C_f}{\sqrt{K+1}}, \quad \forall \x \in \X, \z \in \Y.
	\end{equation*}
	Then the conclusion follows immediately.
\end{proof}

The iterative module $\mathcal{T}_k(\x,\cdot)$ can also be the accelerated gradient scheme proposed in \cite{ghadimi2016accelerated}, which has been described in Eq.~\eqref{yk_def3}.

\begin{theorem}\label{apg_convergence_nonconvex}
	Let $\{\y_k(\mathbf{x},\z)\}$ be the output generated by Eq.~\eqref{yk_def3}, then we have that the weak LL property with IA	 holds. 
\end{theorem}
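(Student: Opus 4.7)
The plan is to reduce this to the nonconvex convergence rate established by Ghadimi and Lan for their accelerated gradient scheme, and then argue that the resulting rate constant is uniform over $(\x,\z)\in\X\times\mathcal{Z}$. More precisely, the key quantitative ingredient I would invoke is the bound from \cite{ghadimi2016accelerated} (their nonconvex accelerated scheme with the same stepsize choices $\alpha_k = 2/(k+1)$, $\beta_k = 1/(2L_f)$, $\lambda_k = k\beta_k/2$), which, for a fixed $\x$ and initial point $\y_0 = \z$, gives a bound of the form
\begin{equation*}
\min_{1\le k\le K}\bigl\|\mathcal{R}_{\beta_k}(\x,\y_k(\x,\z))\bigr\|^{2}\ \le\ \frac{C\,L_f\bigl(f(\x,\z)-f^{\ast}(\x)\bigr)}{K},
\end{equation*}
for some universal constant $C>0$, where $f^{\ast}(\x)=\min_{\y\in\Y}f(\x,\y)$. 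Here I am using that all the iterates $\u_k,\v_k,\y_k$ produced by Eq.~\eqref{yk_def3} lie in the convex compact set $\Y$ (because $\u_{k+1}$ and $\v_{k+1}$ are projections onto $\Y$, and $\y_{k+1}$ is a convex combination of $\u_k,\v_k\in\Y$), which both justifies the applicability of the bound and gives uniform boundedness of $\{\y_K(\x,\z)\}$ for free.

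Next I would convert this squared-residual bound into the required form in the definition of the weak LL property with IA. Because $\X$ and $\mathcal{Z}\subset\Y$ are compact and $f$ is continuous on $\X\times\mathbb{R}^m$, the function $(\x,\z)\mapsto f(\x,\z)-f^{\ast}(\x)$ is upper semicontinuous and bounded above by a finite constant $M>0$ on $\X\times\mathcal{Z}$ (note that $f^{\ast}(\x)$ is lower semicontinuous on $\X$, which is enough for boundedness from below). Setting $\alpha = \beta_0 = 1/(2L_f)$ and observing that the proximal gradient residual satisfies a standard comparison $\|\mathcal{R}_{\alpha}(\x,\y)\|\le c\,\|\mathcal{R}_{\beta_k}(\x,\y)\|$ up to a factor depending only on $L_f$ and the stepsizes (which is bounded since $\beta_k\equiv 1/(2L_f)$ is constant in $k$ in the chosen scheme), I obtain
\begin{equation*}
\sup_{\x\in\X,\z\in\mathcal{Z}}\,\min_{1\le k\le K}\bigl\|\mathcal{R}_{\alpha}(\x,\y_k(\x,\z))\bigr\|\ \le\ \sqrt{\frac{C\,L_f M}{K}},
\end{equation*}
which tends to $0$ as $K\to\infty$. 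Given any $\epsilon>0$, choosing $k(\epsilon)$ to be the smallest integer exceeding $CL_fM/\epsilon^2$ yields the desired uniform bound, verifying the second part of the weak LL property with IA.

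The main obstacle I expect is not the deterministic per-$\x$ rate, which is essentially copied from \cite{ghadimi2016accelerated}, but rather making the constant in that rate \emph{uniform} in $(\x,\z)\in\X\times\mathcal{Z}$. Two points must be handled with care: (i) the bound in \cite{ghadimi2016accelerated} is stated for a fixed nonconvex objective, so I must verify that its proof only uses $L_f$-smoothness (uniform in $\x$ by Assumption~\ref{assum1}) together with the initial gap $f(\x,\z)-f^{\ast}(\x)$, and then invoke compactness of $\X\times\mathcal{Z}$ and continuity/semicontinuity of the involved objects to bound this gap uniformly; (ii) I must justify that the residual is measured at a stepsize independent of $k$, so that the quantity $\min_{0\le k\le K}\|\mathcal{R}_{\alpha}(\x,\y_k(\x,\z))\|$ appearing in the definition is genuinely controlled by what the Ghadimi--Lan analysis delivers. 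Once both points are addressed, the weak LL property with IA follows, and combining with Theorem~\ref{Thm1} completes the asymptotic convergence picture for $\textrm{AIT}_{NC}$ under the accelerated iterative mapping Eq.~\eqref{yk_def3}.
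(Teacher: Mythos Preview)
Your proposal is correct and follows essentially the same route as the paper: invoke the nonconvex rate from \cite{ghadimi2016accelerated} for the accelerated scheme and then argue uniformity of the constant over $(\x,\z)\in\X\times\mathcal{Z}$ via compactness. The only cosmetic difference is that the paper appeals to \cite[Corollary~2]{ghadimi2016accelerated} in the form that depends on the uniform boundedness of $\v_0(\x,\z)$ and $\S(\x)$ (both contained in the compact set $\Y$), whereas you phrase the constant in terms of the initial function gap $f(\x,\z)-f^{\ast}(\x)$; since $\beta_k\equiv 1/(2L_f)$ is constant, your stepsize-comparison concern is moot and the two formulations are equivalent.
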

\begin{proof}
	Since when $\Y$ is a compact set, $\v_0(\x,\z)$ and $\S(x)$ are both uniformly bounded for any $\x \in \X$. Then,
	it follows from \cite[Corollary 2]{ghadimi2016accelerated} that there exists $C > 0$ such that
	\begin{equation*}
		\min_{0 \le k \le K} \|  \mathcal{R}_{\frac{1}{2L_f}} (\x,\y_{k}(\x,\z)) \| \le \frac{C}{\sqrt{K}}, \quad \forall \x \in \X, \z \in \Y.
	\end{equation*}
	Then the conclusion follows immediately.
\end{proof}

\section{Applications}
As mentioned above, although many popular hierarchical learning and vision tasks have been well understood as the BLO problem in Eq.~\eqref{blo_problem}, almost no solution strategies with high efficiency and accuracy are available for some challenging tasks due to high-dimensional search space and complex theoretical properties. Moreover, many complex deep learning models endowed with nested constrained objectives are still lack of proper reformulation and reliable methodology with grounded theoretical guarantee.

\textbf{Neural Architecture Search}. We first consider one of the most representative  large-scale high-dimensional BLO application, i.e., Neural Architecture Search (NAS). Typically, NAS aims to find high-performance neural network structures with automated process. Here, we focus on the gradient-based differentiable NAS methods, which have been widely investigated recently. This type of methods select certain search strategy to find the optimal network architecture based on the well defined architecture search space. DARTS~\cite {liu2018darts}, ones of the representative differentiable NAS methods, relaxes the discrete search space to be continuous, and jointly optimize the architecture parameter (i.e., the UL variable $\x$) and network weights (i.e., the LL variable $\y$) with gradient descent step, which implies a typical BLO problem. Following the standard NAS setting for classification tasks, the UL and LL objectives are defined as
$F(\mathbf{x}, \mathbf{y}) = \ell_{\mathtt{val}}(\x,\y)$ and $f(\mathbf{x}, \mathbf{y}) = \ell_{\mathtt{tr}}(\x,\y)$, where $\ell_{\mathtt{val}}$ and $\ell_{\mathtt{tr}}$ represent the cross-entropy losses calculated on the validation and training datasets, respectively.

 In details, the searched architecture consists of multiple computation cells, and each cell is a directed acyclic graph including $N$ nodes. The nodes in each cell, denoted as $d$, represent the latent feature map, while the directed edges $o_{(i,j)}, i,j\in[1,N]$ between nodes $d^{i}$ and $d^{j}$ indicate the candidate operations based on defined search space denoted as $\mathcal{O}$. Then $\x$ is supposed to be defined as the whole continuous variables $\x=\{\x_{(i,j)}\}$, where $\x_{(i,j)}$ denotes the vector of the operation mixing weights for nodes pair $(i,j)$ of dimension $\mathcal{O}$. Hence $o_{(i,j)}$ is supposed to be the ideal operation chosen by $\arg\max$ operation, i.e., $o_{(i,j)}=\arg\max_{o\in\mathcal{O}}\x_{{(i,j)}}$. For most learning and vision tasks which employ NAS to optimize the network structure, the LL subproblem usually has high-dimensional search space and complex task-specific constraints. 

As one of the mostly widely used solution strategies, DARTS introduces finite difference approximation with a single-step gradient update of $\y$ to avoid the complex matrix-vector production. On top of that, a series of variations ~\cite{xu2019pc,chen2019progressive,dong2019searching} based on DARTS have been developed and applied in numerous applications. In addition to this approximation operation adopted by DARTS, we could also solve this BLO application with other GBMs such as RHG and CG. Whereas, since the LL variables contain a cumbersome number of parameters from different searching cells, all the above methods suffer from poor theoretical investigation on this non-convex high-dimensional BLO problem. In comparison, by effective augmentation of the iterative trajectory, AIT updates the UL variables with more accurate hyper-gradient, which is more qualified for this BLO problem with non-convex follower.

\begin{figure*}[h!]
	\begin{center}
		\begin{tabular}{c@{\extracolsep{0.1em}}c@{\extracolsep{0.1em}}c@{\extracolsep{0.1em}}c@{\extracolsep{0.1em}}c@{\extracolsep{0.1em}}}
			\includegraphics[height=3.2cm,width=4.2cm,trim=0 0 0 0,clip]{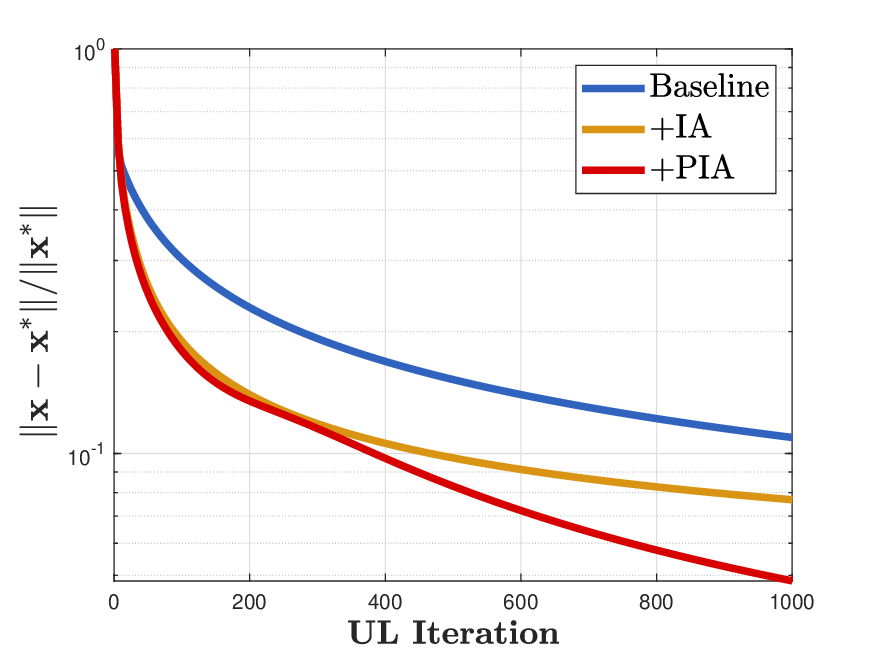} &\includegraphics[height=3.2cm,width=4.2cm,trim=0 0 0 0,clip]{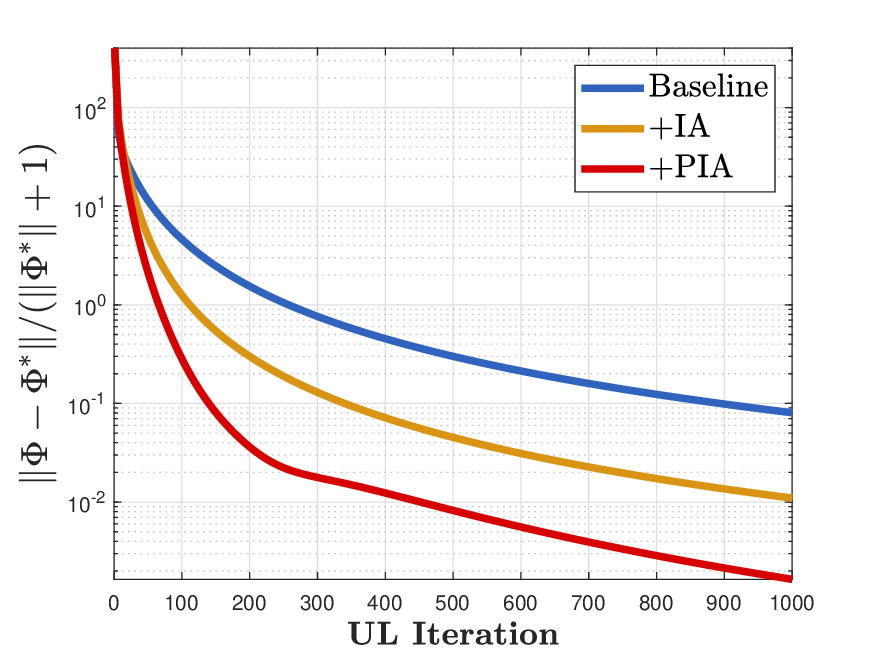} &                 
			
			\includegraphics[height=3.2cm,width=4.2cm,trim=0 0 0 0,clip]{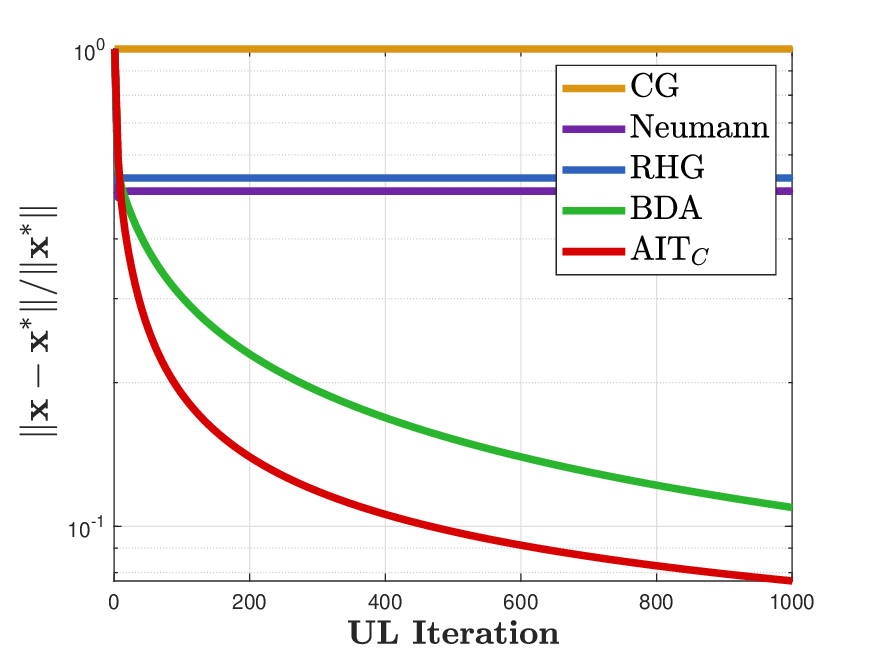} &\includegraphics[height=3.2cm,width=4.2cm,trim=0 0 0 0,clip]{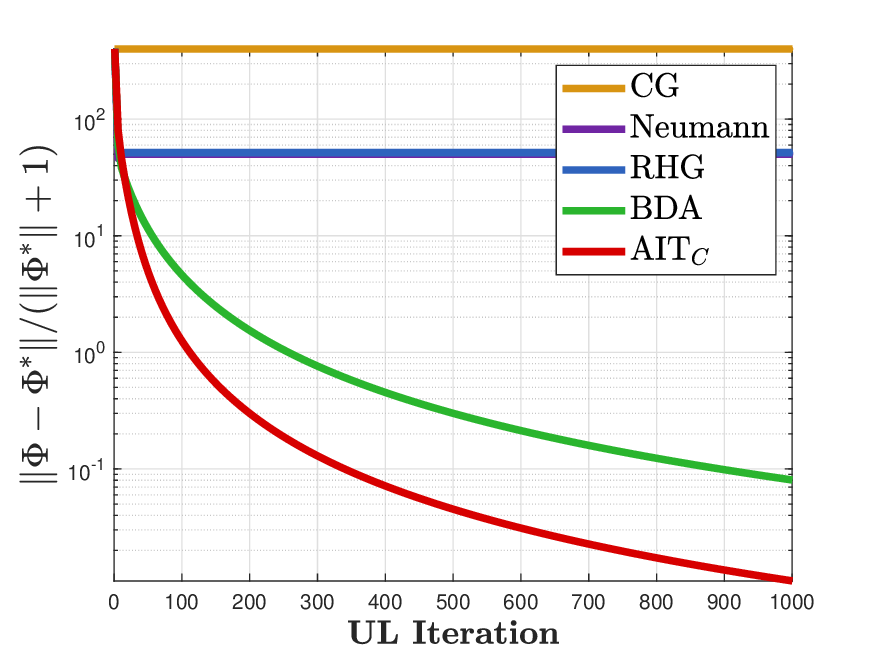}\\
			\footnotesize (a) &\footnotesize (b)	&\footnotesize (c) &\footnotesize (d) \\ 

		\end{tabular}
	\end{center}

	\caption{The first two subfigures illustrate the convergence behavior of $\left\|\Phi-\Phi^{*}\right\|/(\left\|\Phi^{*}\right\|+1)$ and $\Vert \x - \x^*\Vert/ \Vert \x^*\Vert$ for the baseline (i.e., BDA) and variations with techniques from $\textrm{AIT}_{C}$ including IA (i.e., $+\textrm{IA}$) and prior regularization (i.e., $+\textrm{P}$). As for the last two subfigures, we compare the convergence behavior of $\textrm{AIT}_{C}$ and mainstream GBMs including CG, Neumann, RHG, BDA. We choose the same representative initialization point for all the methods as $(\x_{0},\y_{0})=(2\mathbf{e},(2\mathbf{e},2\mathbf{e}))$. }\label{toy_convergence_LLC}
\end{figure*}

\textbf{Generative Adversarial Networks}.  We move one more step to apply our AIT to other challenging tasks with nested constrained learning objectives, i.e., Generative Adversarial Network (GAN), which has been one of the most popular deep learning applications and achieved impressive progress in various computer vision tasks. In general, GAN based model is widely recognized as a minimax game between two adversarial players: the generator network $G(\x;\cdot)$ (parameterized by $\x$) and the discriminator network $D(\y;\cdot)$ (parameterized by $\y$). The generative model $G$ depicts the distribution $P_{G}$, and is supposed to fool $D$ with generated data sampled from $P_{G}$ by optimizing $F$ to minimize the distance between $P_{G}$ and the real data distribution $P_{\mathtt{data}}$. Thus the generic GAN can be defined as  $\min_{\x}\max_{\y}V(\x,\y) =-\log(D(\y;\u))
- \log(1-D(\y;G(\x;\v)))$, where $V(\x,\y)$ denotes a joint loss function to optimize both objectives, $\u$ and $\v$ are sampled data from some real data distribution $P_{\text{data}}$ and generated fake distribution $P_{G}$, respectively.

Typically speaking, $G(\x;\cdot)$ first generates $\v$, and then $D(\y;\cdot)$ corresponds with predicted probability that $\v$ is sampled from $P_{data}$. Based on this, most related methods, e.g.,Vanilla GAN (VGAN)~\cite{goodfellow2014generative}  and Wasserstein GAN (WGAN)~\cite{ArjovskyWGAN}, alternatively train $G$ and $D$, and simply treat them on an equal footing, which ignores the intrinsic leader-follower relationship between $G(\x;\cdot)$ and $D(\y;\cdot)$. In this work, we first refer to the reformulation in~\cite{liu2021investigating} to depict the original GAN as below:
\begin{equation}
	\begin{aligned}
		F(\mathbf{x}, \mathbf{y}) &=\log (D(\y;G(\x;\mathbf{v}))),\\
		f(\mathbf{x}, \mathbf{y}) &=\log D(\y;\mathbf{u}) +\log (1-D(\y;G(\x;\mathbf{v}))).\\ 
	\end{aligned}\nonumber
\end{equation}
Thus the LL objective $f$ targets on enhancing the ability of $D(\y;\cdot)$ to distinguish the fake data generate by $G(\x;\cdot)$, while the UL objective $F$ optimizes $\x$ to generate more ‘‘real’’ data distribution to confuse the discriminator. Under this BLO formulation, most existing methods with alternating training strategies essentially directly compute the hyper-gradient with the final state of trajectory instead of the sequence, which contains more high-order gradient information.  In comparison, our methods admit multiple solutions and non-convexity of the LL subproblem, thus could solve the above problem with self contained theoretical guarantee.

\section{Experiments}
 In this section, we report the quantitative and visualization results of the numerical examples, typical learning and vision applications and more challenging tasks. We run the experiments of toy examples based on a PC with Intel Core i7-8700 CPU, 32GB RAM and implement the real-world BLO applications on a Linux server with NVIDIA GeForce RTX 2080Ti GPU (12GB VRAM). 
\subsection{Numerical Evaluations}\label{numerical_results}

At first, we validate the provided theoretical results of our AIT based on numerical examples with different LL assumptions. More specifically, we focus on the UL variable (i.e., $\x $) and approximative UL subproblem ($\Phi$ is used as unified denotation for $\varphi_{K}(\x)$ in Eq.~\eqref{single_re_reform}, $ \varphi_{K}(\x,\z)$ in Eq.~\eqref{eq:proximal_prior} and $\phi_{\bar{K}}(\x,\z)$ in Eq.~\eqref{eq:upper_phiK} under convex and non-convex scenarios, respectively), and analyze the convergence behavior of $\left\|\Phi-\Phi^{*}\right\|/(\left\|\Phi^{*}\right\|+1)$ ($\left\|\Phi^{*}\right\|$ is equal to $0$ in some cases) and $\Vert \x - \x^*\Vert/ \Vert \x^*\Vert$  during the UL optimization. We use the numerical example under LLC assumption to show the effectiveness of  $\textrm{AIT}_{C}$ with IA and the prior regularization term, and compare the convergence behavior of $\textrm{AIT}_{C}$ with a series of mainstream GBMs. Next, we use another example with the well defined LLS assumption to implement our $\textrm{AIT}_{C}$ with IA and the Nesterov's acceleration dynamics. 

Moving forward to the toy example with LL non-convexity, we first verify the effectiveness of $\textrm{AIT}_{NC}$ with IA, PTT and the acceleration gradient scheme, and compare $\textrm{AIT}_{NC}$ with the above GBMs to show its great improvement. Furthermore, we depict the loss surfaces and the derived solutions to show the significant improvement of $\textrm{AIT}_{NC}$  under the non-convex scenario, and analyze the running time influenced by different factors, including initialization points, dimension of LL variables (i.e., $n$) and LL iterations (i.e, $K$). 

\subsubsection{Convex Scenario}

%Since the numerical performance on nonconvex cases has been verifed
We first consider the LLC scenario and conduct the numerical experiments with the following example from~\cite{liu2020generic}. With $\mathbf{x} \in \mathbb{R}^{n}, [\mathbf{y}]_{1} \in \mathbb{R}^{n} $ and $ [\mathbf{y}]_{2} \in \mathbb{R}^{n}$:
\begin{equation}
	\begin{aligned}
		&\min _{\mathbf{x} \in \mathcal{X}}\|\mathbf{x}-[\mathbf{y}]_{2}\|^{4}+\|[\mathbf{y}]_{1}-\mathbf{e}\|^{4}, \\
		&\text { s.t. }([\mathbf{y}]_{1}, [\mathbf{y}]_{2}) \in \underset{[\mathbf{y}]_{1} \in \mathbb{R}^{n}, [\mathbf{y}]_{2} \in \mathbb{R}^{n}}{\operatorname{argmin}}\frac{1}{2}\|[\mathbf{y}]_{1}\|^{2}-\mathbf{x}^{\top} [\mathbf{y}]_{1},
	\end{aligned}
\end{equation}
where $\mathcal{X}=[-10,10] \times \cdots[-10,10] \subset \mathbb{R}^{n}$, and $\mathbf{e}$ represents the vector of which the elements are all equal to 1.  Note that we use bold text to represent scalars for all the following description. We set $n=20$, $T=1000$ and choose $(\x_{0},\y_{0})=(2\mathbf{e},(2\mathbf{e},2\mathbf{e}))$ as the initialization point. By simple calculation, we can obtain the unique optimal solution as $\mathbf{x}^{*}=\mathbf{e}$, $\mathbf{y}^{*}=(\mathbf{e},\mathbf{e})$. This numerical example satisfies the LLC assumption for BDA while violating the LLS assumption admitted by most GBMs such as RHG, CG~\cite{pedregosa2016hyperparameter} and Neumann~\cite{lorraine2020optimizing}.  

To investigate the effectiveness of Alg.~\ref{alg:AIT-C}, we take BDA as the baseline, and implement our $\textrm{AIT}_{C}$ by incrementally adding IA and the prior regularization. In the first two subfigures of Fig.~\ref{toy_convergence_LLC}, we analyze the convergence behavior of  $\left\|\Phi-\Phi^{*}\right\|/(\left\|\Phi^{*}\right\|+1)$ and $\Vert \x - \x^*\Vert/ \Vert \x^*\Vert$. As it has been proved in~\cite{liu2021investigating}, the convergence property of BDA can be consistently verified and obtain expected convergence results for this BLO problem. As for the augmentation technique, the embedded IA of $\textrm{AIT}_{C}$ helps dynamically adjust the initialization of $[\y]_1$ and $[\y]_2$, and the augmented optimization trajectory further improves the convergence speed of UL variable (i.e., $\x$) and approximative  UL subproblem (i.e., $\Phi$). Besides, with prior regularization  to guide the optimization process of $\z$, our $\textrm{AIT}_{C}$ can obtain higher convergence precision and speed.

In the last two subfigures of Fig.~\ref{toy_convergence_LLC}, we compare our $\textrm{AIT}_{C}$ with series of mainstream GBMs including CG, Neumann, RHG, and BDA. Since the LLS assumptions of CG, Neumann and RHG can not be satisfied, no convergence property for these methods could be guaranteed in this case. In comparison with BDA, $\textrm{AIT}_{C}$ also gains advantage over convergence speed and precision. 
%\subsubsection{Convex Scenario}

Furthermore, to demonstrate the performance of other iterative formats and acceleration dynamics, we further introduce another toy example with more strict LL property, i.e, the well-defined LLS assumption:
\begin{equation}
	\begin{aligned}
		&\min _{\mathbf{x} \in \mathcal{X}}-\mathbf{e}^{T} \mathbf{x}+\|\mathbf{x}-[\mathbf{y}]_{2}\|^{4}+\|[\mathbf{y}]_{1}-\mathbf{e}\|^{4}, \\
		&\text { s.t. }([\mathbf{y}]_{1}, [\mathbf{y}]_{2}) \in \underset{[\mathbf{y}]_{1} \in \mathbb{R}^{n}, [\mathbf{y}]_{2} \in \mathbb{R}^{n}}{\operatorname{argmin}}\|[\mathbf{y}]_{1}+[\mathbf{y}]_{2}-\mathbf{x}\|^{4},
	\end{aligned}
\end{equation}
where $\mathbf{x} \in \mathbb{R}^{n}, [\mathbf{y}]_{1} \in \mathbb{R}^{n} $, $ [\mathbf{y}]_{2} \in \mathbb{R}^{n}$, $\mathcal{X}=[-1,1] \times \cdots \times[-1,1] \subset \mathbb{R}^{n}$. Given any $\x\in \X$, it satisfies $\min _{[\mathbf{y}]_{1} \in \mathbb{R}^{n}, [\mathbf{y}]_{2} \in \mathbb{R}^{n}}\|[\mathbf{y}]_{1}+[\mathbf{y}]_{2}-\mathbf{x}\|^{4}=0$, then the unique optimal solution of this BLO problem is $\x^{*}=\mathbf{e}$,  $\mathbf{y}^{*}=(\frac{1}{2}\mathbf{e},\frac{1}{2}\mathbf{e})$. Since the solution set of this LL subproblem (i.e., $\mathcal{S}(\x)$) is a singleton, we can verify that the above example can be properly solved with GBMs under LLS assumption.

\begin{figure}[htbp]
	\begin{center}
		\begin{tabular}{c@{\extracolsep{0.1em}}c@{\extracolsep{0.1em}}}
			\includegraphics[height=3.2cm,width=4.2cm,trim= 0 0 0 0,clip]{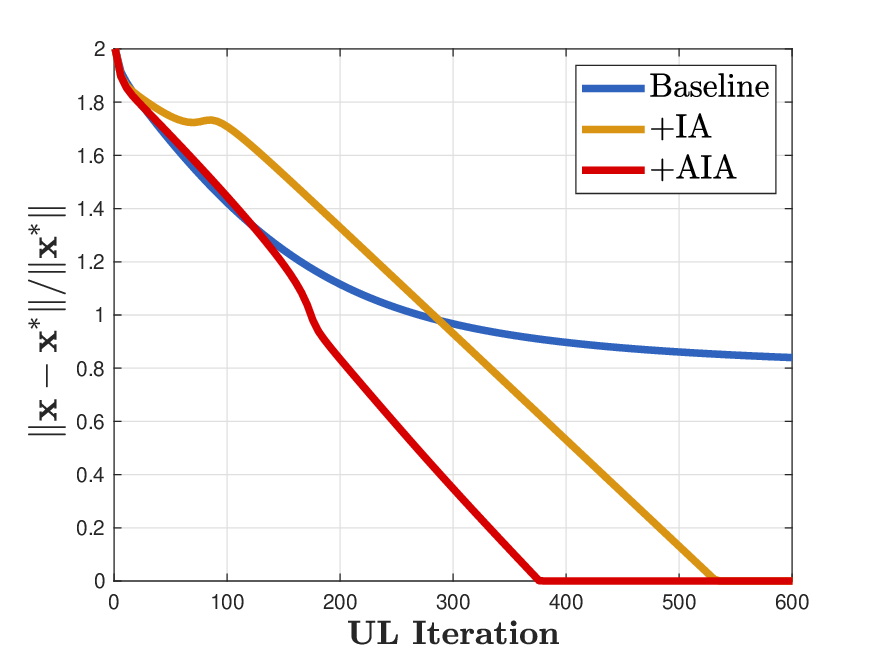}&\includegraphics[height=3.2cm,width=4.2cm,trim= 0 0 0 0,clip]{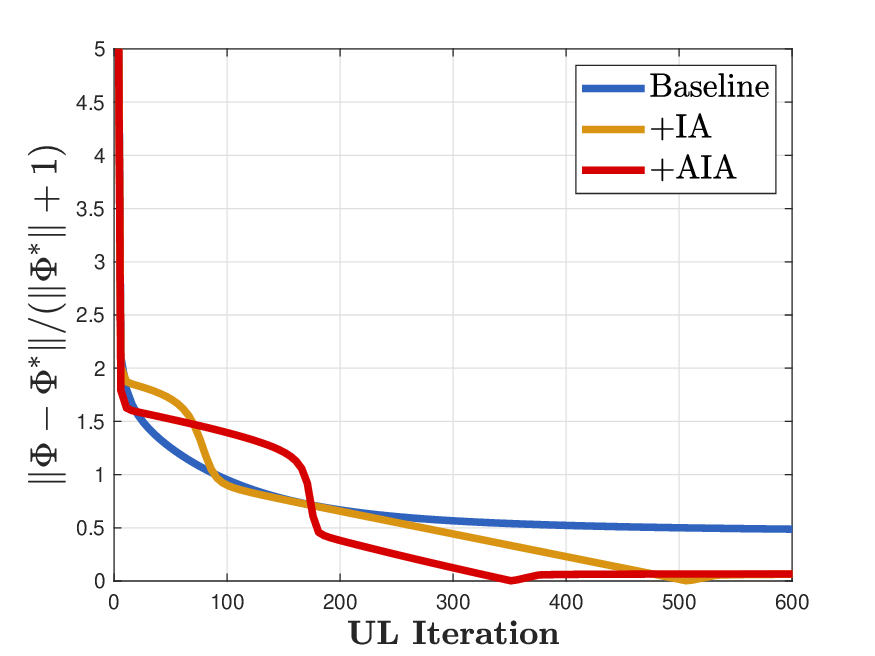}\\
			\footnotesize (a) & \footnotesize (b) \\
		\end{tabular}
	\end{center}
	\caption{Illustrating the convergence curve of $\left\|\Phi-\Phi^{*}\right\|/(\left\|\Phi^{*}\right\|+1)$ and $\Vert \x - \x^*\Vert/ \Vert \x^*\Vert$ for the baseline (i.e., RHG) and variations with techniques of $\textrm{AIT}_{C}$, including IA (i.e., $+\textrm{IA}$) and Nesterov's acceleration dynamics (i.e., $+\textrm{A}$) during the LL optimization. The initialization point is chosen as $(\x_{0},\y_{0})=(-\mathbf{e},(-\mathbf{e},-2\mathbf{e}))$.}\label{toy_convergence_LLS}
\end{figure}

In this case, we implement our $\textrm{AIT}_{C}$ with RHG as the baseline, and further incorporate the Nesterov's acceleration dynamics to facilitate the convergence behavior. When an appropriate initialization position is chosen, commonly used GBMs are also supposed to converge with acceptable speed. To demonstrate the effectiveness of IA, we set the initialization point as $(\x_{0},\y_{0})=(-\mathbf{e},(-\mathbf{e},-2\mathbf{e}))$, where $\z$ is further away from the optimal solution $\z^{*}$. In Fig~\ref{toy_convergence_LLS}, it can be seen that the IA technique could effectively accelerate the convergence of UL variables towards the global optimal solution even when RHG converges slower due to improper initialization of LL variable. In addition, the Nesterov's strategy further helps the LL variable (i.e., $\y$) converge faster with embedded acceleration dynamics.

\subsubsection{Non-Convex Scenario}

\begin{figure*}[h!]
	\begin{center}
		\begin{tabular}{c@{\extracolsep{0.1em}}c@{\extracolsep{0.1em}}c@{\extracolsep{0.1em}}c@{\extracolsep{0.1em}}c@{\extracolsep{0.1em}}}
			\includegraphics[height=3.2cm,width=4.2cm,trim=0 0 0 0,clip]{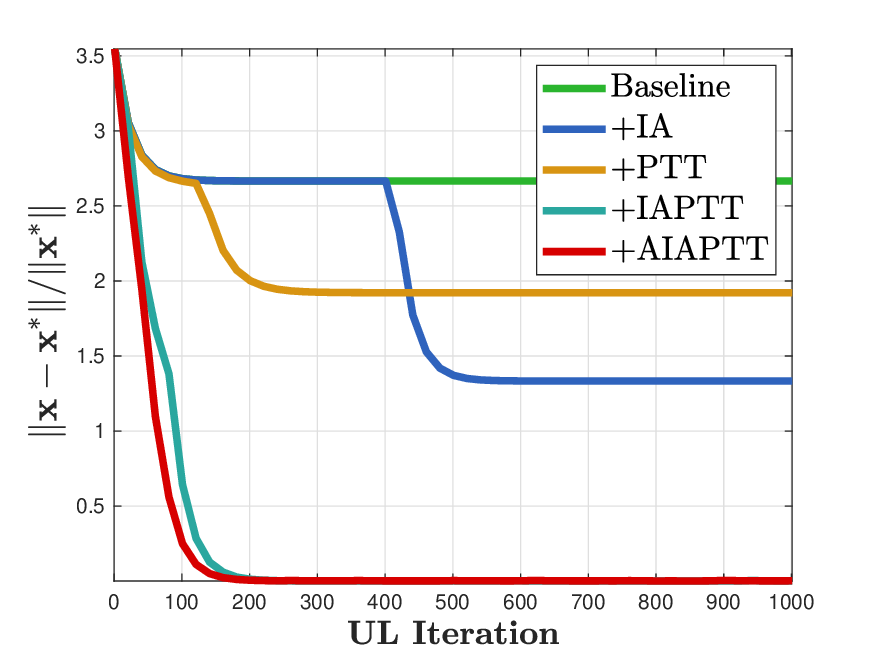} &\includegraphics[height=3.2cm,width=4.2cm,trim=0 0 0 0,clip]{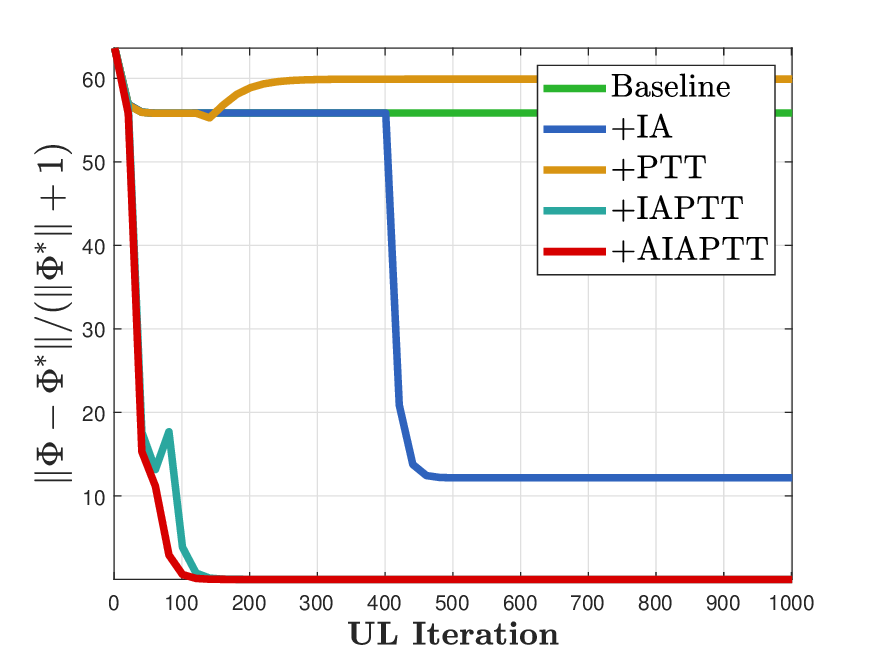} &                 
			
			\includegraphics[height=3.2cm,width=4.2cm,trim=0 0 0 0,clip]{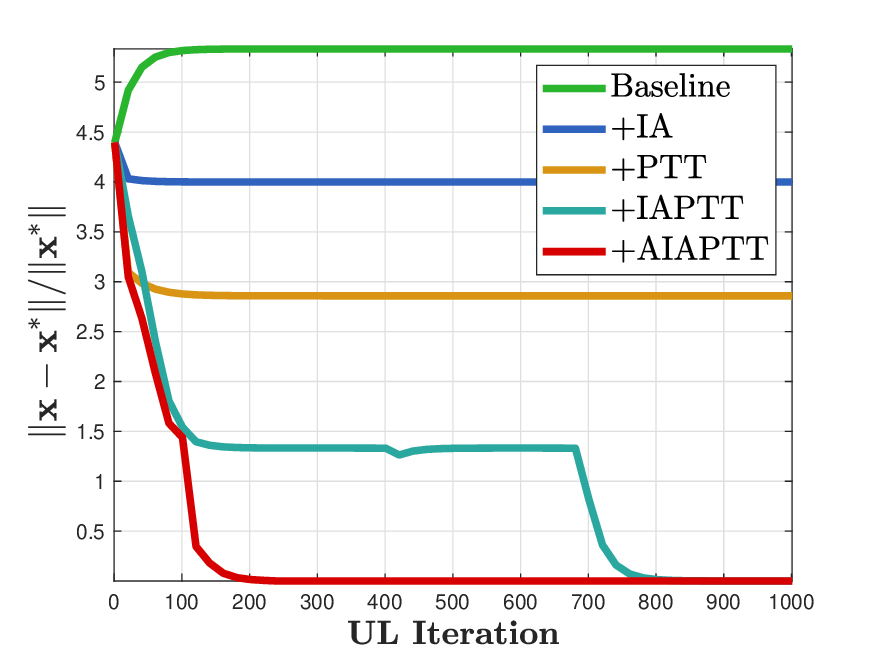} &\includegraphics[height=3.2cm,width=4.2cm,trim=0 0 0 0,clip]{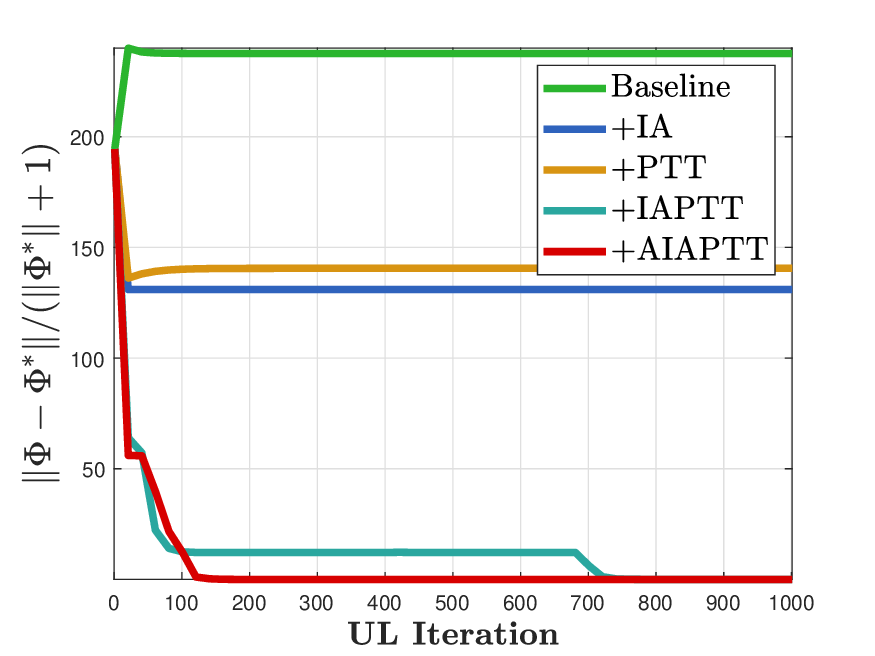}\\
			\multicolumn{2}{c}{\footnotesize(a)~$(\x_{0},\y_{0})=(-6,0)$} & 
			\multicolumn{2}{c}{\footnotesize(b)~$(\x_{0},\y_{0})=(-8,-8)$}\\
		\end{tabular}
	\end{center}
	%\vspace{-0.4cm}
	\caption{We plot the convergence behavior of $\left\|\Phi-\Phi^{*}\right\|/(\left\|\Phi^{*}\right\|+1)$ and $\Vert \x - \x^*\Vert/ \Vert \x^*\Vert$ for the baseline (i.e., RHG) and the variation with IA (i.e., $+\textrm{IA}$), PTT (i.e., $+\textrm{PTT}$) and the acceleration gradient scheme (i.e., $+\textrm{A}$).}\label{toy_convergence_nonconvex_1}
\end{figure*}

\begin{figure*}[h!]
	\begin{center}
		\begin{tabular}{c@{\extracolsep{0.1em}}c@{\extracolsep{0.1em}}c@{\extracolsep{0.1em}}c@{\extracolsep{0.1em}}c@{\extracolsep{0.1em}}}
			\includegraphics[height=3.2cm,width=4.2cm,trim=0 0 0 0,clip]{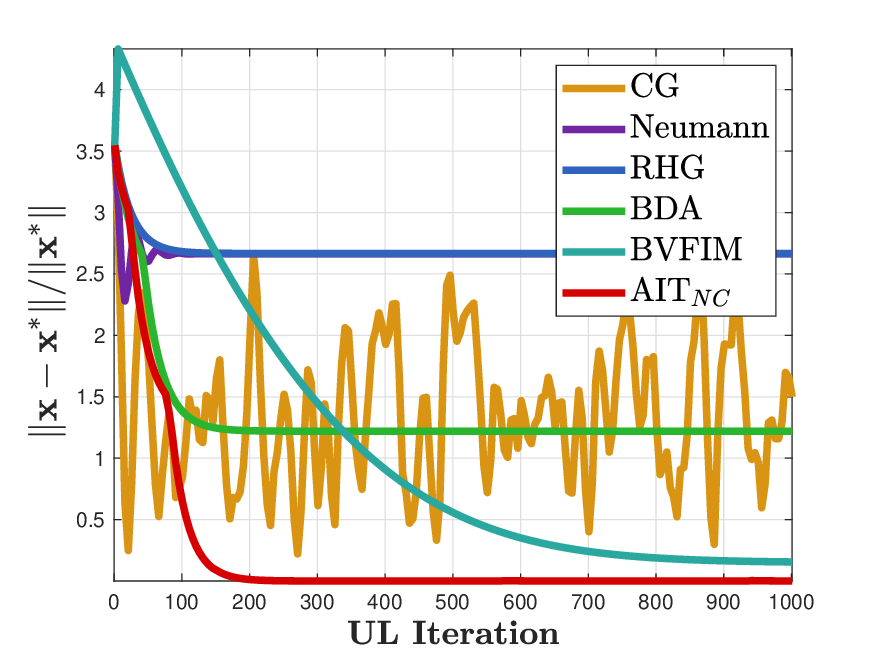} &\includegraphics[height=3.2cm,width=4.2cm,trim=0 0 0 0,clip]{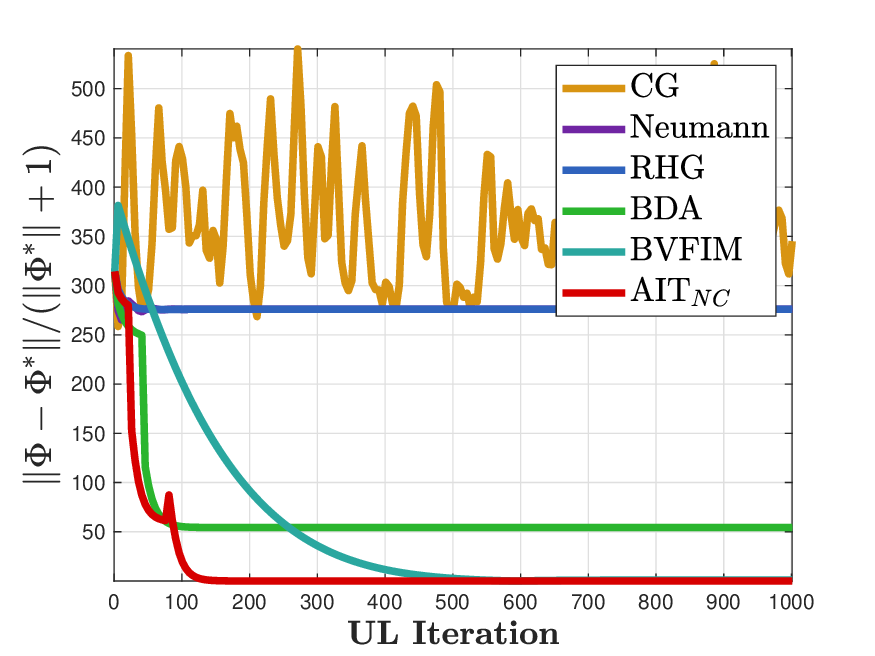} && \includegraphics[height=3.2cm,width=4.2cm,trim=0 0 0 0,clip]{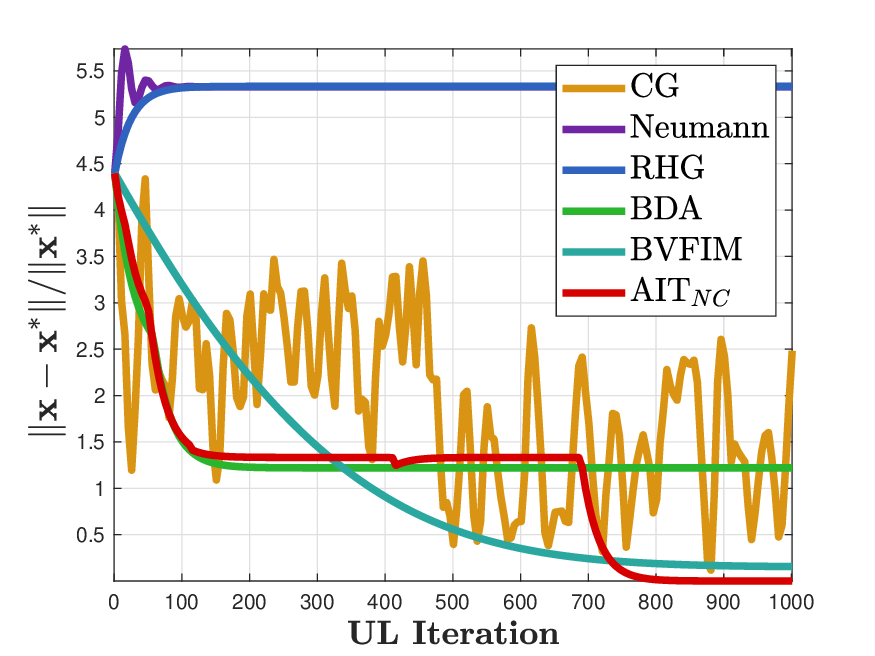} &\includegraphics[height=3.2cm,width=4.2cm,trim=0 0 0 0,clip]{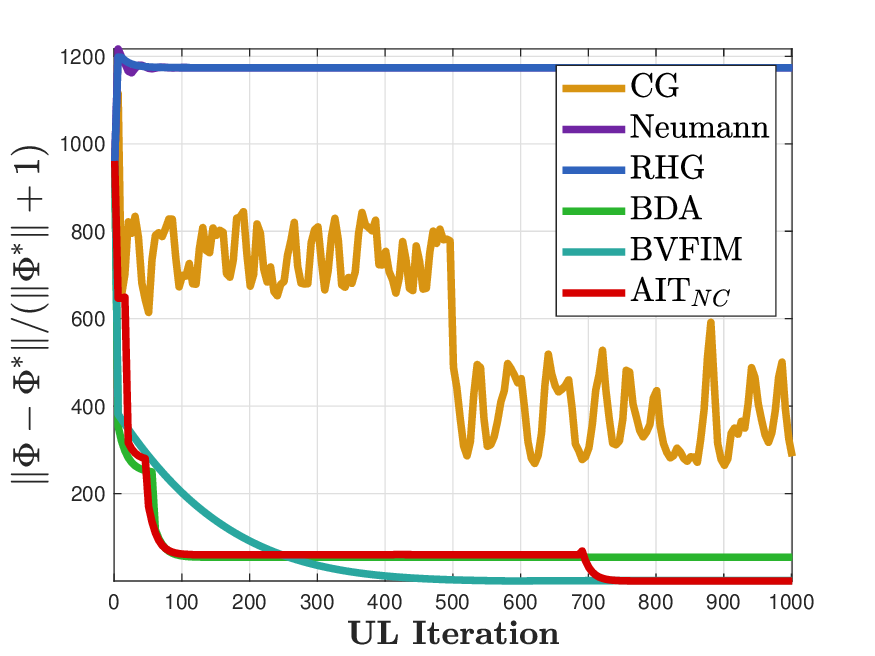}\\
			\multicolumn{2}{c}{\footnotesize(a)~$(\x_{0},\y_{0})=(-6,0)$} && 
			\multicolumn{2}{c}{\footnotesize(b)~$(\x_{0},\y_{0})=(-8,-8)$}\\
		\end{tabular}
	\end{center}
	%\vspace{-0.4cm}
	\caption{In this figure, we compare the convergence behavior of $\left\|\Phi-\Phi^{*}\right\|/(\left\|\Phi^{*}\right\|+1)$ and $\Vert \x - \x^*\Vert/ \Vert \x^*\Vert$ for our $\textrm{AIT}_{NC}$ and series of mainstream GBMs including CG, Neumann, RHG, BDA and BVFIM.}\label{toy_convergence_nonconvex_3}
\end{figure*}

To validate the superiority of AIT, we move on to more challenging scenario and introduce another toy example~\cite{liu2021value} with non-convex LL subproblem, which takes the form of:
\begin{equation}
	\begin{aligned}
		&\min _{\mathbf{x} \in \mathbb{R}, \mathbf{y} \in \mathbb{R}^{n}}\|\mathbf{x}-a\|^{2}+\|\mathbf{y}-a-\mathbf{c}\|^{2}, \\
		&\text { s.t. }[\mathbf{y}]_{i} \in \underset{[\mathbf{y}]_{i} \in \mathbb{R}}{\operatorname{argmin}} \sin \left(\mathbf{x}+[\mathbf{y}]_{i}-[\mathbf{c}]_{i}\right), \forall i,
	\end{aligned}
\end{equation}
where both $a\in\mathbb{R}$ and $\c\in\mathbb{R}^{n}$ are constants, and $[\cdot]_{i}$ denotes the i-th element of the vector. Then the optimal solution of this problem can be calculated in the following form:
$$
\mathbf{x}=\frac{(1-n) a+n C}{1+n}, \text { and }[\mathbf{y}]_{i}=C+[\mathbf{c}]_{i}-\mathbf{x}, \forall i, 
$$
where $$
C=\underset{C}{\operatorname{argmin}}\left\{\|C-2 a\|: C=-\frac{\pi}{2}+2 k \pi, k \in \mathbb{Z}\right\},
$$ and the corresponding value of UL objective is $
F^{*}=\frac{n(C-2 a)^{2}}{1+n}
$. Here we set $a=2$, $[\c]_{i}=2, i=1,...,n$ and $[\y]_{i}\in[-10, 10]$. It can be easily verified that this example satisfies the theoretical assumption of $\textrm{AIT}_{NC}$ while violating the LLS assumption for RHG, CG, Neumann and the LLC assumption for BDA due to existence of multiple local optimal solutions of the LL subproblem.  At first, we set $n=1$ to investigate the convergence property based on the simplified form and draw the loss surfaces.  Then we use larger $n$ when analyzing the influence of dimension of LL variables on the running time. We can calculate the corresponding optimal solution as $(\x^{*},\y^{*})=(3\pi/4, 3\pi/4 + 2)$. In consideration of distance between the initialization position and the global optimal solution, we choose two initialization points for $\x$ and $\y$ including $(\x_{0},\y_{0})=(-6, 0)$ and $(\x_{0},\y_{0})=(-8,-8)$. 

\begin{figure}[!h]
	\begin{center}
		\renewcommand\arraystretch{0.1}
		\begin{tabular}{c@{\extracolsep{0.1em}}c@{\extracolsep{0.1em}}c@{\extracolsep{0.1em}}}
			\multicolumn{3}{c}{\includegraphics[height=0.75cm,width=9cm,trim=20 250 30. 0,clip]{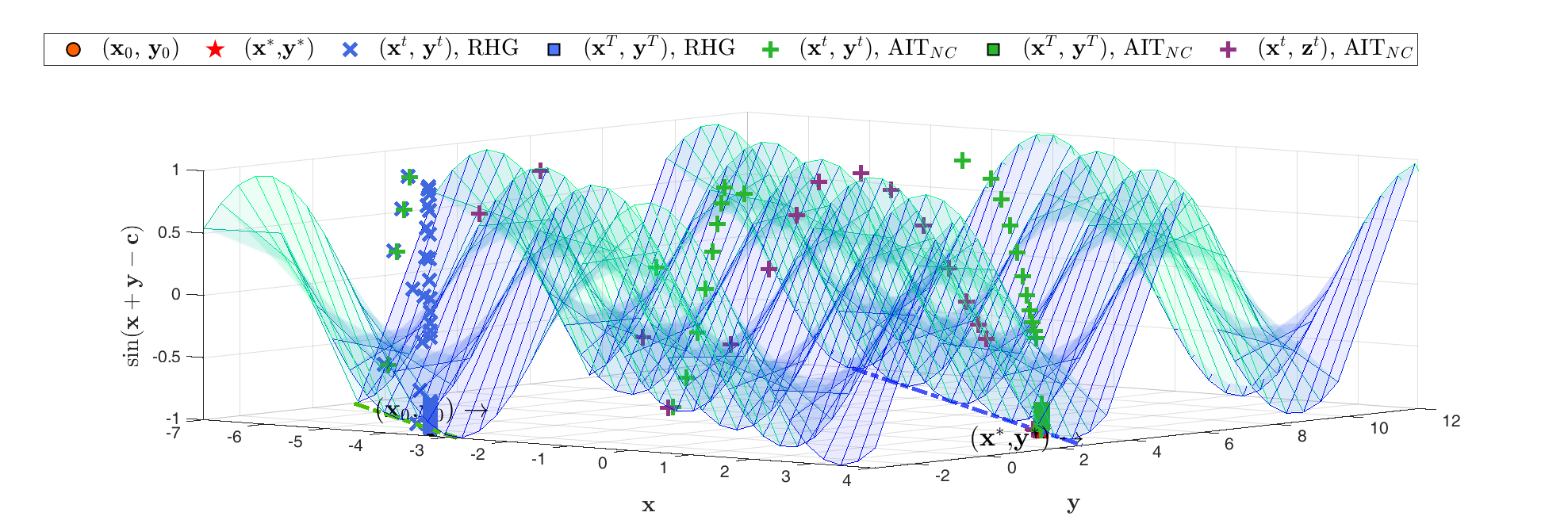}}\\
			\specialrule{0em}{-4pt}{-4pt}
			\multirow{3}{*}{\includegraphics[height=6.3cm,width=4.2cm,trim=0 28 20 10,clip]{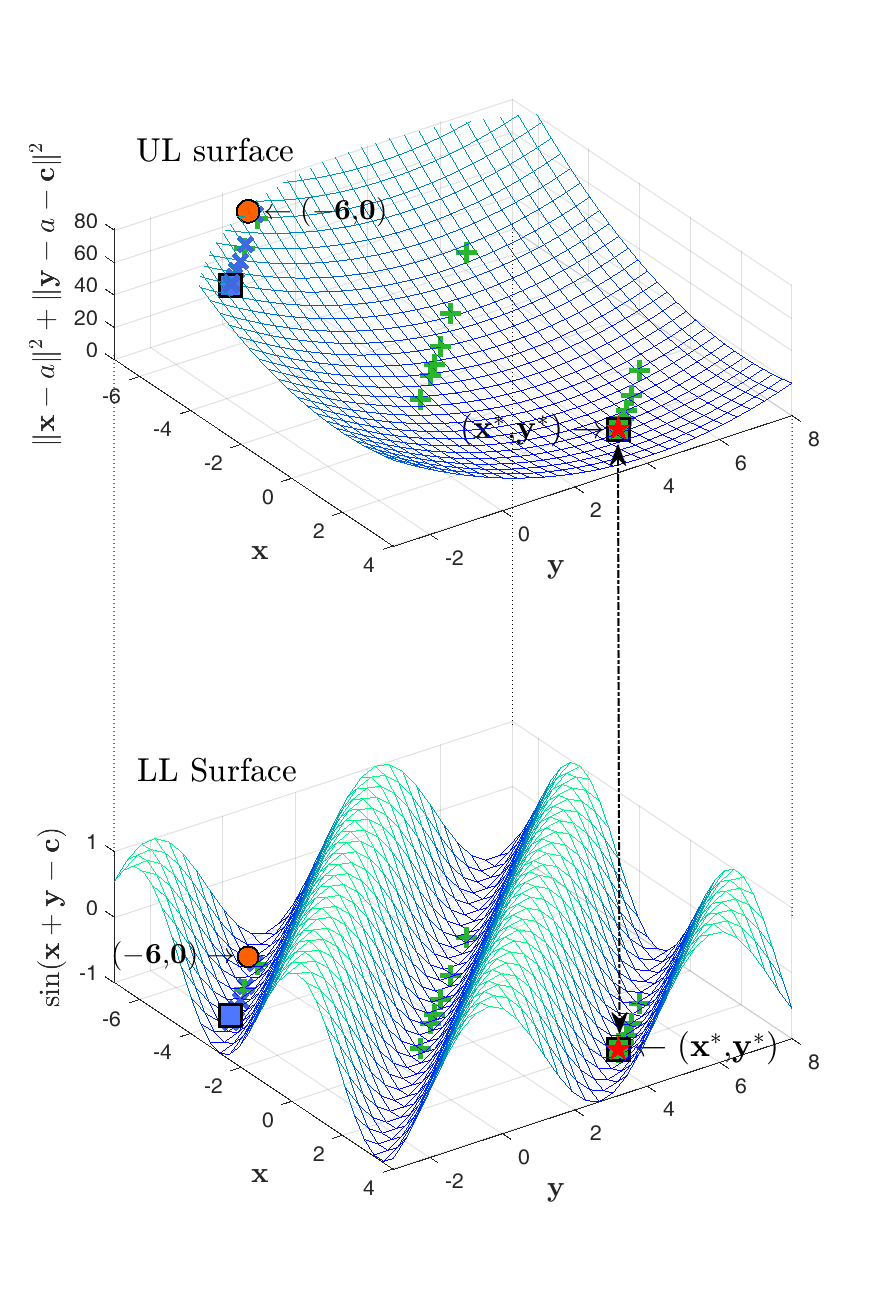}}&& \\
			&&	\includegraphics[height=2.9cm,width=4.2cm,trim= 0 0 0. 0,clip]{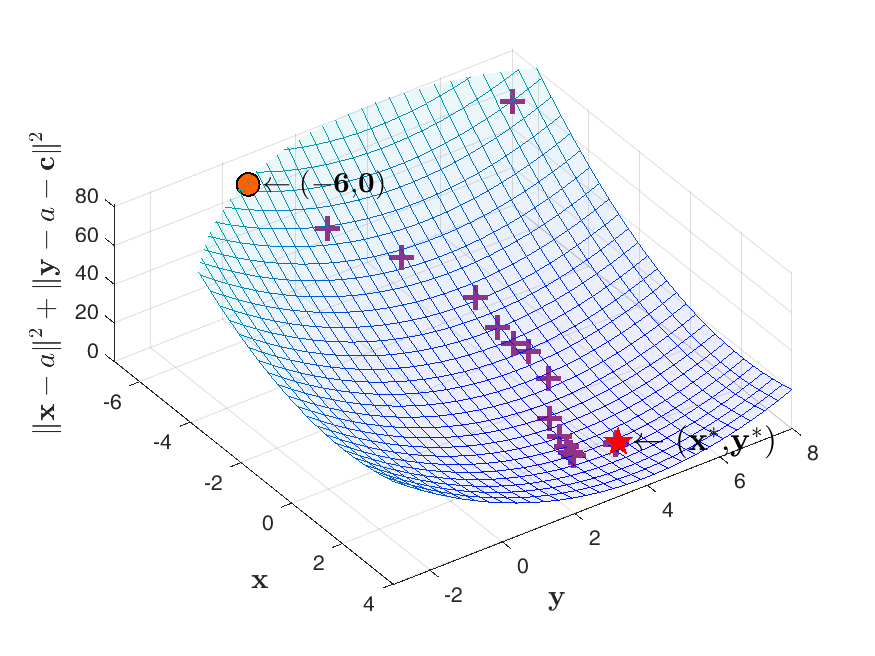}\\
			\specialrule{0em}{0pt}{0pt}
			&&\footnotesize (b) UL Trajectory\\
			&&\includegraphics[height=3cm,width=4.2cm,trim= 0 0 0 0,clip]{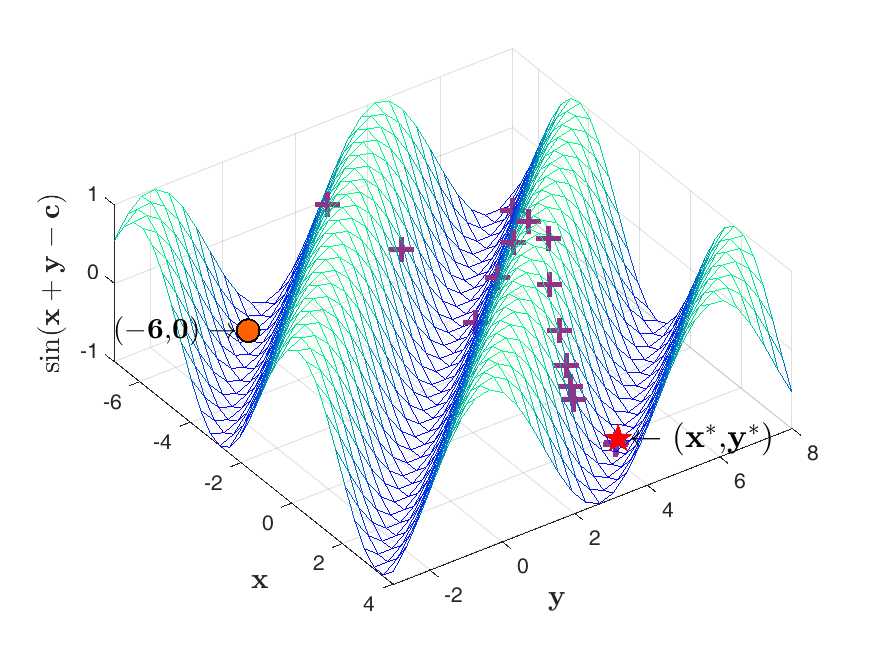}\\
			\specialrule{0em}{0pt}{0pt}
			\footnotesize (a) Loss Surface &&
			\footnotesize (c) LL Trajectory\\
		\end{tabular}
	\end{center}
	\caption{In the left subfigure, we visualize the loss surfaces of the UL and LL subproblems and compare the optimization trajectories of RHG and $\textrm{AIT}_{NC}$, both initialized at the same point $(\x_{0},\y_{0}) = (-6, 0)$. In the two subfigures on the right, we further illustrate the optimization process of $\textrm{AIT}_{NC}$ with IA and PTT by plotting the trajectory of $(\x^{t}, \z^{t})$ on both UL and LL surfaces during the UL update. }\label{toy_convergence_nonconvex_2}
\end{figure}

In Fig.~\ref{toy_convergence_nonconvex_1}, we take RHG as the baseline, and start by incrementally embedding two augmentation techniques and the acceleration gradient scheme of $\textrm{AIT}_{NC}$ into the baseline. It can be seen that adding PTT technique shows consistently better convergence results with respective of $\x$, while the convergence of $\phi_{\bar{K}}(\x,\z)$ may be influenced by different initialization points without dynamically optimized $\z$. When we implement our $\textrm{AIT}_{NC}$ with both techniques, since the theoretical convergence property is guaranteed, our algorithm can uniformly improve the convergence behavior of $\phi_{\bar{K}}(\mathbf{x}, \mathbf{z})$ and $\x$ on this non-convex problem. Furthermore, by embedding the acceleration gradient scheme in Eq.~\eqref{yk_def3} under our $\textrm{AIT}_{NC}$, the convergence rate of gradient descent for this non-convex BLO problem could be further improved.  

Furthermore, In Fig.~\ref{toy_convergence_nonconvex_3}, we compare $\textrm{AIT}_{NC}$ with mainstream GBMs, including CG, Neumann, RHG, BDA and BVFIM with the same initialization points. As it is expected, all the above methods except BVFIM can be easily stuck at the local optimal solution. Besides, BVFIM may not converge exactly to the true optimal solution influenced by its sensitivity to the added penalty terms. In comparison, under our $\textrm{AIT}_{NC}$ algorithm, by dynamically adjusting the initial value of LL variables and truncation of $\mathcal{T}$ for computation of hyper-gradients, the convergence results without LLC assumption can be consistently verified. 
\begin{figure}[h!]
	\begin{center}
		\begin{tabular}{c@{\extracolsep{0.1em}}c@{\extracolsep{0.1em}}c@{\extracolsep{0.1em}}}
			& \includegraphics[height=3.2cm,width=4.2cm,trim=0 0 0 0,clip]{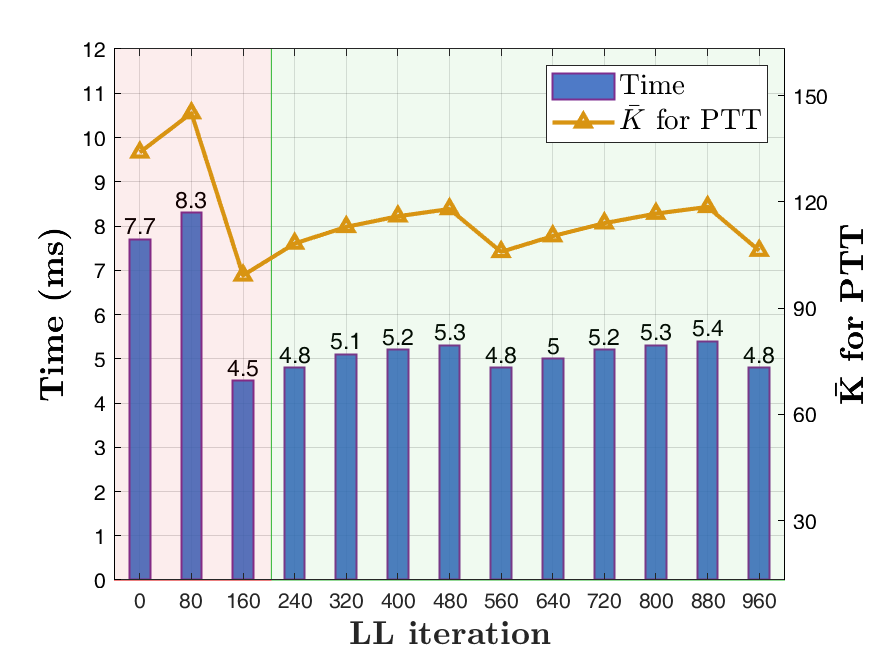} &\includegraphics[height=3.2cm,width=4.2cm,trim=0 0 0 0,clip]{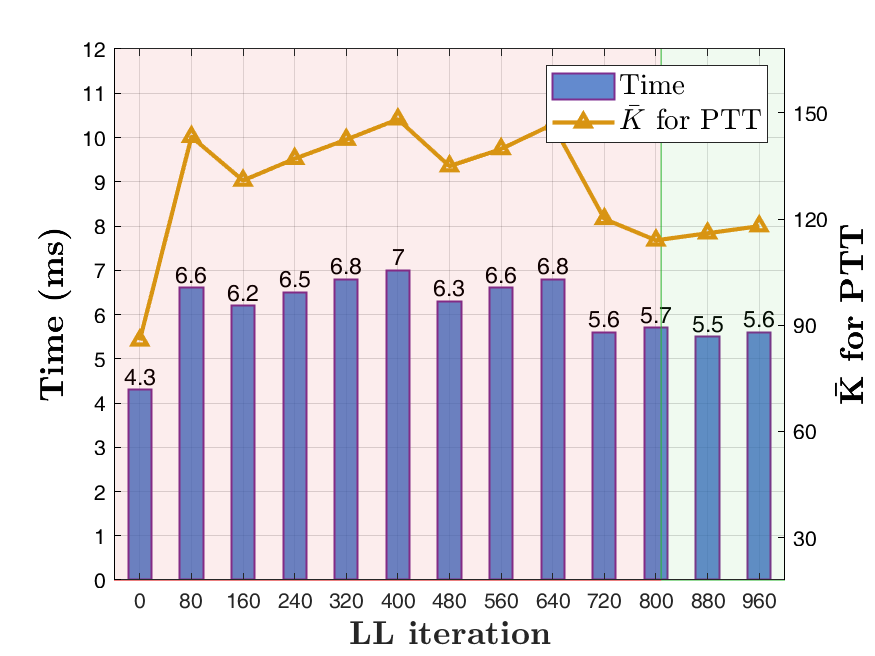}\\
			& \footnotesize (a)~$(\x_{0},\y_{0})=(-6,0)$ &\footnotesize (b)~$(\x_{0},\y_{0})=(-8,-8)$ \\
			
		\end{tabular}
	\end{center}
	%\vspace{-0.4cm}
	\caption{In the two subfigures, we report average $\bar{K}$ and running time per iteration of our $\textrm{AIT}_{NC}$ during the LL optimization with different initialization points. Note that the part with green background represents that the convergence curve comes to stabilize ($\Vert \x - \x^*\Vert <1\times e^{-3}$). }\label{toy_convergence_nonconvex_4}
\end{figure}

More vividly, in Fig.~\ref{toy_convergence_nonconvex_2}, we mark the initialization point (i.e.,~$(\x_{0},\y_{0})$), optimal solution (i.e.,~$(\x^{*},\y^{*})$), iterative solution (i.e.,~$(\x^{t},\y^{t})$), and convergence solution (i.e.,~$(\x^{T},\y^{T})$) for both RHG and $\textrm{AIT}_{NC}$ on the UL and LL loss surfaces. As confirmed by the convergence analysis, due to the existence of many local optima along the path from $(\x_{0},\y_{0})$ to $(\x^{*},\y^{*})$, RHG's iterative trajectory tends to get trapped in suboptimal regions, ultimately leading to a solution far from the global optimum.  For $\textrm{AIT}_{NC}$, the cross-shaped markers indicate the initialization points $(\x^t, \z^t)$ produced by IA, which guide the trajectory across local valleys and ridges toward the global optimum. With the help of $\z^t$ and the truncated trajectory length $\bar{K}$, $\textrm{AIT}_{NC}$ yields a better estimation of $\y_{\bar{K}}(\x^t,\z^t)$ and thus a more accurate hyper-gradient. Consequently, the resulting $(\x^t, \y^t)$ avoids poor local minima and successfully converges under the non-convex landscape, demonstrating the effectiveness of $\textrm{AIT}_{NC}$.

\begin{figure}[h!]
	\begin{center}
		\begin{tabular}{c@{\extracolsep{0.1em}}c@{\extracolsep{0.1em}}c@{\extracolsep{0.1em}}}
			&\includegraphics[height=3.2cm,width=4.2cm,trim=0 0 0 0,clip]{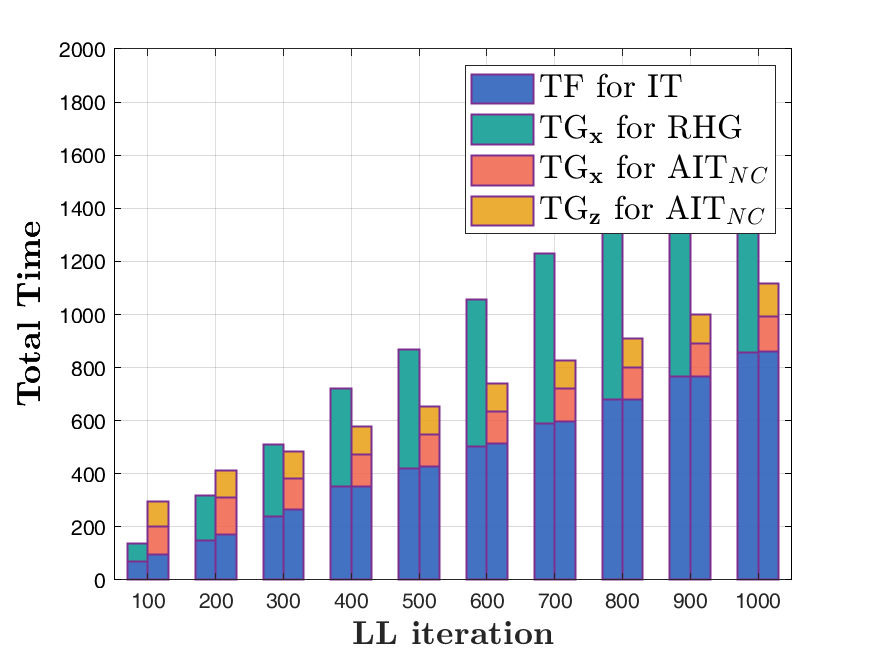} &\includegraphics[height=3.2cm,width=4.2cm,trim=0 0 0 0,clip]{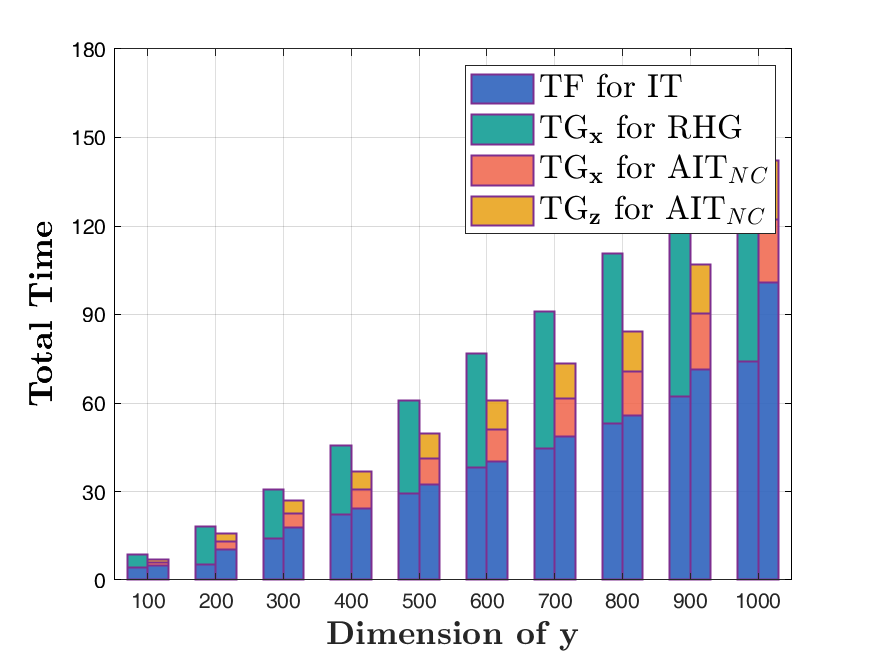}\\
			&\footnotesize (a)~Influence of $K$ &\footnotesize (b) Influence of $n$\\
			
		\end{tabular}
	\end{center}
	%\vspace{-0.4cm}
	\caption{The two subfigures report the total time of RHG and $\textrm{AIT}_{NC}$ for a single UL iteration as the LL iteration $K$ or dimension of $\y$ increases. “IT” refers to the iterative trajectory; $\textrm{TG}_{(\cdot)}$ denotes the time required for computing the gradients of the corresponding variables, and $\textrm{TF}$ represents the forward propagation time used to construct $\mathcal{T}$. Note that we use the finally stabilized $\bar{K}$ for $\textrm{AIT}_{NC}$ and run 5 steps to calculate the average time for comparison.}\label{toy_convergence_nonconvex_5}
\end{figure}

In Fig.~\ref{toy_convergence_nonconvex_4}, we report the average running time of hyper-gradient computation with respective to different initialization points in this numerical case. It can be observed that as the PTT operation of $\textrm{AIT}_{NC}$  dynamically truncates the optimization trajectory and selects $\bar{K}\in[1,K]$ after $K$ LL iterations of LL optimization, the computational burden of hyper-gradient w.r.t. $\x$ and $\z$ is continuously eased. In addition, we found that as the curve converges with favorable convergence speed in Fig.~\ref{toy_convergence_nonconvex_3}, $\bar{K}$ tends to stabilize around a fixed number independent of the chosen initialization points. 

To evaluate the performance for high-dimensional BLO problems, we further investigate how dimension of LL subproblems (i.e., $n$) and LL iteration (i.e., $K$) influence the computation efficiency. Known that the computation burden has been evaluated to be less dependent on the dimension of UL subproblem when calculating the hyper-gradient with AD~\cite{franceschi2017forward} (adopted by classical GBMs, e.g., RHG), so in Fig.~\ref{toy_convergence_nonconvex_5}, we continuously increase the dimension of $\y$ and record the average time for gradient computation of RHG and $\textrm{AIT}_{NC}$. We can observe that $\bar{K}\in[1,K]$ always decreases the time cost for the UL optimization process compared with RHG, even extra computation for $\bar{K}$ and hyper-gradient of $\z$ is introduced in the $\textrm{AIT}_{NC}$. Besides, we also evaluate the runtime of RHG and $\textrm{AIT}_{NC}$ with increasing $K$. Compared with $n$, the increasing $K$ has more influence on the computation for iterative optimization of $\y_{k}$ and hyper-gradient of $\x$. Accordingly, the PTT technique chooses smaller $\bar{K}$ for backpropagation ( Step \ref{outer_loop_2}-\ref{outer_loop_3} in Alg.~\ref{alg:AIT-N}), thus reduces the computation burden of backpropagation (i.e., $\textrm{PT}_{\x}$ and $\textrm{PT}_{\z}$).

\subsection{Typical Learning and Vision Applications}

In the following, we further demonstrate the performance of AIT with typical BLO applications influenced by the nonconvex LL objectives and nonconvex LL network structure, including few-shot learning and data hyper-cleaning.

\subsubsection{Data Hyper-Cleaning}

The goal of data hyper-cleaning is to cleanup the dataset in which the labels have been corrupted. This BLO problem defines the UL variables $\x$ as a vector, and the dimension of $\x$ equals to the number of corrupted samples, while the LL variable $\y$ corresponds to parameters of the classification model. Following the common problem setting of existing works~\cite{shaban2019truncated}, we first split the dataset into three disjoint subsets: $\mathcal{D}_{\mathtt{tr}}$, $\mathcal{D}_{\mathtt{val}}$ and $\mathcal{D}_{\mathtt{test}}$, and then randomly pollute the labels in $\mathcal{D}_{\mathtt{tr}}$ with fixed ratio. Then the UL subproblem can be well defined as
$
F(\mathbf{x}, \mathbf{y})=\sum_{\left(\mathbf{u}_{i}, \mathbf{v}_{i}\right) \in \mathcal{D}_{\mathtt{val}}} \ell\left(\mathbf{y}(\mathbf{x}); \mathbf{u}_{i}, \mathbf{v}_{i}\right), 
$
where $\left(\mathbf{u}_{i}, \mathbf{v}_{i}\right)$ represents the data pairs, $\ell\left(\mathbf{y}(\mathbf{x}); \mathbf{u}_{i}, \mathbf{v}_{i}\right)$ denotes the cross-entropy loss function with classifier parameter $\y$ and data pairs from different subsets. By introducing $\sigma(\mathbf{x})$ as the element-wise sigmoid function to constrain the element in the range of $\left[0,1\right]$, the LL subproblem can be well defined as: $ f(\mathbf{x}, \mathbf{y})=\sum_{\left(\mathbf{u}_{i}, \mathbf{v}_{i}\right) \in \mathcal{D}_{\mathtt{tr}}}[\sigma(\mathbf{x})]_{i} \ell\left(\mathbf{y} ; \mathbf{u}_{i}, \mathbf{v}_{i}\right)$.

\begin{table}[h!]
	\centering
	\caption{With convex network structure, we report results of existing GBMs including CG, Neumann, RHG and BDA for solving data hyper-cleaning tasks. Acc and F1 score denote the test accuracy and the harmonic mean of the precision and recall, respectively. $\textrm{AIT}_{P}$ denotes our AIT with embedded prior regularization.}\label{tab:hyper cleaning}
	\renewcommand\arraystretch{1.2}
	\setlength{\tabcolsep}{1.1mm}{
		\begin{tabular}{|c|c| c|c| c| }
			\hline
			\multirow{2}{*} { Method } & \multicolumn{2}{c|} { MNIST } & \multicolumn{2}{c|}{ CIFAR10} \\
			\cline { 2 - 5}
			& Acc. & F1 score  & Acc. & F1 score \\
			\hline
			CG & $89.12$ & $87.51$ &$36.80$ & $68.56$ \\
			Neumann & $88.90$ & $89.90$ &$35.15$ & $67.79$\\
			RHG & $88.96$ & $90.49$& $34.80$ &$69.75$  \\
			BDA & $88.55$ & $90.99$ &$35.56$ &$69.58$\\
			\hline
			$\textrm{AIT}$ &{$89.58$} &{$91.35$}& $36.95$&{\color{black}$\mathbf{72.19}$} \\
			%	\hline
			$\textrm{AIT}_{P}$&{\color{black}$\mathbf{89.63}$} & {\color{black}$\mathbf{91.98}$}& {\color{black}$\mathbf{38.08}$}&$70.75$\\
			\hline
		\end{tabular}
	}
\end{table}

Typically, to satisfy the LLC assumption admitted by existing GBMs, the LL classification model is usually defined as a single fully-connected layer, and multiple-layer classifier with more parameters are not accessible.  Based on the AIT, the constraints could be continuously relaxed thus we have more choices when designing the LL model.  Besides, the prior regularization also serves as effective tool to help improve the performance of AIT. To find prior determined value $\z_{p}$ for the IA, (i.e., $\z$), we pretrain the LL classification model on $\mathcal{D}_{\textrm{tr}}$ and restore the pretrained model weights as the proximal prior to adjust the updates and facilitate the convergence behavior of $\z$. Then we could specify the form of prior regularization  in Eq.~\eqref{eq:proximal_prior}. Practically, we implement the prior regularization  $g(\z)$ as $\ell_2$ regularization, and add this term only for the first iteration to investigate how this regularization term influence the latter optimization process. Three well known datasets including MNIST, FashionMNIST and CIFAR10 are used to conduct the experiments. Specifically, 5000, 5000, 10000 samples are randomly selected to construct $\mathcal{D}_{\mathtt{tr}}$, $\mathcal{D}_{\mathtt{val}}$ and $\mathcal{D}_{\mathtt{test}}$, then half of the labels in $\mathcal{D}_{\mathtt{tr}}$ are tampered.

We first implement this application with convex LL network structure including a single-layer fully-connected classifier. Then the shape of $\y$ is $\y\in\mathbb{R}^{10\times d}$, where $d$ denotes the dimension of the flattened input data. In Tab.~\ref{tab:hyper cleaning}, we compare our AIT with CG, Neumann, RHG and BDA. As it has been verified in the numerical experiments before, we can observe that AIT helps improve the performance on both Accuracy and F1 score based on MNIST and CIFAR10 dataset. Besides, though the  prior regularization term only exists in the first iteration, it helps correct the initial state of constructed optimization dynamics, and further improves the performance of AIT on both datasets. 

\begin{table}[h!]
	\centering
	\caption{Reporting results of existing methods for solving data hyper-cleaning tasks with non-convex LL network structure.}\label{tab:hyper_cleaning_all}
	\renewcommand\arraystretch{1.2}
	\setlength{\tabcolsep}{1.3mm}{
		\begin{tabular}{|c|c| c|c|c|c|c| }
			\hline
			\multirow{2}{*} { Method } & \multicolumn{2}{c|} { MNIST } & \multicolumn{2}{c|} { FashionMNIST }  & \multicolumn{2}{c|} { CIFAR10 }\\
			\cline { 2 - 7 }
			& Acc. & F1 score & Acc. & F1 score & Acc. & F1 score\\
			\hline
			CG & $89.19$ & $85.96$ & $83.15$ & $85.13$ &$34.16$&$69.10$\\
			Neumann & $87.54$ & $89.58$ & $81.37$ & $87.28$  &$33.45$&$68.87$\\
			RHG & $87.90$ & $89.36$& $81.91$ & $87.12$  &$34.95$&$68.27$\\
			BDA & $87.15$ & $90.38$&  $79.97$ & $88.24$ &$36.41$&$67.33$\\
			\hline
			$\textrm{AIT}$ & {\color{black}$\mathbf{90.88}$} & {$91.57$}& {$83.67$} & {$90.37$} &{$37.16$}&{$71.57$}\\
			$\textrm{AIT}_{P}$&{$90.41$}&{\color{black}$\mathbf{91.95}$}&{\color{black}$\mathbf{83.80}$}&{\color{black}$\mathbf{90.40}$}&{\color{black}$\mathbf{37.70}$}&{\color{black}$\mathbf{72.74}$}\\
			\hline
		\end{tabular}
	}
\end{table}

\begin{table*}[!htbp]
	\centering
	\caption{Mean test accuracy of 5-way classification on MiniImageNet and TieredImageNet with non-convex LL objective. We use $\pm$ to report the accuracy with $95\%$ confidence intervals over tasks. }
	\label{tab:few_shot}
	\renewcommand\arraystretch{1.2}
	\setlength{\tabcolsep}{1.3mm}{
		\begin{tabular}{  |c | c | c | c | c | c| }
			%\toprule
			\hline
			\multirow{2}{*}{Network} &   \multirow{2}{*}{Methods} & \multicolumn{2}{c|}{MiniImagenet} & \multicolumn{2}{c|}{TieredImagenet} \\
			\cline{3-6}
			& & 5-way 1-shot & 5-way 5-shot & 5-way 1-shot & 5-way 5-shot \\
			\cline{1-6}
			\multirow{5}{*}{\rotatebox{0}{ConvNet-4}} & MAML & $48.70 \pm 0.75\%$ &  $63.11\pm 0.11\%$ & $49.06 \pm 0.50\%$ &  $67.48\pm 0.47\%$\\
			&RHG & $48.89 \pm 0.81\%$ & $63.02 \pm 0.70\%$ & $49.63 \pm 0.67\%$ & $66.14 \pm 0.57\%$\\
			&T-RHG & $47.67 \pm 0.82\%$ & $63.70 \pm 0.76\%$ & $50.79 \pm 0.69\%$ & $67.39 \pm 0.60\%$\\
			&BDA& $49.08 \pm 0.82\%$ & $62.17 \pm 0.70\%$ & $51.56 \pm 0.68\%$ & $\mathbf{68.21 \pm 0.58}\%$\\
			&$\textrm{AIT}$  &$\mathbf{49.80 \pm 0.61\%} $ & $\mathbf{64.76 \pm 0.54\%}$ & $\mathbf{51.86 \pm 0.68\%}$ &$68.01 \pm 0.60\%$\\
			\hline
			\multirow{3}{*}{\rotatebox{0}{ResNet-12}}&MAML & $51.03 \pm 0.50\%$ &  {\color{black}${68.26\pm 0.47}\%$ }& {\color{black}${58.58\pm 0.49}\%$} &  $71.24\pm 0.43\%$  \\
			&RHG &{\color{black}${ 50.54\pm 0.85}\%$} & $64.53 \pm 0.68\%$ & $58.19 \pm 0.76\%$ &{\color{black}${75.20 \pm 0.60}\%$}\\
			&$\textrm{AIT}$ &{\color{black}$\mathbf{56.69 \pm 0.66}\%$} & {\color{black}$\mathbf{70.21 \pm 0.55}\%$} &{\color{black}$\mathbf{60.71 \pm 0.77}\%$}& {\color{black} $\mathbf{75.85 \pm 0.59}\%$} \\
			\hline
		\end{tabular}
	}
\end{table*}

In addition, we introduce the non-convex network structure with two layers of fully-connected network to expand the search space of LL subproblem, where $\y\in\mathbb{R}^{10\time 301}\times\mathbb{R}^{301\times d}$. In Tab.~\ref{tab:hyper_cleaning_all}, we compare our $\textrm{AIT}$ and the variation with prior regularization, (i.e., $\textrm{AIT}_{P}$) with RHG, BDA, CG and Neumann on validation accuracy, F1 score and running time. As it is shown, our method performs the best compared with other GBMs on both accuracy and F1 score. Besides, although only used as a practical technique for solving non-convex BLOs, the embedded proximal prior also consistently improve the performance of $\textrm{AIT}_{P}$ on three datasets.

\subsubsection{Few-Shot Learning}

Few-shot learning applications, more specifically, the few-shot classification tasks~\cite{lecun1998gradient}, train the model to classify unseen instances with only a few samples based on prior training data of similar tasks. As for the N-way K-shot classification, we first define the training dataset as $\mathcal{D}=\{\mathcal{D}^{j}\}$, where $\mathcal{D}^{j}=\mathcal{D}_\mathtt{tr}^{j}\bigcup\mathcal{D}_{\mathtt{val}}^{j}$ corresponds to the meta-train and meta-validation set for the $j\textrm{-th}$ task. In the training phase, we select $K$ samples from each of the $N$ classes in $\mathcal{D}_{\mathtt{tr}}$ for meta training, and use more data from these classes in $\mathcal{D}_{\mathtt{val}}$ for meta validation. Then in the test phase, we construct new tasks on test dataset under the same data distribution to test the performance. 

We follow the commonly used parameter setting for GBMs, where the UL variables $\x$ correspond to the shared hyper representation module model, while the LL variables $\y$ correspond to the task-specific classifier. For this classification problem, the cross-entropy function (denoted as $\ell$) is commonly used for the LL objective $f$ and UL objective $F$.  Besides,~$\ell_{q} $ regularization with $ 0<q<1 $ can be applied to stabilize the training and effectively avoid over-fitting, while existing methods only guarantee the convergence for convex scenario where $q \geq 1$, our AIT supports more flexible non-convex LL objective without the LLC or LLS constraints. Then we add $\|{\y^{j}}\|_{q}$ as the non-convex regularization term to define the LL objectives as 
$F\left(\mathbf{x},\left\{\mathbf{y}^{j}\right\}\right)=\sum_{j} \ell\left(\mathbf{x}, \mathbf{y}^{j}; \mathcal{\D}_{\mathtt{val}}^{j}\right), f\left(\mathbf{x},\left\{\mathbf{y}^{j}\right\}\right)=\sum_{j} \ell\left(\mathbf{x}, \mathbf{y}^{j} ; \mathcal{\D}_{\mathtt{tr}}^{j}\right)+ \|{\y^{j}}\|_{q}.\nonumber
$
The above implementation of non-convex LL objective implies another class of complex LL subproblem for BLOs, which could be used to verify the significant improvement of AIT.

We implement $5$-way $1$-shot and $5$-way $5$-shot classification tasks to validate the performance based on MiniImagenet~\cite{vinyals2016matching} and TieredImagenet~\cite{ren2018meta} datasets. Both datasets are subsets constructed from the larger ILSVRC-12 dataset, while TieredImagenet includes more classes than MiniImagenet. Besides, TieredImagenet categorizes the source data with hierarchical structure to split the training and test datasets and also simulates more real scenarios. 

In Tab.~\ref{tab:few_shot}, we compare our AIT with representative methods including MAML~\cite{finn2017model}, RHG, T-RHG~\cite{shaban2019truncated} and BDA. We also consider two structures as the backbones of hyper representation module, including ConvNet-4 with $4$ layers of convolutional blocks and the larger ResNet-12 with Residual blocks. We use a single fully-connected layer as the task-specific classifier for both backbones. For MAML, the hyper representation module and the classifier are treated as a whole and update as the initialization parameters. We first use ConvNet-4 as the backbone and compare AIT with RHG, T-RHG, BDA and MAML, then we also implement the ResNet-12 backbone and compare AIT with initialization based (i.e., MAML) and recurrence based methods (i.e., RHG). As it is shown in Tab.~\ref{tab:few_shot}, for the 5-way 1-shot and 5-way 5-shot classification tasks on MiniImagenet and TieredImagenet, AIT shows significant performance improvement on the accuracy with different backbones as the hyper representation module.

\begin{figure}[h!]
	\begin{center}
		\renewcommand\arraystretch{0.8}
		\begin{tabular}{c@{\extracolsep{0.1em}}c@{\extracolsep{0.1em}}}
			\includegraphics[height=3.5cm,width=4.5cm,trim=0 0 0. 0,clip]{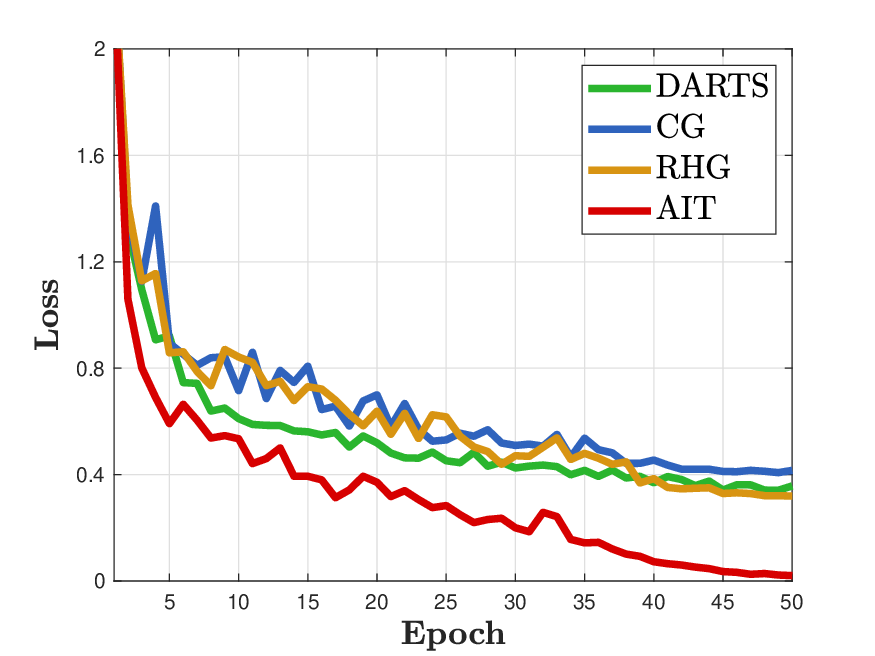}&\includegraphics[height=3.5cm,width=4.5cm,trim=0 0 0. 0,clip]{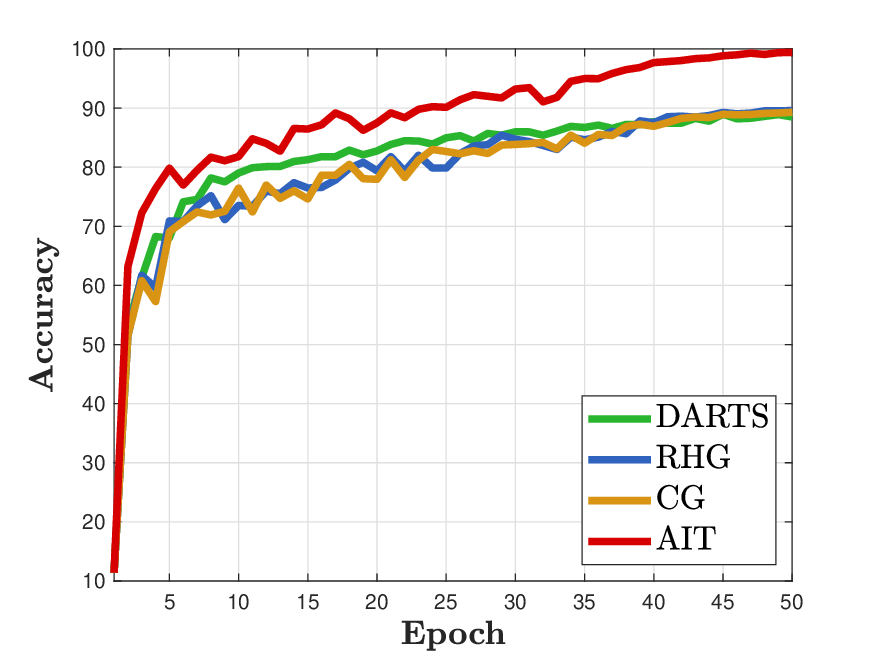}\\
			\specialrule{0em}{0pt}{0pt}
			\footnotesize (a) Validation Loss &\footnotesize (b) Validation Accuracy\\
		\end{tabular}
	\end{center}
	\caption{Comparison of the validation loss and accuracy of DARTS, CG, RHG and AIT in the searching process.}\label{nas_acc}
\end{figure} 

\subsection{Extensions for More Challenging Tasks} 

In this subsection, we first implement AIT to solve large-scale and high-dimensional real-world BLO applications, e.g., neural architecture search. Then we move on to other complex deep learning models, e.g., generative adversarial networks, to demonstrate its generalization performance.   
\subsubsection{Neural Architecture Search}

We denote the training set, validation set, and test set as $(\mathcal{D}_{\mathtt{tr}}, \mathcal{D}_{\mathtt{val}}, \mathcal{D}_{\mathtt{test}})$, then the UL objective and LL objective can be written as $F\left(\x, \y; \mathcal{\D}_{\mathtt{val}}\right)$ and $f\left(\x, \y; \mathcal{\D}_{\mathtt{tr}}\right)$. Then we follow the standard training strategies in DARTS, which consists of the searching and the inference stages. As for the searching stage, DARTS executes a single-step LL optimization (Step \ref{inner_loop1} in Alg.\ref{alg:innerloop}) with $f\left(\mathbf{x}, \mathbf{y}; \mathcal{\D}_{\mathtt{tr}}\right)$, and continuously optimizes the architecture parameter $\x$ (Step \ref{indirect_gradient} in Alg.\ref{alg:innerloop}) to find better cell structures according to the UL objectives  $F\left(\mathbf{x}, \mathbf{y}; \mathcal{\D}_{\mathtt{val}}\right)$. When it comes to the inference stage, the architecture parameter $\x$ is fixed, and we reuse the searched cell structure to construct larger architecture and train it from scratch based on $\mathcal{D}_{\mathtt{tr}}$. Finally, we test the performance of trained model at the inference stage on $\mathcal{D}_{\mathtt{test}}$. In this paper, we conduct the experiments on CIFAR-10 dataset, and use the same search space and similar experimental settings of DARTS. The stacked cell for final architecture has two types, including reduction cells and normal cells. In the reduction cell, operations adjacent to the input nodes use stride 2 to reduce the input shape, while normal cells maintain the original shape with stride 1 for the first two nodes. We use $3$ layers of cells for searching with $50$ epochs and construct larger structure with $8$ layers of the searched cells, and train the model from scratch with 600 epochs in total. 

\begin{figure}[htbp]
	\begin{center}
		\renewcommand\arraystretch{0.1}
		\begin{tabular}{c@{\extracolsep{0.1em}}c@{\extracolsep{0.1em}}c@{\extracolsep{0.1em}}}
			\specialrule{0.01em}{3pt}{3pt}
			\multirow{2}{*}{\rotatebox{90}{Normal Cell~~~~~~~~~}}&&\\
			&\includegraphics[height=1.6cm,width=4cm,trim=0 0 0 0,clip]{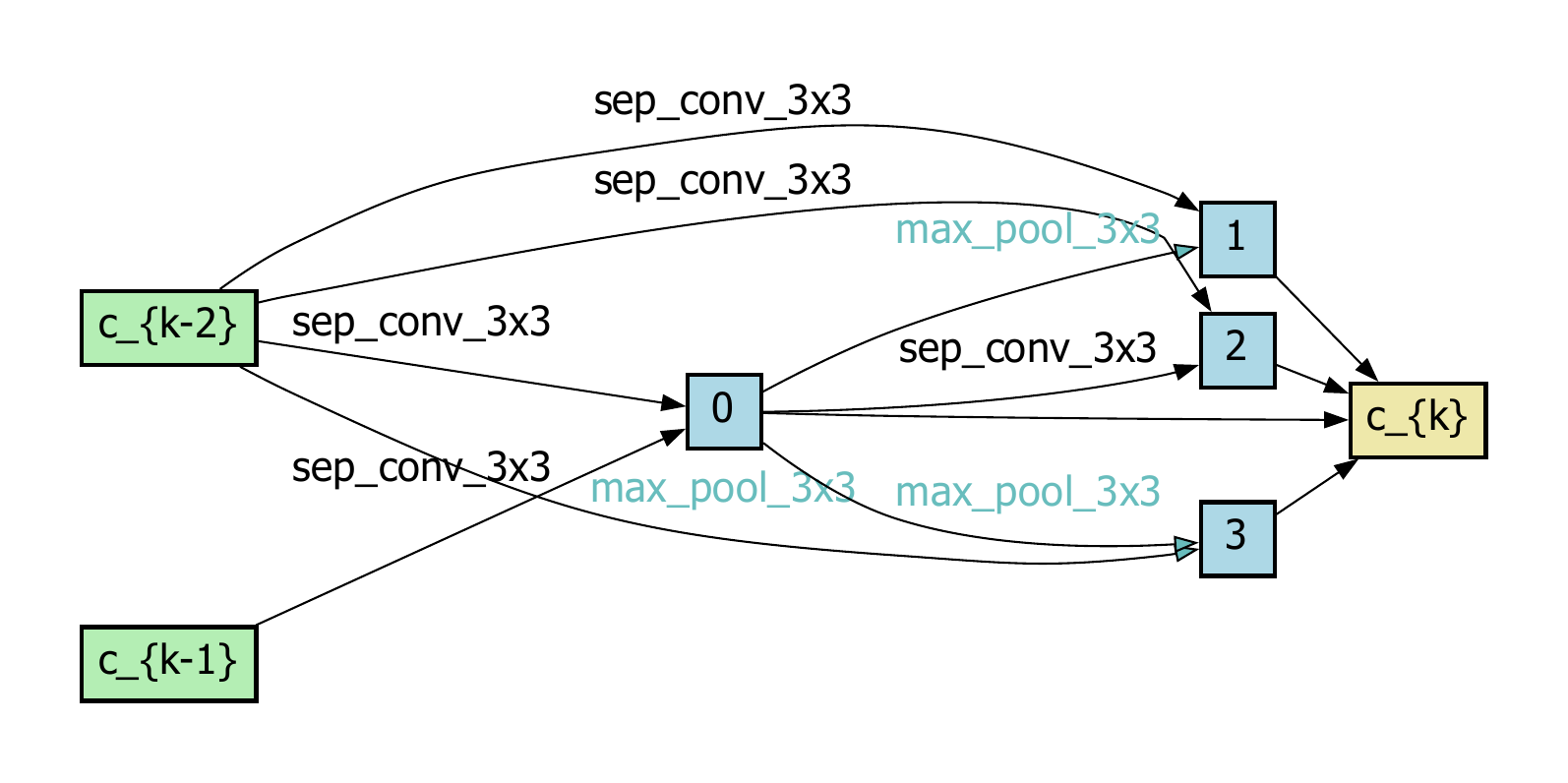} &\includegraphics[height=1.6cm,width=4cm,trim= 0 0 0 0,clip]{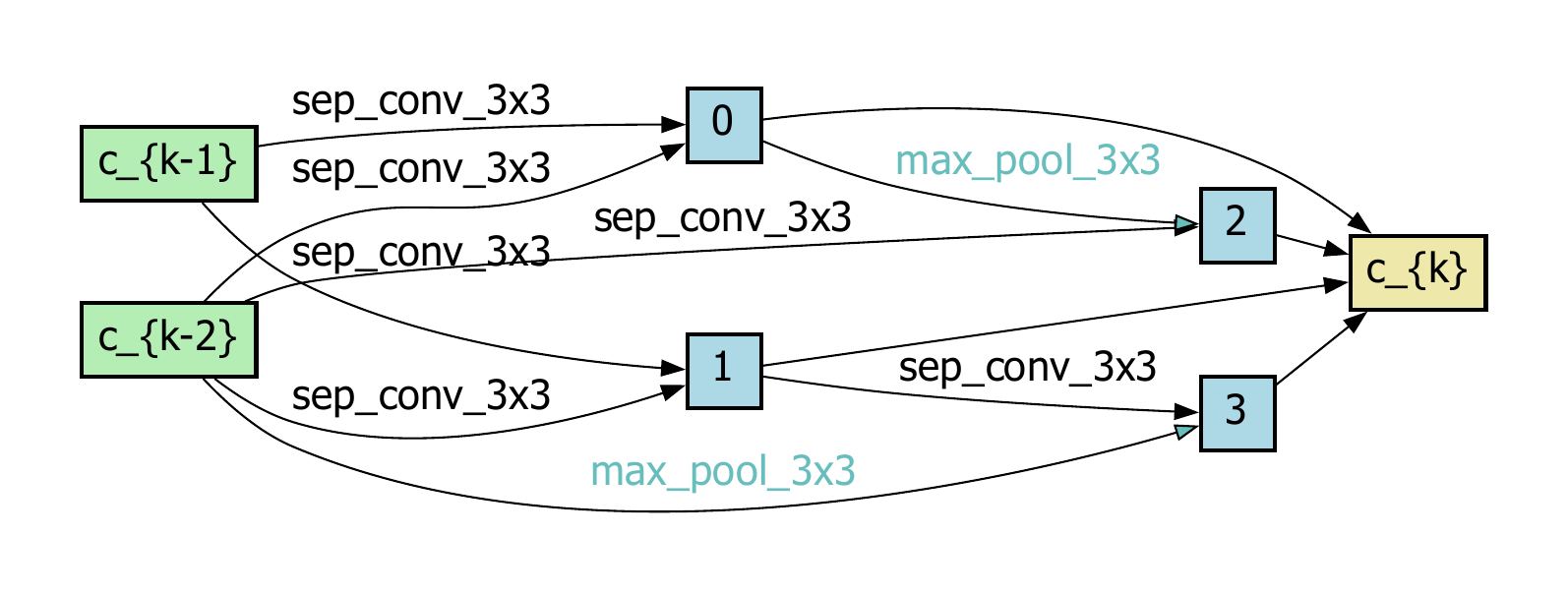}	\\
			&\footnotesize (a) DARTS &\footnotesize (b) RHG \\
			&\includegraphics[height=1.4cm,width=4cm,trim= 0 0 0 0,clip]{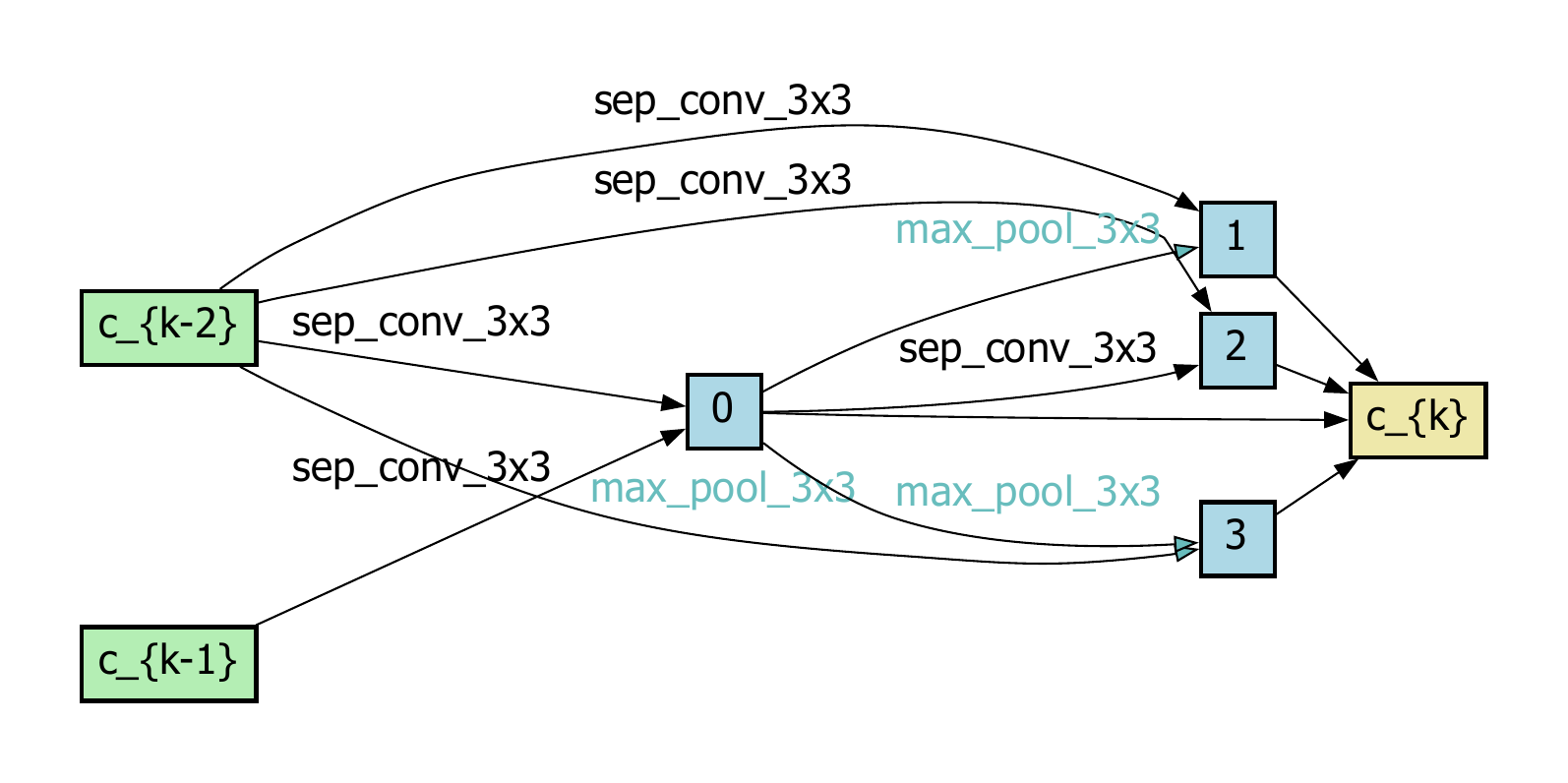}&	\includegraphics[height=1.4cm,width=4cm,trim= 0 0 0 0,clip]{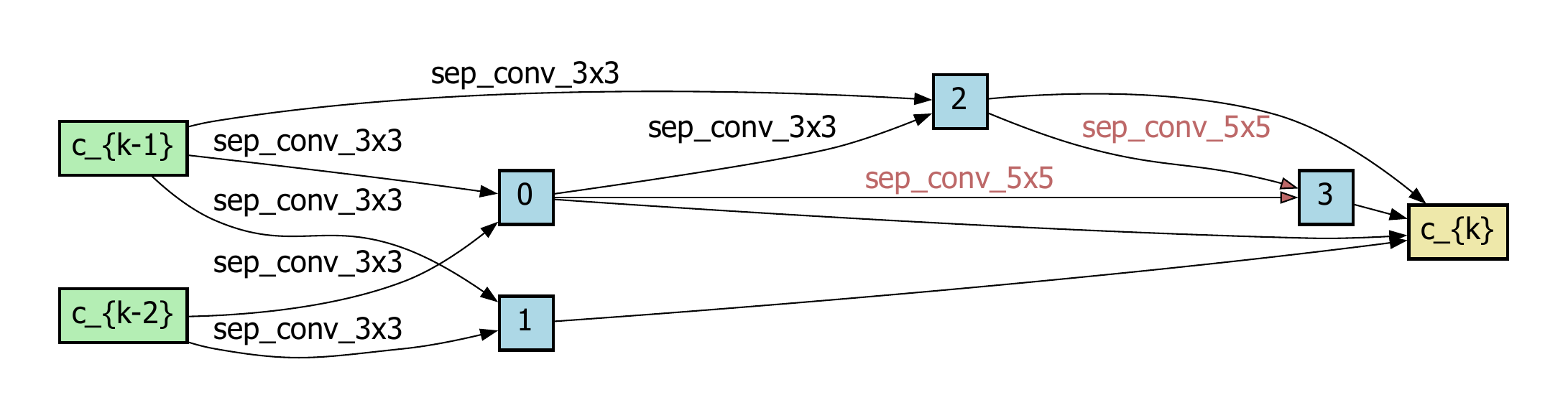}\\
			\multirow{2}{*}{\rotatebox{90}{Reduction Cell~~~~~~~~~~}}&\footnotesize (c) CG &\footnotesize (d) AIT \\
			\specialrule{0.01em}{3pt}{3pt}
			&\includegraphics[height=1.8cm,width=4cm,trim=0 0 0 0,clip]{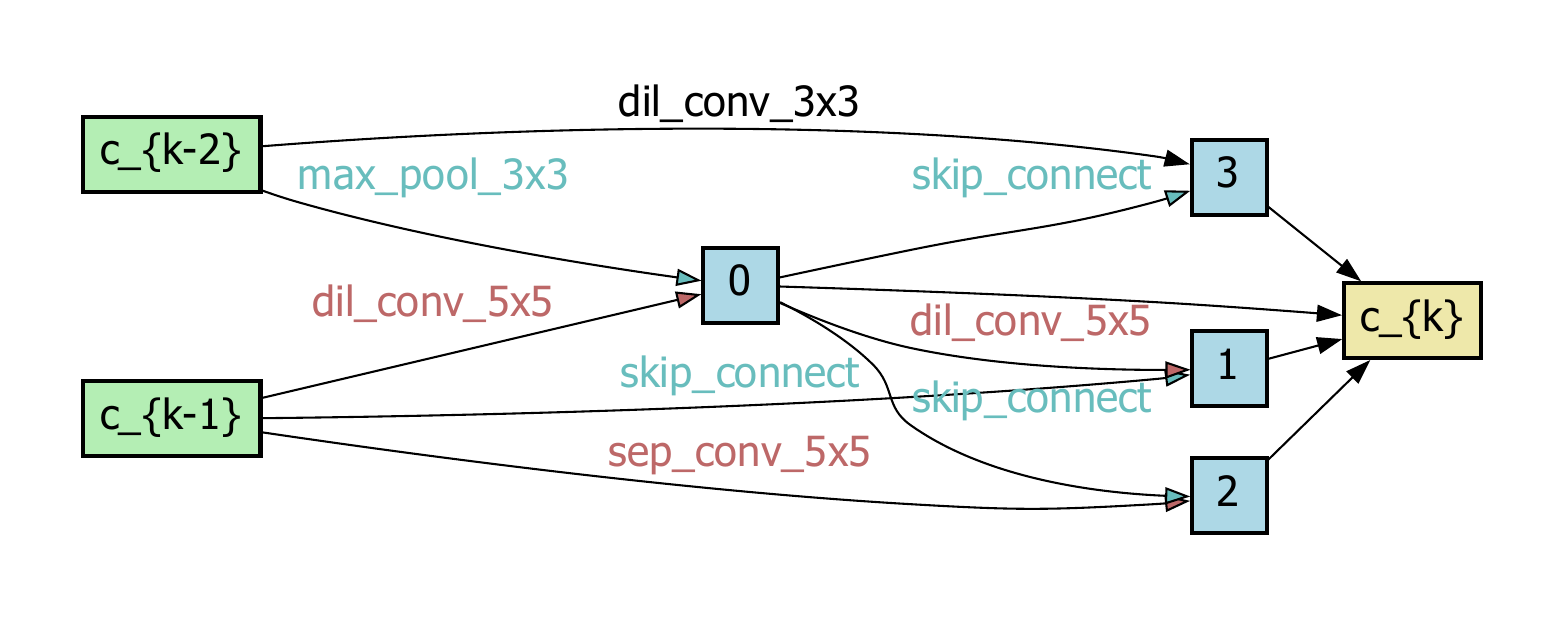}&			\includegraphics[height=1.8cm,width=4cm,trim=0 0 0 0,clip]{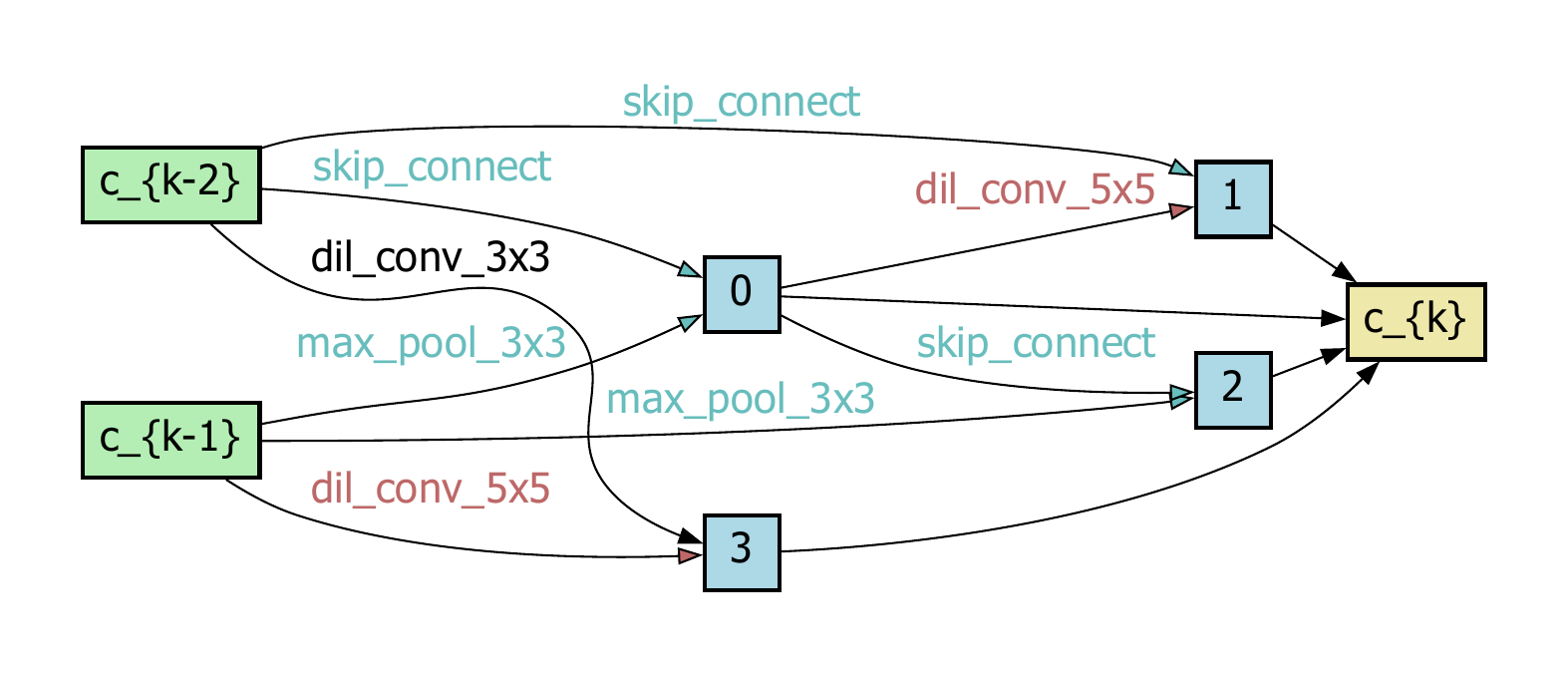}		\\	
			&\footnotesize (a) DARTS &\footnotesize (b) RHG\\
			&\includegraphics[height=1.8cm,width=4cm,trim=0 0 0 0,clip]{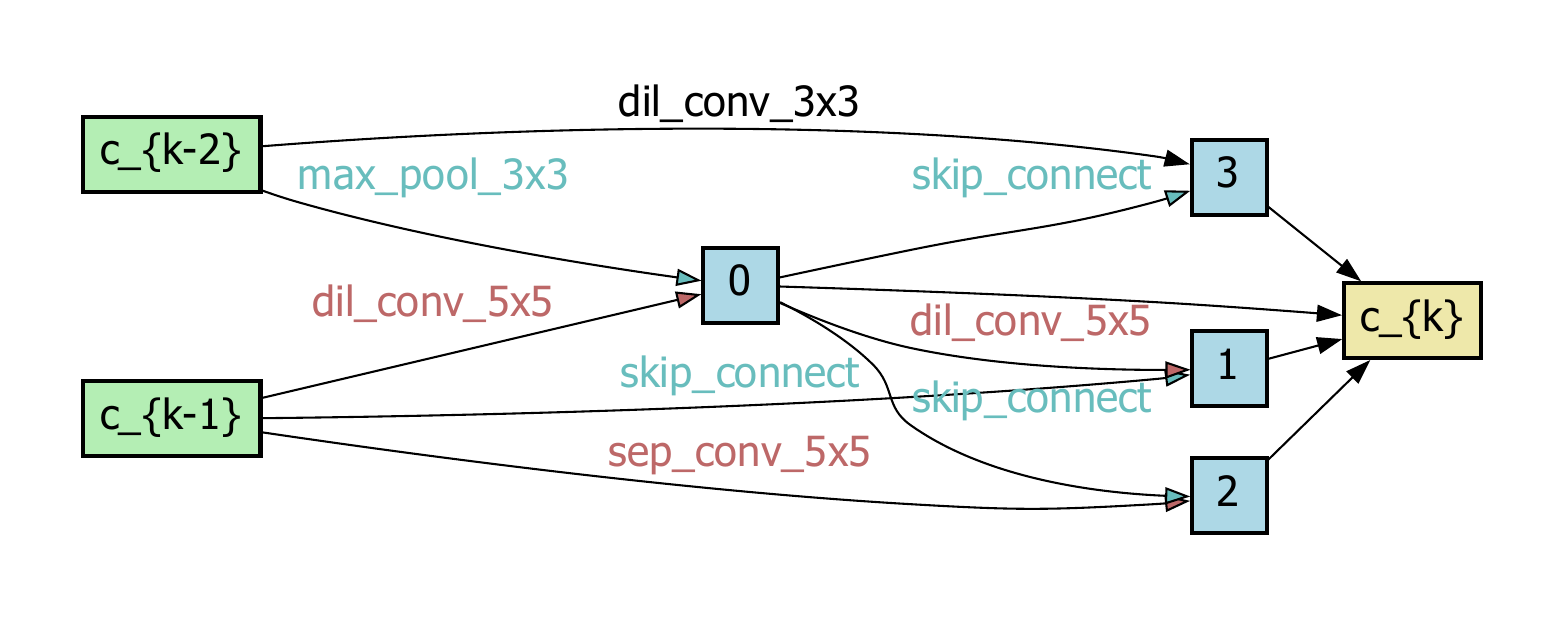}&\includegraphics[height=1.8cm,width=4cm,trim=0 0 0 0,clip]{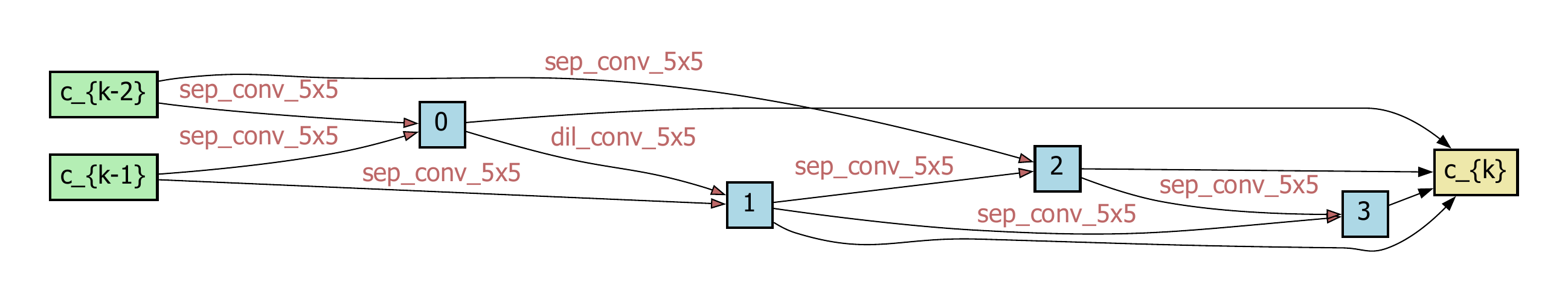}\\
			&\footnotesize (c) CG &\footnotesize (d) AIT\\
			\specialrule{0.01em}{3pt}{3pt}
		\end{tabular}
	\end{center}
	\caption{Visualization results of the searched normal cell and reduction cell for DARTS, RHG, CG and AIT. Note that the edges with blue fonts denote operations with less parameters such as skip connections and pooling operations, and edges with red fonts denote more complex operations such as $5\times5$ separable convolutions.}\label{nas_architect}
\end{figure}

\begin{table}[h!]
	\centering
	\caption{Reporting top 1 accuracy of searching stage, inference stage, final test stage for DARTS, RHG, CG and AIT. We also report the number of parameters of the searched structure (MB) for different methods.}\label{tab:nas_acc}
	\renewcommand\arraystretch{1.4}
	\setlength{\tabcolsep}{1.5mm}{
		\begin{tabular}{|c|c|c|c| c|c| c|}
			\hline
			\multirow{2}{*}{Method}& \multicolumn{2}{c|}{Searching}&\multicolumn{2}{c|}{Inference}&\multirow{2}{*}{Test}&\multirow{2}{*}{Params}  \\
			\cline{2-5}
			&Train &Valid&Train&Valid&  &\\
			\hline
			DARTS &$98.320$&$88.940$&$97.952$&$96.710$&$96.670$&$1.277$\\
			RHG&$98.448$&{$89.556$}&{\color{black}$98.494$}& {\color{black}$97.010$}&{\color{black}$96.920$}&$1.359$\\
			CG &{\color{black}$\mathbf{99.126}$}&$89.298$&$98.286$&$96.690$&$96.640$&$1.268$\\
			\hline
			AIT &$98.904$&{\color{black}$\mathbf{99.512}$}&{\color{black}$\mathbf{98.718}$}& {\color{black}$\mathbf{97.470}$}&{\color{black}$\mathbf{97.330}$}&$\mathbf{1.963}$ \\
			\hline
		\end{tabular}
	}
\end{table}

\begin{figure*}[h!]
	\begin{center}
		\begin{tabular}{c@{\extracolsep{0.5em}}c|@{\extracolsep{2.5em}}@{\extracolsep{1em}}c@{\extracolsep{1.5em}}c@{\extracolsep{1.5em}}c@{\extracolsep{1.5em}}c@{\extracolsep{1.5em}}}
			\rotatebox{90}{~~2D~Pentagram}&\includegraphics[height=2.3cm,width=3cm,trim=0 0 0 30,clip]{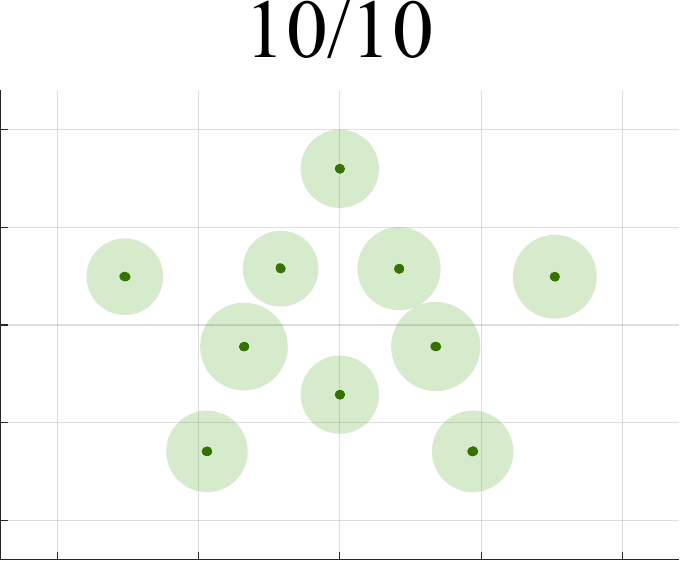} &\includegraphics[height=2.3cm,width=3cm,trim=0 0 0 30,clip]{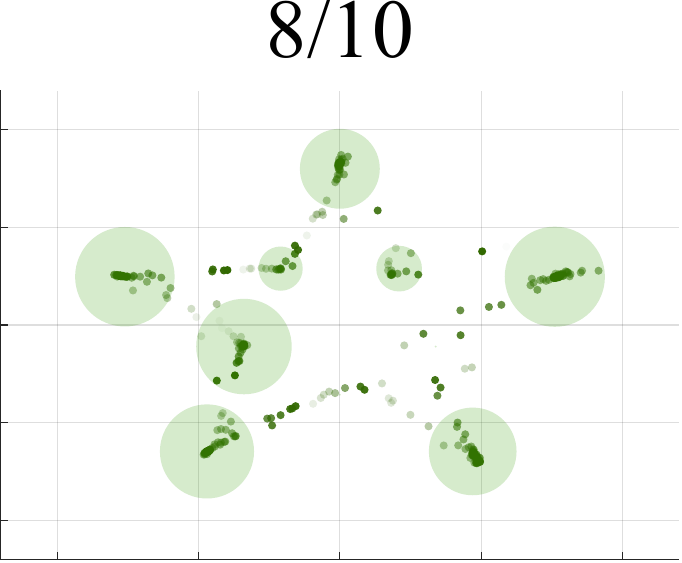}
			& \includegraphics[height=2.3cm,width=3cm,trim=0 0 0 30,clip]{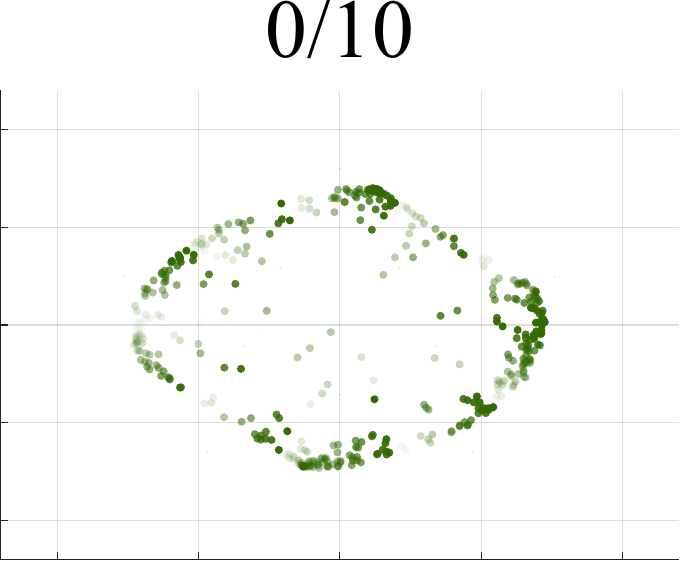} &\includegraphics[height=2.3cm,width=3cm,trim=0 0 0 30,clip]{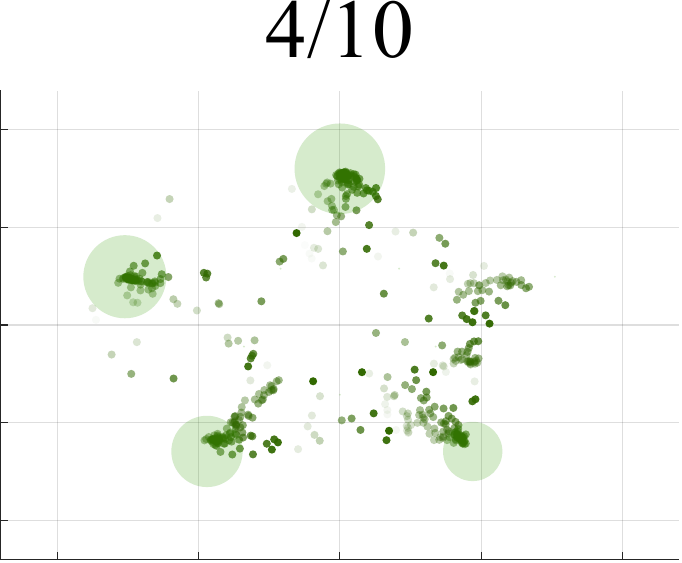}
			&\includegraphics[height=2.3cm,width=3cm,trim=0 0 0 30,clip]{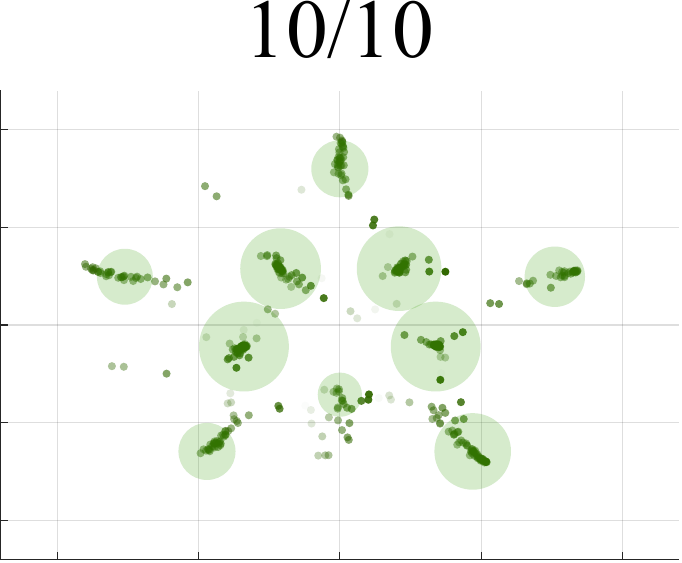}\\
			&Target &(a) VGAN & (b) WGAN	
			&(c) UGAN & (d) AIT\\
		\end{tabular}
	\end{center}
	\caption{We report the results based on the synthesized distribution of 2D Pentagram, and compare AIT with existing GAN methods including VGAN, WGAN and UGAN.}\label{GAN_plot}
\end{figure*}

In Fig.~\ref{nas_acc}, we report the validation loss and accuracy after each epoch to evaluate the convergence behavior of DARTS, RHG, CG and our AIT. It can be seen that RHG, CG and DARTS gain similar validation performance on this non-convex BLO application, while AIT significantly improves the convergence results on $\mathcal{D}_{\textrm{val}}$. Noticed that several works~\cite{chu2020fair, arber2020understanding} found that DARTS may find the global minimum with very small search space but it starts overfitting the architecture parameters on $\mathcal{D}_{\textrm{val}}$, which leads to poor generalization performance. To demonstrate that the searched architecture has consistent performance, in Tab.~\ref{tab:nas_acc}, we also report the accuracy on $\mathcal{D}_{\mathtt{tr}}$ and $\mathcal{D}_{\mathtt{val}}$ at different stages. It can be observed that only AIT maintains better performance on both $\mathcal{D}_{\textrm{tr}}$ and $\mathcal{D}_{\textrm{val}}$, and it also improves the test performance of finally constructed network structure by a large margin.

Besides, it have been investigated~\cite{arber2020understanding} that DARTS tends to choose more parameter-less operations such as skip connection instead of convolution since the skip connection often leads to rapid gradient descent especially on the proxy datasets which are small and easy to fit. As a result, the generated structure may contains less learnable parameters to some extent. In Fig.~\ref{nas_architect}, we illustrate the searched structure for normal cells and reduction cells to show the difference between these GBMs and AIT. As it is depicted, since DARTS derives approximative hyper-gradient w.r.t. $\x$ compared to RHG, both of them found similar structure while RHG uses complex edges with more parameters as reported in Tab.~\ref{tab:nas_acc}. Besides, though CG found significantly different structures from DARTS and RHG, it has no guarantee of convergence and generalization performance on the test set. In comparison, AIT—equipped with convergence guarantees—constructs both normal and reduction cells using more complex separable convolutions, thereby significantly enhancing the expressive power and generalization ability of the resulting architecture.
\subsubsection{Generative Adversarial Networks}

In practice, we conduct the experiments with synthesized 2 dimensional mixture of Gaussian distributions. Specifically, we generate 10 Gaussians to form the shape of pentagram with maximum radius $r=8$. In Fig.~\ref{GAN_plot}, we compare the visualization results generated by the VGAN, Unrolled GAN~\cite{metz2016unrolled} (UGAN), WGAN and AIT. It can be seen that AIT generates more modes than the above methods to fit the target distribution. In comparison, the original GAN methods only capture a small number of distribution, which also shows the effectiveness of AIT to solve GAN based applications. As for the quantitative results, in Tab.~\ref{tab:GAN_results}, we report three commonly used metrics including Frechet Inception Distance (FID)~\cite{goodfellow2014generative}, Jensen-Shannon divergence (JS)~\cite{heusel2017gans}, number of Modes (Mode) . In comparison, AIT not only always generates all the modes with different random seeds, but also gains significant performance improvement on both JS and FID.

\begin{table}[h!]
	%\vspace{-0.4cm} %
	\centering
	\caption{Reporting results of existing methods for the 2D Pentagram Gaussian distribution bu running the experiment with $3$ random seeds. We use different seeds to run the experiments and compare our AIT with VGAN, UGAN and WGAN on three metrics.}\label{tab:GAN_results}
	\renewcommand\arraystretch{1.4}
	\setlength{\tabcolsep}{1.5mm}{
		\begin{tabular}{|c|c|c|c|}
			\hline
			\multirow{2}[0]{*}{\footnotesize Method} & \multicolumn{3}{c|}{2D Pentagram (Max Mode=10)} \\
			\cline{2-4}		
			& \footnotesize  FID  & \footnotesize  JS  & \footnotesize Mode\\
			\hline
			\footnotesize VGAN  &  \footnotesize 1.214 $\pm$ 0.37  &  \footnotesize 0.191 $\pm$ 0.094 &  \footnotesize 8.00 $\pm$ 0.00\\
			\footnotesize WGAN  & \footnotesize  2.404 $\pm$ 3.80 & \footnotesize 0.746 $\pm$ 0.064 & \footnotesize 3.33 $\pm$ 2.52\\
			\footnotesize UGAN  & \footnotesize 2.355 $\pm$ 2.93 & \footnotesize 0.762 $\pm$ 0.083 & \footnotesize 3.33 $\pm$ 2.89\\
			\hline
			\footnotesize AIT  & \textbf{ \footnotesize 0.256 $\pm$ 0.13 } &\textbf{\footnotesize  0.187 $\pm$ 0.046 } & \textbf{ \footnotesize 10.00 $\pm$ 0.00 }\\
			\hline			
		\end{tabular}%
	}	 
\end{table}

\section{Conclusion}
To summarize, our approach involves a thorough analysis of the gradient-based BLO scheme, identifying two significant deficiencies of the existing iterative trajectory. To address these deficiencies, we propose two augmentation techniques, namely Initialization Auxiliary (IA) and Pessimistic Trajectory Truncation (PTT), and develop a series of variations based on these techniques, such as prior regularization and different forms of acceleration dynamics, to construct our Augmented Iterative Trajectory (AIT) for both convex and non-convex scenarios (i.e., $\textrm{AIT}_{C}$ and $\textrm{AIT}_{NC}$). Theoretically, we establish detailed convergence analysis of AIT for different types of BLOs. Finally, we conduct a series of experiments on various BLO settings using numerical examples, and demonstrate the effectiveness of our approach on typical learning and vision tasks as well as more challenging deep learning models such as NAS and GANs.

% if have a single appendix:
%\appendix[Proof of the Zonklar Equations]
% or
%\appendix  % for no appendix heading
% do not use \section anymore after \appendix, only \section*
% is possibly needed

% use appendices with more than one appendix
% then use \section to start each appendix
% you must declare a \section before using any
% \subsection or using \label (\appendices by itself
% starts a section numbered zero.)
%

%\appendices
%\section{Proof of the First Zonklar Equation}
%Appendix one text goes here.
%
%% you can choose not to have a title for an appendix
%% if you want by leaving the argument blank
%\section{}
%Appendix two text goes here.

% use section* for acknowledgment
\ifCLASSOPTIONcompsoc
% The Computer Society usually uses the plural form
\section*{Acknowledgments}
\else
% regular IEEE prefers the singular form
\section*{Acknowledgment}
\fi

This work is partially supported by the National Natural Science Foundation of China (Nos. 62450072, U22B2052, 12326605, 12222106, 62495085, 62027826, 12501429), the Distinguished Youth Funds of the Liaoning Natural Science Foundation (No.2025JH6/101100001), the Distinguished Young Scholars Funds of Dalian (No.2024RJ002),  the Guangdong Basic and Applied Basic Research Foundation (No. 2022B1515020082), the LiaoNing Revitalization Talents Program (No. 2022RG04) and the Fundamental Research Funds for the Central Universities.

% Can use something like this to put references on a page
% by themselves when using endfloat and the captionsoff option.
\ifCLASSOPTIONcaptionsoff
\newpage
\fi

% trigger a \newpage just before the given reference
% number - used to balance the columns on the last page
% adjust value as needed - may need to be readjusted if
% the document is modified later
%\IEEEtriggeratref{8}
% The "triggered" command can be changed if desired:
%\IEEEtriggercmd{\enlargethispage{-5in}}

% references section

% can use a bibliography generated by BibTeX as a .bbl file
% BibTeX documentation can be easily obtained at:
% http://mirror.ctan.org/biblio/bibtex/contrib/doc/
% The IEEEtran BibTeX style support page is at:
% http://www.michaelshell.org/tex/ieeetran/bibtex/
% argument is your BibTeX string definitions and bibliography database(s)
%\bibliography{IEEEabrv,../bib/paper}
%
% <OR> manually copy in the resultant .bbl file
% set second argument of \begin to the number of references
% (used to reserve space for the reference number labels box)
\bibliographystyle{IEEEtran}
\bibliography{reference}

\begin{IEEEbiography}[{\includegraphics[width=1in,height=1.25in,clip,keepaspectratio]{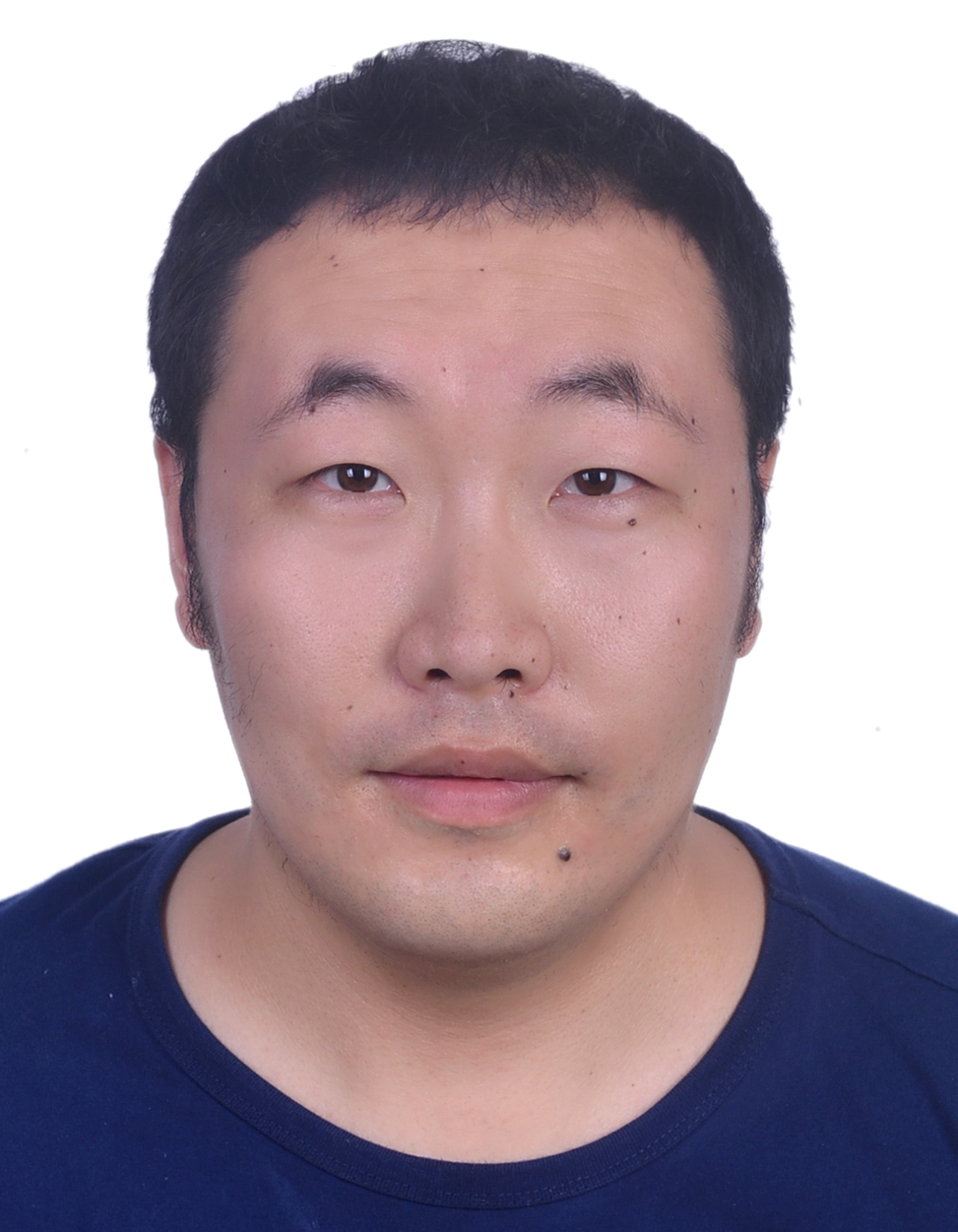}}]{Risheng Liu} received the B.S. and Ph.D. degrees both in mathematics from the Dalian University of Technology in 2007 and 2012, respectively. He was a visiting scholar in the Robotic Institute of Carnegie Mellon University from 2010 to 2012. He served as Hong Kong Scholar Research Fellow at the Hong Kong Polytechnic University from 2016 to 2017. He is currently a professor with DUT-RU International School of Information Science \& Engineering, Dalian University of Technology. His research interests include machine learning, optimization, computer vision and multimedia.
\end{IEEEbiography}

\begin{IEEEbiography}[{\includegraphics[width=1in,height=1.25in,clip,keepaspectratio]{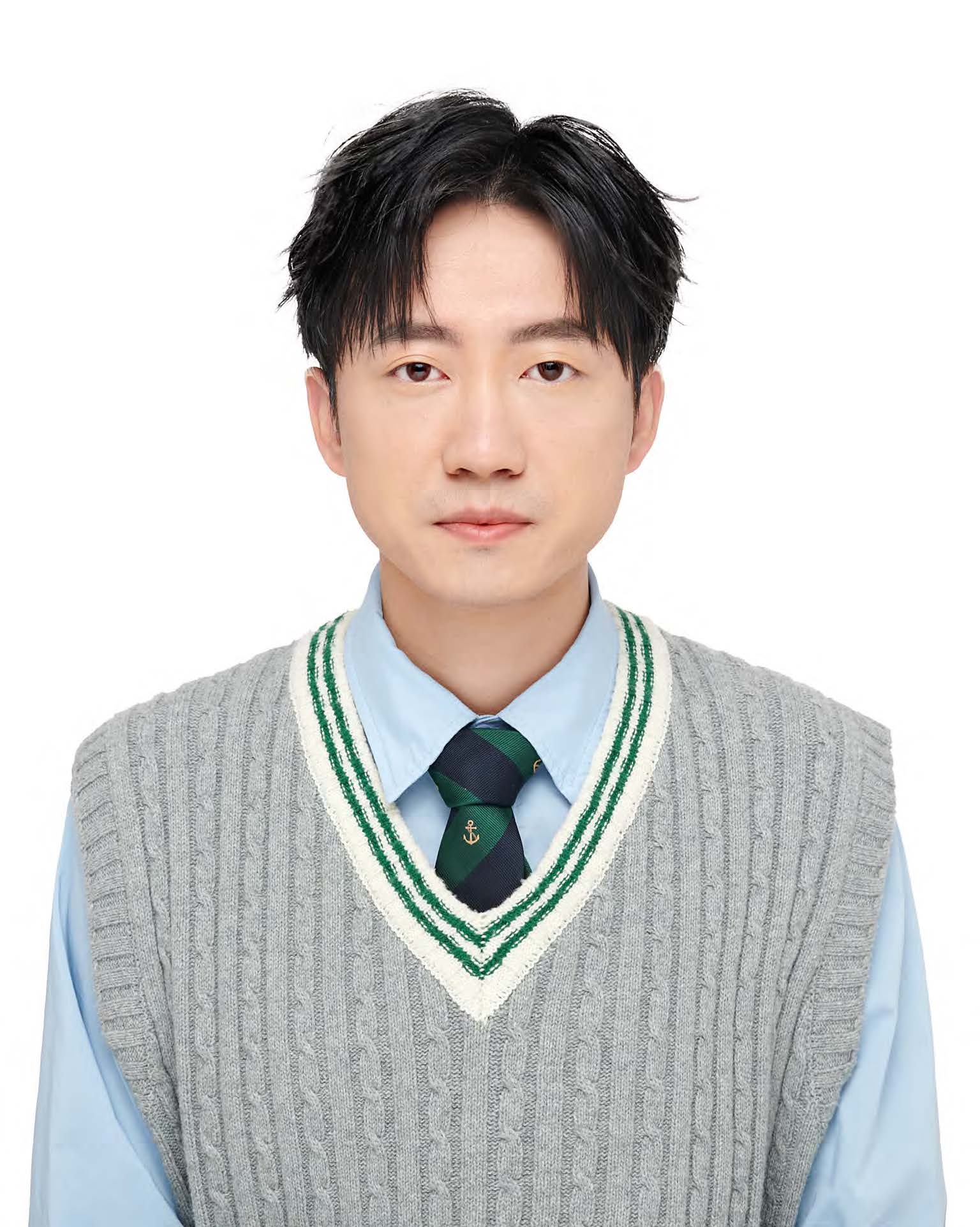}}]{Yaohua Liu}
	received the B.Eng. degree in Software Engineering from the School of Software, Dalian University of Technology, China, in 2019, and the M.Eng. and Ph.D. degrees in Software Engineering from the DUT-RU International School of Information Science and Engineering, Dalian University of Technology, China, in 2021 and 2025, respectively. He is currently a Postdoctoral Fellow with the School of Computing and Data Science, The University of Hong Kong. His research interests include bilevel optimization, machine learning, and trustworthy artificial intelligence.
\end{IEEEbiography}

\begin{IEEEbiography}[{\includegraphics[width=1in,height=1.25in,clip,keepaspectratio]{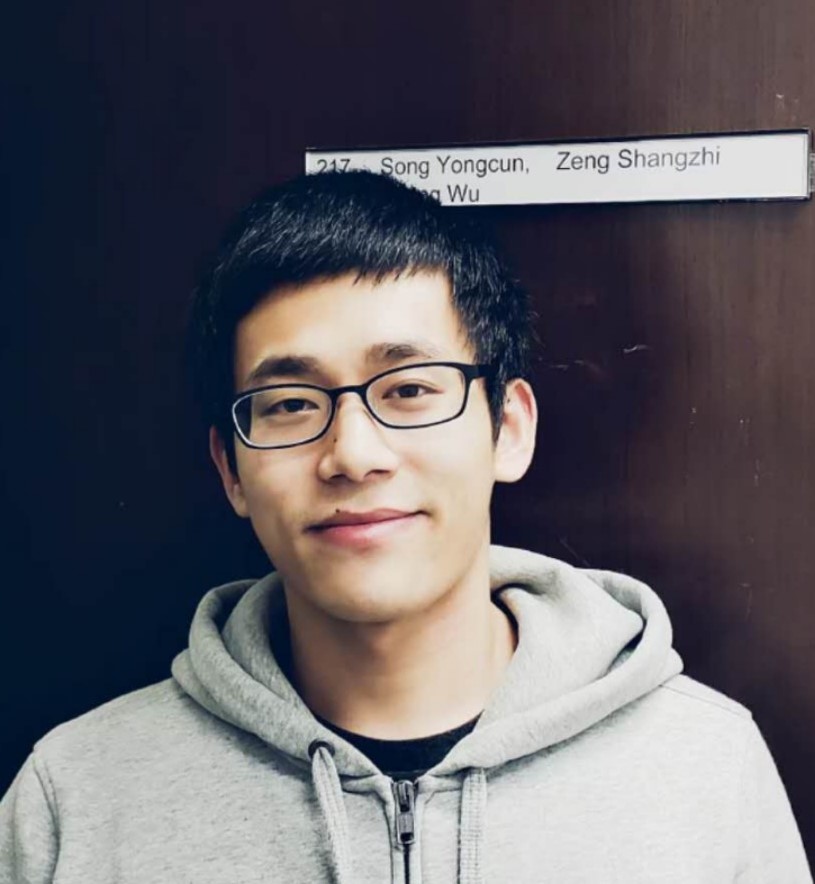}}]{Shangzhi Zeng} received the B.Sc. degree in Mathematics and Applied Mathematics from Wuhan University in 2015, the M.Phil. degree in Mathematics from Hong Kong Baptist University in 2018, and the Ph.D. degree in Mathematics from The University of Hong Kong in 2021. From 2021 to 2024, he was a Postdoctoral Fellow with the Department of Mathematics and Statistics, University of Victoria, Canada. He is currently an Associate Professor with the National Center for Applied Mathematics and the Department of Mathematics, Southern University of Science and Technology, Shenzhen, China. His research interests include optimization theory and methods, bilevel programming, and machine learning optimization algorithms.	

\end{IEEEbiography}

\begin{IEEEbiography}[{\includegraphics[width=1in,height=1.32in,clip,keepaspectratio]{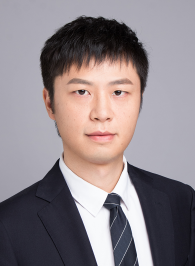}}]{Jin Zhang} received the B.A. degree in Journalism from the Dalian University of Technology in 2007. He received the M.S. degree in mathematics from the Dalian University of Technology, China, in 2010, and the PhD degree in Applied Mathematics from University of Victoria, Canada, in 2015. He worked as a Research Assistant Professor at Hong Kong Baptist University from 2015 to 2019. He is currently a Professor with the Department of Mathematics and the National Center for Applied Mathematics, Southern University of Science and Technology, Shenzhen, China. His research interests include optimization, variational analysis and their applications in economics, engineering and data science.
\end{IEEEbiography}

% simpler here.)
%\begin{IEEEbiography}[{\includegraphics[width=1in,height=1.25in,clip,keepaspectratio]{mshell}}]{Michael Shell}
% or if you just want to reserve a space for a photo:

%\begin{IEEEbiography}{Michael Shell}
%	Biography text here.
%\end{IEEEbiography}
%
%% if you will not have a photo at all:
%\begin{IEEEbiographynophoto}{John Doe}
%	Biography text here.
%\end{IEEEbiographynophoto}
%
%% insert where needed to balance the two columns on the last page with
%% biographies
%%\newpage
%
%\begin{IEEEbiographynophoto}{Jane Doe}
%	Biography text here.
%\end{IEEEbiographynophoto}
		
		% You can push biographies down or up by placing
		% a \vfill before or after them. The appropriate
		% use of \vfill depends on what kind of text is
		% on the last page and whether or not the columns
		% are being equalized.
		
		%\vfill
		
		% Can be used to pull up biographies so that the bottom of the last one
		% is flush with the other column.
		%\enlargethispage{-5in}

		% that's all folks
	\end{document}